\documentclass[11pt,reqno]{amsart}
\usepackage{CJK,CJKpunct}
\usepackage{subeqnarray}
\usepackage{cases}
\usepackage{stmaryrd}
\usepackage{color}
\usepackage{amsfonts}
\usepackage{mathrsfs}
\usepackage{amssymb,amsmath,amsthm}
\usepackage{array}
\usepackage{booktabs}
\usepackage{epsfig}
\usepackage{graphicx}
\usepackage{appendix}
\usepackage{lineno}
\usepackage{enumerate}
\usepackage{comment}
\usepackage{caption}
\usepackage[numbers,sort&compress]{natbib}
\setlength{\textwidth}{162mm} \setlength{\textheight}{22cm}
\setlength{\headheight}{1cm} \setlength{\topmargin}{-0.4cm}
\setlength{\oddsidemargin}{0cm} \setlength{\evensidemargin}{0cm}
\setlength{\parskip}{1mm} \setlength{\unitlength}{1mm}

\newcommand{\dps}{\displaystyle}
\newtheorem{theorem}{\indent Theorem}[section]

\newtheorem{remark}{\indent Remark}[section]

\newtheorem{example}{\indent Example}[section]
\newcommand{\ba}{\begin{array}}\newcommand{\ea}{\end{array}}
\newcommand{\be}{\begin{eqnarray}}\newcommand{\ee}{\end{eqnarray}}
\newcommand{\beq}{\begin{equation*}}\newcommand{\eeq}{\end{equation*}}
\newcommand{\bex}{\begin{eqnarray*}}
\newcommand{\eex}{\end{eqnarray*}}
\newcommand{\tabincell}[2]{\begin{tabular}{@{}#1@{}}#2\end{tabular}}
\newcommand{\PreserveBackslash}[1]{\let\temp=\\#1\let\\=\temp}
\newcolumntype{C}[1]{>{\PreserveBackslash\centering}p{#1}}
\newcolumntype{R}[1]{>{\PreserveBackslash\raggedleft}p{#1}}
\newcolumntype{L}[1]{>{\PreserveBackslash\raggedright}p{#1}}

\def\bq{\begin{equation}}
\def\eq{\end{equation}}
\def\beq{\begin{equation*}}
\def\eeq{\end{equation*}}
\def\br{\begin{eqnarray}}
\def\er{\end{eqnarray}}
\def\brr{\bq\begin{array}{r@{}l}}
\def\err{\end{array}\eq}
\def\bry{\beq\begin{array}{r@{}l}}
\def\ery{\end{array}\eeq}

\font\tenbi=cmmib10   at 11 pt
\font\sevenbi=cmmib10 at 9pt
\font\fivebi=cmmib7 at 6pt
\newfam\bifam
\textfont\bifam=\tenbi \scriptfont\bifam=\sevenbi  \scriptscriptfont\bifam=\fivebi

\def\bi{\fam\bifam\tenbi}
\font\sixtdb=msbm10 at 16 pt \font\tendb=msbm10 at 12 pt  \font\sevendb=msbm7
\newfam\dbfam
\textfont\dbfam=\sixtdb

\textfont\dbfam=\tendb \scriptfont\dbfam=\sevendb



\def\Dt {\tau}

\def\x{{\bi x}}



\title[Efficient Schemes for Fractional Allen-Cahn Eq]
{Highly efficient and energy dissipative schemes for the time fractional Allen-Cahn equation$^*$}
\author[Dianming Hou and Chuanju Xu]
{Dianming Hou$^{1}$
\quad
Chuanju Xu$^{2,3}$}
\thanks{\hskip -12pt
${}^*${\color{black} The work of D. Hou is supported by NSFC grant 12001248 and the Natural Science Foundation of the Jiangsu Higher Education Institutions of China grant
BK20201020.
The second author has received support from NSFC grant 11971408, NNW2018-ZT4A06 project,
and NSFC/ANR joint program 51661135011/ANR-16-CE40-0026-01.}
\\
$^{1}$School of Mathematics and Statistics, Jiangsu Normal
University, 221116 Xuzhou, China.\\
${}^{2}$School of Mathematical Sciences and
Fujian Provincial Key Laboratory of Mathematical Modeling and High Performance
Scientific Computing, Xiamen
University, 361005 Xiamen, China.\\
${}^{3}$Corresponding author. Email: cjxu@xmu.edu.cn (C. Xu)}

\keywords {time fractional Allen-Cahn, time-stepping scheme, unconditional stability, spectral method}
\subjclass{65N35, 65M70, 45K05, 41A05, 41A10, 41A25}
\begin{document}
\graphicspath{{figures/},}

\date {\today}
\maketitle

\begin{abstract}
In this paper, we propose and analyze a time-stepping method
for the time fractional Allen-Cahn equation. The key property of the
proposed method is its unconditional stability for general meshes,
including the graded mesh commonly used for this type of equations.
The unconditional stability is proved through establishing
a discrete nonlocal free energy dispassion law,
which is also true for the continuous problem. The main idea used in the analysis is
to split the time fractional derivative into two parts: a local part and a history part, which are
discretized by the well known L1, L1-CN, and $L1^{+}$-CN schemes.
Then an extended auxiliary variable approach is used to deal with the nonlinear and
history term.
The main contributions of the paper are: First,
it is found that the time fractional Allen-Chan equation is a dissipative system related to a nonlocal free energy.
Second, we construct efficient time stepping schemes satisfying the same dissipation law at the discrete level.
In particular, we prove that the proposed
schemes are unconditionally stable for quite general meshes. Finally,
the efficiency of the proposed method is verified by a series of numerical experiments.
\end{abstract}

\section{Introduction}
\setcounter{equation}{0}

As a class of mathematical models,
gradient flows is partial differential equations under the form:
\be\label{prob0}
\dps\frac{\partial \phi}{\partial t}=-\mbox{grad}_{H} E(\phi),
\ee
where $\phi$ is the state function (also called phase function in many cases),
$E(\cdot)$ is the free energy driving functional associated to the physical problem, and
$\mbox{grad}_{H} E(\cdot)$ is the functional derivative of $E$ in the Sobolev space $H$.
It has other names: it is often called variational principle in mathematics and
Onsager principle in physics.
Obviously the gradient flows satisfies the energy dissipation law:
 \bq\label{EDlaw}
 \frac{d}{dt}E(\phi)
 =\Big(\mbox{grad}_{H} E(\phi), \frac{\partial \phi}{\partial t}\Big)
 =-\|\frac{\partial \phi}{\partial t}\|_{0}^{2},
 \eq
 where $(\cdot,\cdot)$ and $\|\cdot\|_{0}$ stand for the $L^2(\Omega)$-inner product and norm, respectivly.
This means that the state function $\phi$ evolves in such a way that the energy functional $E$ dissipates in time, i.e., in the opposite direction to the gradient of
$E$ at $\phi$.
This makes the models very useful in many fields of science and engineering, such as
interface dynamics \cite{And97,Chen98,Gur96,Yue04},  thin films \cite{GO01,OF98},
crystal growth \cite{EKHG02,EG04,EPB07}, polymers \cite{Fr93,Fr03} and liquid crystals \cite{Doi86,Lar90,Lar91,Les79}.

In this paper, we are interested in the following model:
\be\label{FGF}
_{0}{}\!D^{\alpha}_{t}\phi=-\mbox{grad}_{H} E(\phi),
\ee
deriving from gradient flows having a modified dissipation mechanism.
Here $0<\alpha<1$, $_{0}{}\!D^{\alpha}_{t}$ is the Caputo fractional derivative defined by
\bex
_{0}{}\!D^{\alpha}_{t}\phi(t)=\frac{1}{\Gamma(1-\alpha)}\int_{0}^{t}(t-s)^{-\alpha}\phi'(s)ds.
\eex
In mathematics gradient flows involving fractional derivatives have been extensively studied
in recent years; see, .e.g, \cite{CZCWZ18,ZLWW14,DJLQ18,AM17,AIM17,ASS16,CZW18,LCWZ18,SXK16,LWY17,JLGZ19}.
From the definition it is seen that the fractional derivative is some kind of weighted average
in the history of the traditional derivative. This means that the change rate, i.e., the derivative, at
the current time is affected by the historical rates. In a larger field
this property has been found quite useful in describing the
memory effect which can be present, for example,
in some materials such as viscoelastic materials or polymers.
Intuitively, the gradient
flows model \eqref{FGF} can be used to describe the systems in which dissipation
of the associated free energy has memory effect in some circumstances.
One of typical examples of such models is the fractional Allen-Cahn equation, which is also
the focus of this paper. There exist a number of studies for this equation.
Tang et al. \cite{TYZ18} proved
that the time-fractional phase field model admits an integral type energy dissipation law.
They investigated the L1 time stepping scheme on the uniform mesh,
which is of first order energy stable accuracy.
Du et al. \cite{DYZ19} developed several time schemes based on the convex
splitting and weighted stabilization, and
proved that the convergence rates of their schemes are of order-$\alpha$ in the uniform mesh without regularity assumption on the solution.
Recently, Liao et al. \cite{JLGZ20} proposed an adaptive second-order Crank-Nicolson time-stepping scheme using SAV approach for the
time-fractional MBE model, and showed that
the proposed scheme are unconditional stable on the nonuniform mesh.
{\color{black}Very recently, Quan et al. \cite{QTY20_1} theoretically proved the time-fractional energy law and the weighted dissipation law. Accordingly,
they constructed a first order numerical method on the uniform time mesh \cite{QTY20_2},
which preserved the energy laws. However, it seems not easy to construct higher order schemes
for nonuniform meshes satisfying the same energy laws.}

The aim of the present paper is to propose easy-to-implement and
unconditionally stable schemes, which
preserve a non-local energy dissipation law to be specified.
The main idea in constructing the schemes is to use
existing efficient approximations to discretize the local part and history part of the
fractional derivative respectively, and use auxiliary variable
approaches \cite{Shen17_1,Shen17_2,HAX19} to deal with the nonlinear potential
in the free energy. The contributions of the paper are threefold:
\begin{itemize}

\item Finding of a non-local energy dissipation law of the time fractional gradient flows.

\item Construction of several unconditionally stable schemes for the time fractional Allen-Cahn equation,
which satisfy a discrete version of the energy dissipation law.
It is proved that the stability and energy dissipation law
remain true on the graded mesh, which is useful in recovering the optimal convergence order
for typical solutions having low regularity at the initial time.

\item The proposed schemes are very easy to implement. That is,
only several Poisson-type equations with constant coefficients need to be solved at each time step.
Furthermore, a fast evaluation technique
based on the sum-of-exponentials approach is used to accelerate the calculation and reduce the storage.
\end{itemize}

The paper is organized as follows: In the next section, we derive the non-local energy dissipation law for the time fractional gradient flows.
In Section 3, we construct and analyze the first order numerical scheme.
A discrete energy dissipation law of the scheme is established for general time girds.
In Section 4, we propose and analyze two higher order schemes:
a $2-\alpha$ order and a second order schemes based on Crank-Nicolson formula.
The unconditional stability of the both schemes are rigorously proved.
The numerical experiments are carried out in Section 5, not only to validate stability and accuracy of the proposed methods, but also to
numerically investigate the coarsening dynamics.
Finally, the paper ends with some concluding remarks.

\section {Non-local energy dissipation law}\label{sect2_0}
\setcounter{equation}{0}

We consider the time fractional gradient flows \eqref{FGF} in the bounded domain $\Omega\in \mathbb{R}^{n}~(n=1,2,3)$. When $\alpha=1$, it follows from
integrating \eqref{EDlaw} from $t_e$ to $t_l$ for any $0\leq t_e < t_l$ that:
\bex
 E(\phi(t_l))-E(\phi(t_e))
 =-\int_{t_e}^{t_l}\Big\|\frac{\partial\phi(\cdot,s)}{\partial s}\Big\|_{0}^{2}ds\leq0.
 \eex
That is, the free energy $E(\cdot)$ is decreasing in $t$. However, for $0<\alpha<1$, this energy law does not
hold no longer. Instead, the solution of \eqref{FGF} satisfies an non-local energy law that we derive
below. To see that, we split the fractional derivative into two parts as follows:
\be\label{split}
_{0}{}\!D^{\alpha}_{t}\phi(t) =D^{\alpha,\hat{t}}_{l,t}\phi+D^{\alpha,\hat{t}}_{h,t}\phi,
\ee
where $0<\hat{t}<t$, and the local term $D^{\alpha,\hat{t}}_{l,t}\phi$ and the history term
$D^{\alpha,\hat{t}}_{h,t}\phi$ are respectively defined by
\bex
D^{\alpha,\hat{t}}_{l,t}\phi
=
\frac{1}{\Gamma(1-\alpha)}\int_{\hat{t}}^{t}(t-s)^{-\alpha}\phi'(s)ds,
\quad\quad
D^{\alpha,\hat{t}}_{h,t}\phi
=
\frac{1}{\Gamma(1-\alpha)}\int_{0}^{\hat{t}}(t-s)^{-\alpha}\phi'(s)ds.
\eex
Now we introduce the non-local  {\color{black}``energy"}
\bq\label{ener2}
\overline{E}(\hat{t},t; \phi):=E(\phi)+F_h(\hat{t},t; \phi),\ \ 0<\hat{t}<t,
\eq
where $F_h(\hat{t},t; \phi)$ is the non-local part of the energy, defined by
\bex
F_h(\hat{t},t; \phi):=\int_{\Omega}\int_{\hat{t}}^{t}D^{\alpha,\hat{t}}_{h,s}\phi\frac{\partial\phi}{\partial s}dsd\x,\ \ 0<\hat{t}<t.
\eex
A direct calculation shows
\bex
\dps\frac{d}{dt}\overline{E}(\hat{t},t; \phi)
=\frac{d}{dt}{E}(\phi) + \Big(D^{\alpha,\hat{t}}_{h,t}\phi,\frac{\partial\phi}{\partial t}\Big)
=\Big(\mbox{grad}_{H} E(\phi), \frac{\partial\phi}{\partial t}\Big)
+
\Big(D^{\alpha,\hat{t}}_{h,t}\phi,\frac{\partial\phi}{\partial t}\Big),\ \ 0<\hat{t}<t.
\eex
Then using \eqref{FGF} and \eqref{split} gives
\be\label{te1}
\dps\frac{d}{dt}\overline{E}(\hat{t},t; \phi)
=(- _{0}{}\!D^{\alpha}_{t}\phi(t), \frac{\partial\phi}{\partial t}\Big)
+
\Big(D^{\alpha,\hat{t}}_{h,t}\phi,\frac{\partial\phi}{\partial t}\Big)
=
-\Big(D^{\alpha,\hat{t}}_{l,t}\phi,\frac{\partial\phi}{\partial t}\Big),\ \ 0<\hat{t}<t.
\ee
This allows to establish the following {\color{black}``energy"} decay property:
for $0<\hat{t}<t$, integrating \eqref{te1} yields
\be\label{te2}
\overline{E}(\hat{t},t;\phi(t)) - \overline{E}(\hat{t},\hat{t};\phi(\hat{t}))
= - \int_\Omega\int^t_{\hat{t}} D^{\alpha,\hat{t}}_{l,s}\phi \frac{\partial\phi}{\partial s}dsd\x.
\ee
Let's define the bilinear form $\dps\mathcal{A}_{\alpha}^{\hat{t}, t}(\cdot,\cdot)$ with $0<\hat{t}<t$ by:
for the functions $\varphi$ and $\psi$,
\bex
\mathcal{A}_{\alpha}^{\hat{t}, t}(\varphi, \psi)
:=\frac{1}{\Gamma(1-\alpha)}\int_{\hat{t}}^{t}\int_{\hat{t}}^{s}(s-\sigma)^{-\alpha}\varphi(\sigma)\psi(s)d\sigma ds.
\eex
It has been known; see, e.g., \cite{TYZ18,HZX20}, that the bilinear form
$\dps\mathcal{A}_{\alpha}^{\hat{t}, t}(\cdot,\cdot)$ is positive for any $0<\hat{t}<t$. That is, for any
$\psi\in L^{2}(\hat{t},t)$ so that the following expression makes sense, it holds:
\be\label{coer}
\mathcal{A}_{\alpha}^{\hat{t}, t}(\psi, \psi) \ge 0.
\ee
It then follows from \eqref{te2} and \eqref{coer} that
\bex
\overline{E}(\hat{t},t;\phi(t)) - \overline{E}(\hat{t},\hat{t};\phi(\hat{t}))
= - \int_\Omega \mathcal{A}_{\alpha}^{\hat{t}, t}(\partial_t\phi, \partial_t\phi) d\x
\le 0.
\eex
This can be regarded as an energy law associated to the {\color{black}fucntional} $\overline{E}(\hat{t},t; \phi)$ defined
in \eqref{ener2}. However, the inconvenience in using $\overline{E}(\hat{t},t; \phi)$ is that it depends
on $\hat{t}$, which makes the fucntional discontinuous in $t$.
For the numerical purpose it is desirable to derive a continuous-in-time energy, which is dissipative
in a given time grid, so that we have a clear goal to construct our numerical scheme satisfying the
same dissipation law.
To this end, for a given time mesh, say $0=t_{0}<t_{1}<t_{2}<\cdots<t_{M}=T$, we
define a new non-local  {\color{black}``energy"} functional as follows:
\bq\label{energ_eps}
\overline{E}(\phi)=\!\!\!
\begin{array}{l}
\begin{cases}
E(\phi),\  t\in[t_{0},t_{1}],\\[9pt]
E(\phi)+F_h(t_1,t; \phi),\ t\in[t_{1},t_{2}],\\[9pt]
\dps E(\phi)+F_h(t_n,t; \phi)+\sum_{k=1}^{n-1}F_h(t_{k},t_{k+1}; \phi),\
t\in[t_{n},t_{n+1}], n=2,\cdots, M-1.
\end{cases}
\end{array}
\eq
It is readily seen that
\bex
\dps\frac{d}{dt} \overline{E}(\phi)
=
- \Big(D^{\alpha,t_n}_{l,t}\phi, \frac{\partial\phi}{\partial t}\Big), \ \ t\in[t_{n},t_{n+1}], n=0,1, \cdots, M-1.
\eex
Integrating the above equality in the interval $[t_{n},t_{n+1}], n=0,1, \cdots, M-1$ gives
\bq\label{AClaw}
\overline{E}(\phi(t_{n+1}))-\overline{E}(\phi(t_{n}))=-\int_{\Omega}\mathcal{A}_\alpha^{t_n,t_{n+1}}(\phi_{t},\phi_{t})d\x\leq 0, \mbox{ for all } n=0,1,\cdots M-1.
\eq
We see that for any given time mesh $\{t_n\}_{n=0}^{M}$, the corresponding non-local free energy, defined in
\eqref{energ_eps} is dissipative at the grid points.
 {\color{black} We would like to point out that the functionals defined in \eqref{ener2} and \eqref{energ_eps} do not necessarily have any physical meaning.
The motivation for introducing such a modified ``energy" is purely mathematical.
That is, we want to find suitable functionals related to the equation, which decay in time.
This provides insight into how a stable scheme should look like. }
Our aim in the next section is to construct numerical schemes that satisfy the same dissipation law.

\section {Numerical methods --- a first order scheme}\label{sect2a}
\setcounter{equation}{0}

To simplify the presentation, we will only consider the time fractional Allen-Cahn equation, i.e.,
\be\label{FAC}
_{0}{}\!D^{\alpha}_{t}\phi=-\mbox{grad}_{H} E(\phi),
\ee
subject to
the periodic boundary condition or Neumann boundary condition,
where the free energy $E(\cdot)$ is defined
by
\bq\label{E}
E(\phi):=\dps\int_{\Omega}\Big[\frac{\varepsilon^{2}}{2}|\nabla\phi|^{2}+F(\phi)\Big]d\x,
\eq
and $F(\cdot)$ is a nonlinear potential. $H:=L^{2}(\Omega)$.

{\bf Auxiliary variable approach.} The proposed schemes are based on a reformulation of the time fractional Allen-Cahn equation
by introducing an auxiliary variable --- an approach intensively studied recently for gradient
flows; see, e.g., \cite{shen2018scalar,HAX19} and the references therein.
The key to is to rewrite the original equation \eqref{FAC}-\eqref{E}
into the following equivalent form:
\be\label{re_prob0}
_{0}{}\!D^{\alpha}_{t}\phi-\varepsilon^{2}\Delta\phi+\Big(1-\frac{R(t)}{R(t)}\Big)
\theta^{2}\Delta\phi+\frac{R(t)}{R(t)} F'(\phi)=0,
\ee
where
\be\label{R}
R(t)=\sqrt{\overline{E}_{\theta}(\phi)+C_{0}}, \ \ \ \theta^{2}\leq\varepsilon^{2},
\ee
and, for the time grid $\{t_n\}_0^M$, $\overline{E}_{\theta}(\phi)$ is defined by
\beq
\overline{E}_{\theta}(\phi)=\!\!\!
\begin{array}{l}
\begin{cases}
E_{\theta}(\phi),\  t\in[t_{0},t_{1}],\\[9pt]
E_{\theta}(\phi)+F_h(t_1,t; \phi),\ t\in[t_{1},t_{2}],\\[9pt]
\dps E_{\theta}(\phi)+F_h(t_n,t; \phi)+\sum_{k=1}^{n-1}F_h(t_{k},t_{k+1}; \phi),\
t\in[t_{n},t_{n+1}], n=2,\cdots, M-1,
\end{cases}
\end{array}
\eeq
with $E_{\theta}(\phi):=\int_{\Omega}[\frac{\theta^{2}}{2}|\nabla\phi|^{2}+F(\phi)]d\x$, and
$C_{0}$ being a constant such that $\overline{E}_{\theta}(\phi)+C_{0}>0$.

To find a suitable way to discretize the auxiliary variable $R(t)$,
we take the derivative of \eqref{R} with respect to $t$ to obtain the auxiliary equation:
\be\label{re_prob2}
 \dps\frac{d R}{d t}=\frac{1}{2R(t)}\Big(-\theta^{2}\Delta\phi+F'(\phi)+D^{\alpha,t_n}_{h,t}\phi,
\frac{\partial \phi}{\partial t}\Big),\ \  \forall t\in [t_n, t_{n+1}], n=2,\cdots, M-1.
\ee
Furthermore, we use the operator splitting \eqref{split} to rewrite the equation \eqref{re_prob0}
under the equivalent form: for $n=2,\cdots, M-1$,
\be\label{re_prob}
D^{\alpha, t_n}_{l,t}\phi-\varepsilon^{2}\Delta\phi+\Big(1-\frac{R(t)}{R(t)}\Big)
\theta^{2}\Delta\phi+\frac{R(t)}{R(t)} (F'(\phi) + D^{\alpha, t_n}_{h,t}\phi)
=0,\ \  \forall t\in [t_n, t_{n+1}].
\ee
Now we are led to discretize the equations \eqref{re_prob} and \eqref{re_prob2}.
The great advantage of this approach is that, although we have one more variable and one more equation to discretize compared to the original equation, constructing stable and efficient
schemes with help of the auxiliary variable turns out to be a much easier task.

Before describing our schemes, we first realize, by
taking $L^2(\Omega)$-inner products and integrating from $t_{n}$ to $t_{n+1}$ of \eqref{re_prob} and \eqref{re_prob2} with $\frac{\partial \phi}{\partial t}$
 and $2R(t)$ respectively, that
 \brr\label{law2}
\dps\Big[R^{2}(t_{n+1})+\frac{\varepsilon^{2}-\theta^{2}}{2}\|\nabla\phi(\cdot,t_{n+1})\|^{2}_{0}\Big]-\Big[R^{2}(t_{n})+\frac{\varepsilon^{2}-\theta^{2}}{2}\|\nabla\phi(\cdot,t_{n})\|^{2}_{0}\Big]&\dps=-\int_{\Omega}\mathcal{A}_\alpha^{t_n,t_{n+1}}(\phi_{t},\phi_{t})d\x\\
&\leq 0.
 \err
Noticing
 $$\dps R^{2}+\frac{\varepsilon^{2}-\theta^{2}}{2}\|\nabla\phi\|^{2}_{0}
 =\overline{E}(\phi)+C_{0},
 $$
with $\overline{E}(\cdot)$ being the non-local energy functional
defined in \eqref{energ_eps},
we have
\be\label{law2b}
\overline{E}(\phi(t_{n+1})) - \overline{E}(\phi(t_{n})) \leq 0.
\ee
This is exactly the same dissipation law as \eqref{AClaw}, derived from the original equation
without the auxiliary variable.
We will see that after discretization, the discrete solution satisfies a discrete dissipation law
under the form \eqref{law2} rather than \eqref{law2b}.

Starting with the equivalent equations \eqref{re_prob2} and \eqref{re_prob}, we are now
in a position to construct various efficient time stepping schemes to
calculate the solution $\phi$.
\medskip

 {\bf A first order scheme.}
 Let $\tau_n=t_n-t_{n-1}, n=1,\cdots,M$ be the time step size,
 and $\tau=\max\{\Dt_{n},n=1,\cdots,M\}$ be the maximum step size.

 We propose to use the popular L1 approximation \cite{LX07} to discretize
 the local and history parts of the Caputo fractional derivative at $t=t_{n+1}$:
  \bry
 D^{\alpha,t_{n}}_{l,t_{n+1}}\phi
  =&\dps\frac{1}{\Gamma(1-\alpha)}\int_{t_{n}}^{t_{n+1}}(t_{n+1}-s)^{-\alpha}\phi_{s}(s)ds
  \\[9pt]
  =&\dps\frac{1}{\Gamma(1-\alpha)}\frac{\phi(t_{n+1})-\phi(t_{n})}{\Dt_{n+1}}
  \int_{t_{n}}^{t_{n+1}}(t_{n+1}-s)^{-\alpha}ds + e^{n+1}_{l,\tau}
  \\[12pt]
  := &L^\alpha_l\phi(t_{n+1})+e^{n+1}_{l,\tau},
  \ery
  \bry
 D^{\alpha,t_{n}}_{h,t_{n+1}}\phi
  =&\dps\frac{1}{\Gamma(1-\alpha)}\sum_{k=0}^{n-1}\int_{t_k}^{t_{k+1}}(t_{n+1}-s)^{-\alpha}\phi_{s}(s)ds\\[9pt]
  =&\dps\sum_{k=0}^{n-1}\frac{1}{\Gamma(1-\alpha)}\frac{\phi(t_{k+1})-\phi(t_{k})}{\Dt_{k+1}}\int_{t_{k}}^{t_{k+1}}(t_{n+1}-s)^{-\alpha}ds + e^{n+1}_{h,\tau}\\[12pt]
 :=& L^\alpha_h\phi(t_{n+1})+e^{n+1}_{h,\tau},
  \ery
where the discrete fractional operators $L^{\alpha}_{l}$ and $L^{\alpha}_{h}$ are
defined respectively by
\bex
&& L^\alpha_l\phi(t_{n+1})
 = b_{0}\frac{\phi(t_{n+1})-\phi(t_{n})}{\Dt_{n+1}},\\
&& L^\alpha_h\phi(t_{n+1})
=\sum_{k=0}^{n-1}b_{n-k}\frac{\phi(t_{k+1})-\phi(t_{k})}{\Dt_{k+1}},
\eex
and the coefficients $b_{k}$ are given by
\bex
&&  b_{n-k}=\frac{1}{\Gamma(1-\alpha)}\int_{t_{k}}^{t_{k+1}}(t_{n+1}-s)^{-\alpha}ds>0, k=0,1,\cdots,n.
\eex
The truncation errors $e^{n+1}_{l,\tau}$ and $e^{n+1}_{h,\tau}$ are defined respectively by
\bex
e^{n+1}_{l,\tau}
= \frac{1}{\Gamma(1-\alpha)}\Big[\int_{t_{n}}^{t_{n+1}}(t_{n+1}-s)^{-\alpha}\phi_{s}(s)ds
- \dps\frac{\phi(t_{n+1})-\phi(t_{n})}{\Dt_{n+1}}
\int_{t_{n}}^{t_{n+1}}(t_{n+1}-s)^{-\alpha}ds\Big],
\eex
and
\bex
e^{n+1}_{h,\tau}
= \frac{1}{\Gamma(1-\alpha)}\Big[\int_{0}^{t_{n}}(t_{n+1}-s)^{-\alpha}\phi_{s}(s)ds
- \dps\sum_{k=0}^{n-1}\frac{\phi(t_{k+1})-\phi(t_{k})}{\Dt_{k+1}}
\int_{t_{k}}^{t_{k+1}}(t_{n+1}-s)^{-\alpha}ds\Big].
\eex
For the graded mesh, i.e., $t_{n}=\big({n\over M}\big)^rT, r\geq1, n=0,1,\cdots,M$,
which is particularly interesting for this problem and also the focus of this paper,
a direct calculation gives
\beq
b_j=\frac{T^{1-\alpha}}{\Gamma(2-\alpha)M^{(1-\alpha)r}}\Big[\big((n+1)^{r}-(n-j)^{r}\big)^{1-\alpha}-\big((n+1)^{r}-(n-j+1)^{r}\big)^{1-\alpha}\Big].
\eeq
Noting that when $r=1$, it is the uniform mesh.

It can be proved \cite{LX07,HZX20} that
the truncation error $e^{n+1}_{l,\tau}$ and $e^{n+1}_{h,\tau}$ can be bounded by $c_{\phi}\tau^{2-\alpha}$, {\color{black}where $c_{\phi}$ is a positive constant depending on the regularity of $\phi$.} 

The above approximation motivates us to consider the following scheme for
\eqref{re_prob} and \eqref{re_prob2}:
 \begin{subequations}\label{sche1}
 \begin{align}
 &\dps L^\alpha_l\phi^{n+1}-\varepsilon^{2}\Delta\phi^{n+1}+\Big(1-\frac{R^{n+1}}{R^{n}}\Big)\theta^{2}\Delta\phi^{n}+\frac{R^{n+1}}{R^{n}}\big(F'(\phi^{n})+L^\alpha_h\phi^{n+1}\big)=0,\label{eq1_2}\\
 &\dps\frac{R^{n+1}-R^{n}}{\tau_{n+1}}=\frac{1}{2R^{n}}\Big(-\theta^{2}\Delta\phi^{n}+F'(\phi^{n})+L^\alpha_h\phi^{n+1}, \frac{\phi^{n+1}-\phi^{n}}{\tau_{n+1}}\Big),\label{eq1_3}
 \end{align}
 \end{subequations}
where $\phi^{n}$ is an approximation to $\phi(t_n)$.
Intuitively, this is a first order scheme since it is a combination of some approximations of
first order and $2-\alpha$ order to different terms of the equations.
However a rigorous proof of the convergence order
remains an open question. Instead, we will provide a proof for the stability of the scheme,
and the convergence order
will be verified through the numerical experiments to be presented later on.
\medskip

{\bf Stability.}
The stability property of the first order scheme \eqref{sche1} is presented and proved in the following theorem.
\begin{theorem}\label{th1}
Without any restriction on the mesh, the scheme \eqref{sche1} is stable in the sense that the following discrete energy dissipation law holds
\bex
\dps E^{n+1}_{\varepsilon,\theta}-E^{n}_{\varepsilon,\theta}
\leq0,  \ \ n=0,1,\cdots,
\eex
where
\beq
\dps E^{n}_{\varepsilon,\theta}:=\frac{1}{2}(\varepsilon^{2}-\theta^{2})\|\nabla\phi^{n}\|^{2}_{0}+|R^{n}|^{2},\ \ \theta^2\le \varepsilon^2.
 \eeq
\end{theorem}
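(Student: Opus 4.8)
The plan is to reproduce at the discrete level the continuous energy computation leading to \eqref{law2}: I would test the phase equation \eqref{eq1_2} against the increment $\phi^{n+1}-\phi^{n}$ in $L^2(\Omega)$ and combine the result with the auxiliary equation \eqref{eq1_3} scaled appropriately, arranging things so that the nonlinear potential $F'(\phi^{n})$ and the \emph{entire} history operator $L^\alpha_h\phi^{n+1}$ cancel algebraically.

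First I would take the $L^2(\Omega)$-inner product of \eqref{eq1_2} with $\phi^{n+1}-\phi^{n}$. Since $L^\alpha_l\phi^{n+1}=b_0(\phi^{n+1}-\phi^{n})/\tau_{n+1}$, the local fractional term contributes the manifestly nonnegative quantity $(b_0/\tau_{n+1})\|\phi^{n+1}-\phi^{n}\|_0^2$. Integrating by parts (the periodic or Neumann conditions annihilate the boundary terms), the term $-\varepsilon^2\Delta\phi^{n+1}$ yields $\varepsilon^2(\nabla\phi^{n+1},\nabla\phi^{n+1}-\nabla\phi^{n})$, which I split via the polarization identity $(a,a-b)=\tfrac12(\|a\|_0^2-\|b\|_0^2+\|a-b\|_0^2)$; the $\theta^2$-term is handled with the companion identity $(b,a-b)=\tfrac12(\|a\|_0^2-\|b\|_0^2-\|a-b\|_0^2)$. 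What remains is the combination carrying the factor $R^{n+1}/R^{n}$, namely $\tfrac{R^{n+1}}{R^{n}}\big[\theta^2(\nabla\phi^{n},\nabla(\phi^{n+1}-\phi^{n}))+(F'(\phi^{n})+L^\alpha_h\phi^{n+1},\phi^{n+1}-\phi^{n})\big]$.

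Next I would multiply \eqref{eq1_3} by $2R^{n+1}\tau_{n+1}$. The left side becomes $2R^{n+1}(R^{n+1}-R^{n})=(R^{n+1})^2-(R^{n})^2+(R^{n+1}-R^{n})^2$, and the $\tau_{n+1}$ cancels the $1/\tau_{n+1}$ inside the bracket, so the right side equals exactly the $R^{n+1}/R^{n}$-combination produced above (after integrating the $-\theta^2\Delta\phi^{n}$ term by parts). Substituting therefore removes both $F'(\phi^{n})$ and $L^\alpha_h\phi^{n+1}$ in one stroke, replacing them by the clean quadratic $(R^{n+1})^2-(R^{n})^2+(R^{n+1}-R^{n})^2$. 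Regrouping the gradient pieces via the two polarization identities collapses them to the telescoping term $\tfrac{\varepsilon^2-\theta^2}{2}(\|\nabla\phi^{n+1}\|_0^2-\|\nabla\phi^{n}\|_0^2)$ plus the nonnegative remainder $\tfrac{\varepsilon^2+\theta^2}{2}\|\nabla(\phi^{n+1}-\phi^{n})\|_0^2$, so that altogether I reach the exact energy identity
\[
E^{n+1}_{\varepsilon,\theta}-E^{n}_{\varepsilon,\theta}
= -\frac{b_0}{\tau_{n+1}}\|\phi^{n+1}-\phi^{n}\|_0^2
-\frac{\varepsilon^2+\theta^2}{2}\|\nabla(\phi^{n+1}-\phi^{n})\|_0^2
-(R^{n+1}-R^{n})^2 .
\]

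The right-hand side is nonpositive because $b_0>0$, $\tau_{n+1}>0$, and $\theta^2\le\varepsilon^2$ with $\varepsilon^2\ge0$, which yields the claimed dissipation law; the argument is uniform in $n$ (for $n=0$ the history sum $L^\alpha_h\phi^{1}$ is empty). I expect the only genuinely conceptual point — and precisely the mechanism behind unconditional stability on arbitrary meshes — to be the observation that, unlike the continuous derivation which relied on positivity of $\mathcal{A}_\alpha^{t_n,t_{n+1}}$ in \eqref{coer}, the discrete estimate needs no positive-definiteness of the full L1 history kernel whatsoever: the history is carried explicitly by the scalar auxiliary variable $R$ and is eliminated by the substitution, so only positivity of the single coefficient $b_0$ is required. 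Everything else is routine bookkeeping in matching the $\theta^2$ and $\varepsilon^2$ cross terms through the polarization identities.
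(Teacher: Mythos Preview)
Your proposal is correct and follows essentially the same route as the paper: test \eqref{eq1_2} against $\phi^{n+1}-\phi^{n}$, multiply \eqref{eq1_3} by $2R^{n+1}\tau_{n+1}$, and combine so that the $R^{n+1}/R^{n}$-weighted terms (carrying $F'(\phi^{n})$, $L^\alpha_h\phi^{n+1}$, and $-\theta^{2}\Delta\phi^{n}$) cancel. Your organization of the gradient terms via the two polarization identities is a cosmetic variant of the paper's decomposition $\varepsilon^{2}(\nabla\phi^{n+1},\cdot)-\theta^{2}(\nabla\phi^{n},\cdot)=(\varepsilon^{2}-\theta^{2})(\nabla\phi^{n+1},\cdot)+\theta^{2}\|\nabla(\phi^{n+1}-\phi^{n})\|_{0}^{2}$; both yield the same remainder $\tfrac{\varepsilon^{2}+\theta^{2}}{2}\|\nabla(\phi^{n+1}-\phi^{n})\|_{0}^{2}$, and your exact identity is simply the paper's inequality before the nonnegative terms are dropped.
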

\begin{proof}
First, taking the $L^2-$inner products of \eqref{eq1_2} and \eqref{eq1_3} with
$\phi^{n+1}-\phi^{n}$ and $2R^{n+1}$ respectively, we obtain:
\bry
 &\dps\big(L^\alpha_l\phi^{n+1},\phi^{n+1}-\phi^{n}\big)
 +\dps(\varepsilon^{2}-\theta^{2})\big(\nabla\phi^{n+1},\nabla(\phi^{n+1}-\phi^{n})\big)+\theta^{2}\|\nabla(\phi^{n+1}-\phi^{n})\|^{2}_{0}\\[9pt]
 &\hspace{5.3cm}\dps+\frac{R^{n+1}}{R^{n}}\big(-\theta^{2}\Delta\phi^{n}+F'(\phi^{n}+L^\alpha_h\phi^{n+1}),\phi^{n+1}-\phi^{n}\big)=0, \\[9pt]
& \dps2R^{n+1}(R^{n+1}-R^{n})
=\dps\frac{R^{n+1}}{R^{n}}\big(-\theta^{2}\Delta\phi^{n}+F'(\phi^{n})+L^\alpha_h\phi^{n+1},\phi^{n+1}-\phi^{n}\big).
\ery
Then combining these two equalities, using the identity
\bex
2a^{n+1}(a^{n+1}-a^n)=|a^{n+1}|^{2}-|a^{n}|^{2}+|a^{n+1}-a^{n}|^{2},
\eex
and dropping some non-essential positive terms, we obtain
\beq
\dps E^{n+1}_{\varepsilon,\theta}-E^{n}_{\varepsilon,\theta}
\leq -\big(L^\alpha_l\phi^{n+1},\phi^{n+1}-\phi^{n}\big)
=-\frac{b_0}{\Dt_{n+1}}\|\phi^{n+1}-\phi^{n}\|^{2}_{0}
\leq0,~~n=0,1,2,\cdots,
\eeq
This ends the proof.
\end{proof}

{\bf Implementation.}
Beside of its unconditional stability proved in Theorem \ref{th1},
another notable property of the scheme \eqref{sche1} is that it can be efficiently implemented.
To see that, we first eliminate $R^{n+1}$ from \eqref{eq1_2} by using \eqref{eq1_3} to obtain
\bq\label{te1b}
\dps b_{0}\frac{\phi^{n+1}-\phi^{n}}{\Dt_{n+1}}-\varepsilon^{2}\Delta\phi^{n+1}+\theta^{2} \Delta\phi^{n}
+\Big[1+\frac{1}{2|R^{n}|^2}(\gamma^{n},\phi^{n+1}-\phi^{n})\Big]\gamma^{n}=0,
\eq
where
\beq
\gamma^{n}:=-\theta^{2}\Delta\phi^{n}+F'(\phi^{n})+L^\alpha_h\phi^{n+1}.
\eeq
Then reformulating \eqref{te1b} gives
\be\label{eq3}
\dps\Big(\frac{b_{0}}{\Dt_{n+1}}I_d-\varepsilon^{2}\Delta\Big)\phi^{n+1}+(\gamma^{n},\phi^{n+1})\frac{\gamma^{n}}{2|R^{n}|^{2}}
=\frac{b_0}{\Dt_{n+1}}\phi^{n}-\theta^{2}\Delta\phi^{n}\!-\!\Big[\frac{R^{n}}{R^{n}}\!-\!\frac{1}{2|R^{n}|^{2}}(\gamma^{n},\phi^{n})\Big]\gamma^{n}.
\ee
Denoting the right hand side of \eqref{eq3} by $g(\phi^{n})$,
we see that the problem \eqref{eq3} can be solved in two steps
as follows:
\begin{subequations}\label{semi_D}
 \begin{align}
 &\begin{cases}
   \begin{array}{r@{}l}
    &\dps\big(\frac{b_{0}}{\Dt_{n+1}}I_d-\varepsilon^{2}\Delta\big)\phi^{n+1}_{1}=-\frac{\gamma^{n}}{2|R^{n}|^{2}},\\[9pt]
    &\dps\mbox{Neumann boundary condition or periodic boundary condition on $\phi^{n+1}_1$;}\\
    \end{array}
 \end{cases}\label{semi_D_1}\\
 &\begin{cases}
    \begin{array}{r@{}l}
      &\dps\big(\frac{b_{0}}{\Dt_{n+1}}I_d-\varepsilon^{2}\Delta\big)\phi^{n+1}_{2}=g(\phi^{n}),\\[9pt]
      &\dps\mbox{Neumann boundary condition or periodic boundary condition on $\phi^{n+1}_2$;}\\
    \end{array}
  \end{cases}\label{semi_D_2}\\
 &\dps \phi^{n+1}=(\gamma^{n},\phi^{n+1})\phi^{n+1}_{1}+\phi^{n+1}_{2}.\label{semi_D_3}
 \end{align}
 \end{subequations}
In a first look it seems that \eqref{semi_D_3} governing the unknown $\phi^{n+1}$ is an implicit equation.
However a careful examination shows that
$(\gamma^{n},\phi^{n+1})$ in \eqref{semi_D_3} can be determined explicitly.
In fact, taking the inner product of \eqref{semi_D_3} with $\gamma^{n}$ yields
\bq\label{eq4}
(\gamma^{n},\phi^{n+1})+ \sigma^{n}(\gamma^{n},\phi^{n+1})=(\gamma^{n},\phi^{n+1}_{2}),
\eq
where
\bex
\sigma^{n}=-(\gamma^{n},\phi^{n+1}_{1})
=\Big(\gamma^{n},A^{-1}\frac{\gamma^{n}}{2|R^{n}|^{2}}\Big)\ \ \mbox{with } A=\frac{b_{0}}{\Dt_{n+1}} I_d-\varepsilon^{2}\Delta.
\eex
Note that
$A^{-1}$ is a positive definite operator. Thus $\sigma^{n}\geq0$.
Then it follows from \eqref{eq4} that
\bq\label{eq5}
(\gamma^{n},\phi^{n+1})=\frac{(\gamma^{n},\phi^{n+1}_{2})}{1+\sigma^{n}}.
\eq
Using this expression, $\phi^{n+1}$ can be explicitly computed from \eqref{semi_D_3}.

In detail, the scheme \eqref{sche1} results in the following algorithm at each time step:

(i) Calculation of $\phi^{n+1}_{1}$ and $\phi^{n+1}_{2}$: solving the elliptic  problems
\eqref{semi_D_1} and \eqref{semi_D_2} respectively, which can be realized in parallel.

(ii) Evaluation of $(\gamma^{n},\phi^{n+1})$ using \eqref{eq5}, then $\phi^{n+1}$ using \eqref{semi_D_3}.

Thus the overall computational cost at each time step comes essentially from
solving two second-order elliptic problems with constant coefficients, for which there exist different fast solvers depending on
the spatial discretization method.

\section {Higher order schemes}\label{sect3}
\setcounter{equation}{0}

\subsection {A $2-\alpha$ order scheme}

Using L1-CN formula \cite{HZX20} to discrete both the local and history parts of the fractional derivative at $t=t_{n+\frac{1}{2}}:=\frac{t_{n}+t_{n+1}}{2}$ gives
  \bry
 D^{\alpha,t_{n}}_{l,t_{n+\frac{1}{2}}}\phi
  =&\dps\frac{1}{\Gamma(1-\alpha)}\int_{t_{n}}^{t_{n+\frac{1}{2}}}(t_{n+\frac{1}{2}}-s)^{-\alpha}\phi_{s}(s)ds\\[9pt]
  =&\dps\frac{1}{\Gamma(1-\alpha)}\frac{\phi(t_{n+1})-\phi(t_{n})}{\Dt_{n+1}}
  \int_{t_{n}}^{t_{n+\frac{1}{2}}}(t_{n+1}-s)^{-\alpha}ds + e^{n+\frac{1}{2}}_{l,\tau}\\[12pt]
   :=&\widetilde{L}^{\alpha}_{l}\phi(t_{n+\frac{1}{2}}) + e^{n+\frac{1}{2}}_{l,\tau},
  \ery
  \bry
 D^{\alpha,t_{n}}_{h,t_{n+\frac{1}{2}}}\phi
  =&\dps\frac{1}{\Gamma(1-\alpha)}\sum_{k=0}^{n-1} \int_{t_{k}}^{t_{k+1}}
  (t_{n+\frac{1}{2}}-s)^{-\alpha}\phi_{s}(s)ds\\[9pt]
  =&\dps\sum_{k=0}^{n-1}\frac{1}{\Gamma(1-\alpha)}\frac{\phi(t_{k+1})-\phi(t_{k})}{\Dt_{k+1}}
  \int_{t_{k}}^{t_{k+1}}(t_{n+\frac{1}{2}}-s)^{-\alpha}ds + e^{n+\frac{1}{2}}_{h,\tau}\\[12pt]
  :=&\dps \widetilde{L}^{\alpha}_{h}\phi(t_{n+\frac{1}{2}})+e^{n+\frac{1}{2}}_{h,\tau},
  \ery
 where the L1-CN difference operators
 $\widetilde{L}^{\alpha}_{l}$ and $\widetilde{L}^{\alpha}_{h}$ are defined
 respectively by
 \beq
  \widetilde{L}^\alpha_l\phi(t_{n+\frac{1}{2}})
 = \widetilde{b}_{0}\frac{\phi(t_{n+1})-\phi(t_{n})}{\Dt_{n+1}},\quad \widetilde{L}^\alpha_h\phi(t_{n+\frac{1}{2}})
=\sum_{k=0}^{n-1}\widetilde{b}_{n-k}\frac{\phi(t_{k+1})-\phi(t_{k})}{\Dt_{k+1}},
\eeq
with
\bex
 \widetilde{b}_{0}=\frac{\Dt_{n+1}^{1-\alpha}}{\Gamma(2-\alpha)2^{1-\alpha}}>0,~~~~~ \widetilde{b}_{n-k}=\frac{1}{\Gamma(1-\alpha)}\int_{t_{k}}^{t_{k+1}}(t_{n+\frac{1}{2}}-s)^{-\alpha}ds>0, k=0,1,\cdots,n-1.
\eex
The corresponding truncation errors $e^{n+1}_{l,\tau}$ and $e^{n+1}_{h,\tau}$ are defined respectively by
\bex
e^{n+\frac{1}{2}}_{\l,\tau}
= \frac{1}{\Gamma(1-\alpha)}\Big[\int_{t_{n}}^{t_{n+\frac{1}{2}}}(t_{n+\frac{1}{2}}-s)^{-\alpha}\phi_{s}(s)ds
- \dps\frac{\phi(t_{n+1})-\phi(t_{n})}{\Dt_{n+1}}\int_{t_{n}}^{t_{n+\frac{1}{2}}}(t_{n+\frac{1}{2}}-s)^{-\alpha}ds\Big],
\eex
and
\bex
e^{n+\frac{1}{2}}_{h,\tau}
= \frac{1}{\Gamma(1-\alpha)}\Big[\int_{0}^{t_{n}}(t_{n+\frac{1}{2}}-s)^{-\alpha}\phi_{s}(s)ds
- \dps\sum_{k=0}^{n-1}\frac{\phi(t_{k+1})-\phi(t_{k})}{\Dt_{k+1}}
\int_{t_{k}}^{t_{k+1}}(t_{n+\frac{1}{2}}-s)^{-\alpha}ds\Big].
\eex
It has been proved \cite{HZX20} that the truncation error $e^{n+1}_{l,\tau}$ and $e^{n+1}_{h,\tau}$ are both
of $2-\alpha$ order with respect to $\tau$.

Applying the difference operators $\widetilde{L}^{\alpha}_{l}$ and $\widetilde{L}^{\alpha}_{h}$ to discretize
the fractional derivative and
the Crank-Nicolson scheme to the remaining terms of the system \eqref{re_prob2} and \eqref{re_prob}, we obtain L1-CN scheme as follows:
 \begin{subequations}\label{L1_CN_s}
 \begin{align}
 &\dps \widetilde{L}^{\alpha}_{l}\phi^{n+\frac{1}{2}}-\varepsilon^{2}\frac{\Delta(\phi^{n+1}+\phi^{n})}{2}+\Big(1-\frac{R^{n+1}+R^{n}}{2R^{n+\frac{1}{2}}}\Big)\theta^{2}\Delta\phi^{n+\frac{1}{2}} \notag\\
 &\hspace{7cm}+\frac{R^{n+1}+R^{n}}{2R^{n+\frac{1}{2}}}\Big(F'(\phi^{n+\frac{1}{2}})+\widetilde{L}^{\alpha}_{h}\phi^{n+\frac{1}{2}}\Big),\label{L1_CN_1}\\
 &\dps\frac{R^{n+1}-R^{n}}{\tau_{n+1}}=\frac{1}{2R^{n+\frac{1}{2}}}
 \Big(-\theta^{2}\Delta\phi^{n+\frac{1}{2}}+F'(\phi^{n+\frac{1}{2}})+\widetilde{L}^{\alpha}_{h}\phi^{n+\frac{1}{2}},
\frac{\phi^{n+1}-\phi^{n}}{\tau_{n+1}}\Big),\label{L1_CN_2}
 \end{align}
 \end{subequations}
 where $\phi^{n+\frac{1}{2}}:=\phi^{n}+\frac{\tau_{n+1}}{2\Dt_{n}}[\phi^{n}-\phi^{n-1}]$ and  $R^{n+\frac{1}{2}}:=R^{n}+\frac{\tau_{n+1}}{2\Dt_{n}}[R^{n}-R^{n-1}]$ are the explicit approximation to $\phi(t_{n+\frac{1}{2}})$ and $R(t_{n+\frac{1}{2}}),$ respectively.

The scheme \eqref{L1_CN_s} is expected to have $2-\alpha$ order
convergence, since formally the approximation to the fractional derivative $_{0}{}\!D^{\alpha}_{t_{n+{1\over 2}}}\phi$ is of $2-\alpha$ order,
and the remaining approximations are based on the Crank-Nicolson formula, which is
second-order accurate.

The unconditional stability of the L1-CN scheme \eqref{L1_CN_s} is proved in the following theorem.
\begin{theorem}\label{th2}
The L1-CN scheme \eqref{L1_CN_s} satisfies the energy law:
\bex
&&\dps\frac{\varepsilon^{2}-\theta^{2}}{2}\|\nabla\phi^{n+1}\|^{2}_{0}+\frac{\theta^{2}}{4}\|\nabla(\phi^{n+1}-\phi^{n})\|_{0}^{2}+|R^{n+1}|^{2}\\[9pt]
&&\dps-\Big[\frac{\varepsilon^{2}-\theta^{2}}{2}\|\nabla\phi^{n}\|^{2}_{0}+\frac{\theta^{2}}{4}\Big(\frac{\Dt_{n+1}}{\Dt_{n}}\Big)^{2}\|\nabla(\phi^{n}-\phi^{n-1})\|_{0}^{2}+|R^{n}|^{2}\Big]
\leq0,~~n=0,1,\cdots.
\eex
Therefore, the scheme \eqref{L1_CN_s} is unconditionally stable
when i) the mesh is uniform, i.e., $\frac{\Dt_{n+1}}{\Dt_{n}}=1$;
ii) $\theta=0$.
In the former case, the discrete energy $\frac{\varepsilon^{2}-\theta^{2}}{2}\|\nabla\phi^{n+1}\|^{2}_{0}+\frac{\theta^{2}}{4}\|\nabla(\phi^{n+1}-\phi^{n})\|_{0}^{2}+|R^{n+1}|^{2}$
decreases during the time stepping,
while in the latter case, the energy $\widetilde{E}^{n+1}_{\varepsilon}:=\frac{\varepsilon^{2}}{2}\|\nabla\phi^{n+1}\|^{2}_{0}+|R^{n+1}|^{2}$ decays in time.
\end{theorem}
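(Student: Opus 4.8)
The plan is to reproduce the energy mechanism of the proof of Theorem~\ref{th1}, adapting the test functions to the Crank--Nicolson form. First I would take the $L^{2}(\Omega)$-inner product of \eqref{L1_CN_1} with $\phi^{n+1}-\phi^{n}$, and test \eqref{L1_CN_2} with $\Dt_{n+1}(R^{n+1}+R^{n})$, in place of the $2R^{n+1}$ used for the first-order scheme. The reason for choosing the factor $R^{n+1}+R^{n}$ is that the left-hand side of \eqref{L1_CN_2} then collapses to the exact telescoping difference $|R^{n+1}|^{2}-|R^{n}|^{2}$, with no leftover quadratic remainder, which is precisely what a Crank--Nicolson energy balance needs.

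The first simplification to exploit is that every term of \eqref{L1_CN_1} carrying the coefficient $\beta:=\frac{R^{n+1}+R^{n}}{2R^{n+\frac{1}{2}}}$ regroups into $\beta\big(-\theta^{2}\Delta\phi^{n+\frac{1}{2}}+F'(\phi^{n+\frac{1}{2}})+\widetilde{L}^{\alpha}_{h}\phi^{n+\frac{1}{2}},\,\phi^{n+1}-\phi^{n}\big)$, which by the tested form of \eqref{L1_CN_2} equals exactly $|R^{n+1}|^{2}-|R^{n}|^{2}$. Thus the nonlinear potential $F'$ and the entire history contribution $\widetilde{L}^{\alpha}_{h}\phi^{n+\frac{1}{2}}$ drop out of the balance. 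Using integration by parts (valid under the periodic or Neumann boundary conditions) and the elementary relation $(\widetilde{L}^{\alpha}_{l}\phi^{n+\frac{1}{2}},\phi^{n+1}-\phi^{n})=\frac{\widetilde{b}_{0}}{\Dt_{n+1}}\|\phi^{n+1}-\phi^{n}\|_{0}^{2}$, I would arrive at the exact identity
\beq
\frac{\widetilde{b}_{0}}{\Dt_{n+1}}\|\phi^{n+1}-\phi^{n}\|_{0}^{2}
+\frac{\varepsilon^{2}}{2}\big(\|\nabla\phi^{n+1}\|_{0}^{2}-\|\nabla\phi^{n}\|_{0}^{2}\big)
-\theta^{2}\big(\nabla\phi^{n+\frac{1}{2}},\nabla(\phi^{n+1}-\phi^{n})\big)
+|R^{n+1}|^{2}-|R^{n}|^{2}=0.
\eeq

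The only delicate point, and the main obstacle, is the $\theta^{2}$ cross term, which does not telescope cleanly because $\phi^{n+\frac{1}{2}}$ is the \emph{extrapolation} $\nabla\phi^{n+\frac{1}{2}}=(1+\lambda)\nabla\phi^{n}-\lambda\nabla\phi^{n-1}$, $\lambda:=\frac{\Dt_{n+1}}{2\Dt_{n}}$, and not the midpoint average. The key step is to peel off $\frac{\varepsilon^{2}}{2}=\frac{\varepsilon^{2}-\theta^{2}}{2}+\frac{\theta^{2}}{2}$ and then complete the square in the remaining $\theta^{2}$ terms. A short computation using the extrapolation identity gives
\beq
\frac{\theta^{2}}{2}\big(\|\nabla\phi^{n+1}\|_{0}^{2}-\|\nabla\phi^{n}\|_{0}^{2}\big)-\theta^{2}\big(\nabla\phi^{n+\frac{1}{2}},\nabla(\phi^{n+1}-\phi^{n})\big)
=\theta^{2}\Big[\tfrac{1}{2}\|\nabla(\phi^{n+1}-\phi^{n})\|_{0}^{2}-\lambda\big(\nabla(\phi^{n+1}-\phi^{n}),\nabla(\phi^{n}-\phi^{n-1})\big)\Big],
\eeq
after which adding $\frac{\theta^{2}}{4}\|\nabla(\phi^{n+1}-\phi^{n})\|_{0}^{2}-\theta^{2}\lambda^{2}\|\nabla(\phi^{n}-\phi^{n-1})\|_{0}^{2}$ to both sides reconstructs exactly the energy appearing in the statement (note $\theta^{2}\lambda^{2}=\frac{\theta^{2}}{4}(\Dt_{n+1}/\Dt_{n})^{2}$) and leaves on the right
\beq
-\frac{\widetilde{b}_{0}}{\Dt_{n+1}}\|\phi^{n+1}-\phi^{n}\|_{0}^{2}
-\theta^{2}\Big\|\tfrac{1}{2}\nabla(\phi^{n+1}-\phi^{n})-\lambda\nabla(\phi^{n}-\phi^{n-1})\Big\|_{0}^{2}\le 0.
\eeq
This establishes the asserted inequality for arbitrary meshes. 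The two special cases are then immediate: for a uniform mesh $\Dt_{n+1}/\Dt_{n}=1$, so the factor $(\Dt_{n+1}/\Dt_{n})^{2}=1$ makes $\frac{\varepsilon^{2}-\theta^{2}}{2}\|\nabla\phi^{n+1}\|_{0}^{2}+\frac{\theta^{2}}{4}\|\nabla(\phi^{n+1}-\phi^{n})\|_{0}^{2}+|R^{n+1}|^{2}$ a genuine step-consistent Lyapunov functional; and for $\theta=0$ the entire $\theta^{2}$ block vanishes, leaving $\widetilde{E}^{n+1}_{\varepsilon}=\frac{\varepsilon^{2}}{2}\|\nabla\phi^{n+1}\|_{0}^{2}+|R^{n+1}|^{2}$ monotonically decreasing.
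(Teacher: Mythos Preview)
Your proposal is correct and follows essentially the same route as the paper: test \eqref{L1_CN_1} with $\phi^{n+1}-\phi^{n}$ and \eqref{L1_CN_2} with $R^{n+1}+R^{n}$, cancel the $\beta$-terms, split $\varepsilon^{2}=(\varepsilon^{2}-\theta^{2})+\theta^{2}$, and treat the $\theta^{2}$ cross term by the polarization identity. The paper phrases that last step as applying $2a^{n+1}(a^{n+1}-a^{n})=|a^{n+1}|^{2}-|a^{n}|^{2}+|a^{n+1}-a^{n}|^{2}$ with $a^{n+1}=\nabla(\phi^{n+1}-\phi^{n})$ and $a^{n}=\frac{\Dt_{n+1}}{\Dt_{n}}\nabla(\phi^{n}-\phi^{n-1})$, while you complete the square explicitly; these are the same computation, and your ``dropped'' positive remainder $\theta^{2}\|\tfrac{1}{2}\nabla(\phi^{n+1}-\phi^{n})-\lambda\nabla(\phi^{n}-\phi^{n-1})\|_{0}^{2}$ is precisely the $|a^{n+1}-a^{n}|^{2}$ term the paper discards. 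One minor wording issue: the phrase ``adding $\frac{\theta^{2}}{4}\|\nabla(\phi^{n+1}-\phi^{n})\|_{0}^{2}-\theta^{2}\lambda^{2}\|\nabla(\phi^{n}-\phi^{n-1})\|_{0}^{2}$ to both sides'' is not literally what you do---you are decomposing the $\theta^{2}$ block, not adding anything---so replace that description with ``completing the square'' to match the algebra you actually carry out.
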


\begin{proof}
By taking the inner products of \eqref{L1_CN_1} and \eqref{L1_CN_2} with $\phi^{n+1}-\phi^{n}$ and $R^{n+1}+R^{n}$ respectively, we have
\bry
 &\dps(\widetilde{L}^{\alpha}_{l}\phi^{n+\frac{1}{2}},\phi^{n+1}-\phi^{n})
 +\frac{\varepsilon^{2}-\theta^{2}}{2}\big(\|\nabla\phi^{n+1}\|^{2}-\|\nabla\phi^{n}\|^{2}\big)\\[9pt]
 &\hspace{4cm}\dps+\frac{\theta^{2}}{2}\big(\nabla(\phi^{n+1}-\phi^{n}-\frac{\Dt_{n+1}}{\Dt_{n}}(\phi^{n}-\phi^{n-1})),\nabla(\phi^{n+1}-\phi^{n})\big)\\[9pt]
 &\hspace{4cm}\dps+\frac{R^{n+1}+R^{n}}{2R^{n+\frac{1}{2}}}\Big(-\theta^{2}\Delta\phi^{n+\frac{1}{2}}+F'(\phi^{n+\frac{1}{2}})+\widetilde{L}^{\alpha}_{h}\phi^{n+\frac{1}{2}},\frac{\phi^{n+1}-\phi^{n}}{\Dt_{n+1}}\Big)=0,\\[20pt]
 &\dps\frac{|R^{n+1}|^{2}-|R^{n}|^{2}}{\Dt_{n+1}}
 =\frac{R^{n+1}+R^{n}}{2R^{n+\frac{1}{2}}}\Big(-\theta^{2}\Delta\phi^{n+\frac{1}{2}}
 +F'(\phi^{n+\frac{1}{2}})+\widetilde{L}^{\alpha}_{h}\phi^{n+\frac{1}{2}},\frac{\phi^{n+1}-\phi^{n}}{\Dt_{n+1}}\Big).
\ery
Summing up the above two equalities, applying the identity
\beq
2a^{n+1}(a^{n+1}-a^{n})=|a^{n+1}|^{2}-|a^{n}|^{2}+|a^{n+1}-a^{n}|^2,
\eeq
and dropping some non-essential positive terms, we obtain
\bex
&&\dps\frac{\varepsilon^{2}-\theta^{2}}{2}\|\nabla\phi^{n+1}\|^{2}_{0}+\frac{\theta^{2}}{4}\|\nabla(\phi^{n+1}-\phi^{n})\|_{0}^{2}+|R^{n+1}|^{2}\\[9pt]
&&\dps-\Big[\frac{\varepsilon^{2}-\theta^{2}}{2}\|\nabla\phi^{n}\|^{2}_{0}+\frac{\theta^{2}}{4}\Big(\frac{\Dt_{n+1}}{\Dt_{n}}\Big)^{2}\|\nabla(\phi^{n}-\phi^{n-1})\|_{0}^{2}+|R^{n}|^{2}\Big]
\\[9pt]
&& \le -\big(\widetilde{L}^{\alpha}_{l}\phi^{n+\frac{1}{2}},\phi^{n+1}-\phi^{n})
= -\frac{\widetilde{b}_{0}}{\Dt_{n+1}}\|\phi^{n+1}-\phi^{n}\|^{2}_{0}
\leq0.
\eex
This completes the proof.
\end{proof}

The L1-CN scheme \eqref{L1_CN_s} can be efficiently implemented
by following the lines similar to the first order scheme \eqref{sche1} described in the previous section.

\subsection {A second order scheme}

We first integrate the equation \eqref{re_prob} and \eqref{re_prob2} from $t_{n}$ to $t_{n+1}$,
and multiply by $\frac{1}{\Dt_{n+1}}$ to give:
\be\label{inteq}\nonumber
 &\dps\frac{1}{\Dt_{n+1}}\int_{t_{n}}^{t_{n+1}}D^{\alpha,t_{n}}_{l,t}\phi dt-\frac{1}{\Dt_{n+1}}\int_{t_{n}}^{t_{n+1}}\Big[\varepsilon^{2}\Delta\phi+\Big(1-\frac{R(t)}{R(t)}\Big)\theta^{2}\Delta\phi+\frac{R(t)}{R(t)}\big(F'(\phi)+D^{\alpha,t_{n}}_{h,t}\phi\big)\Big]dt=0,\\[13pt]
 &\hspace{-1.5cm}
 \dps\frac{1}{\Dt_{n+1}}\int_{t_{n}}^{t_{n+1}}\frac{d R}{d t}dt=\frac{1}{\Dt_{n+1}}\int_{t_{n}}^{t_{n+1}}\frac{1}{2R(t)}\Big(-\theta^{2}\Delta\phi+F'(\phi)+D^{\alpha,t_n}_{h,t}\phi,
\frac{\partial \phi}{\partial t}\Big)dt, \ n=0,1,\cdots. \hspace{-.5cm}
\ee
The idea is to construct second order approximations for the necessary terms involved in the above equations.
We define the finite difference operators $\widehat{L}^{\alpha}_{l}$ and $ \widehat{L}^{\alpha}_{h}$ by:
  \bry
  \widehat{L}^{\alpha}_{l}\phi(t_{n+\frac{1}{2}}):=&\dps\frac{1}{\Dt_{n+1}}\int_{t_{n}}^{t_{n+1}}\Big[\frac{1}{\Gamma(1-\alpha)}\int_{t_{n}}^{t}(t-s)^{-\alpha}\frac{\phi(t_{n+1})-\phi(t_{n})}{\Dt_{n+1}}ds\Big]dt\\[12pt]
  =&\dps \widehat{b}_{0}\frac{\phi(t_{n+1})-\phi(t_{n})}{\Dt_{n+1}},
  \ery
  \bry
 \widehat{L}^{\alpha}_{h}\phi(t_{n+\frac{1}{2}})
  =&\dps\frac{1}{\Dt_{n+1})}\int_{t_{n}}^{t_{n+1}}\Big[\sum_{k=1}^{n}\frac{1}{\Gamma(1-\alpha)}\int_{t_{k}}^{t_{k+1}}(t_{n+\frac{1}{2}}-s)^{-\alpha}\frac{\phi(t_{k+1})-\phi(t_{k})}{\Dt_{k+1}}ds\Big]dt\\[9pt]
  =&\dps \sum_{k=0}^{n-1}\widehat{b}_{n-k}\frac{\phi(t_{k+1})-\phi(t_{k})}{\Dt_{k+1}},
  \ery
 where
\bex
 \widehat{b}_{0}=\frac{\Dt_{n+1}^{1-\alpha}}{\Gamma(3-\alpha)},~~~~~ \widehat{b}_{n-k}=\frac{1}{\Gamma(1-\alpha)\Dt_{n+1}}\int_{t_{n}}^{t_{n+1}}\int_{t_{k}}^{t_{k+1}}(t-s)^{-\alpha}dsdt, k=0,1,\cdots,n-1.
\eex
We want to use these two operators to approximate the local term
$\frac{1}{\Dt_{n+1}}\int_{t_{n}}^{t_{n+1}} D^{\alpha,t_{n}}_{l,t}dt$ and the history term
$\frac{1}{\Dt_{n+1}}\int_{t_{n}}^{t_{n+1}} D^{\alpha,t_{n}}_{h,t}dt$ respectively.
Note that a similar operator without splitting, called $L1^{+}$ formula,
has been used in \cite{JLGZ20} to approximate the Caputo fractional derivative,
and the resulting scheme has been numerically found to be second order accurate.
However, there is no available analysis for the truncation error, nor for the stability.

Applying $\widehat{L}^{\alpha}_{l}\phi(t_{n+\frac{1}{2}})$ and
$ \widehat{L}^{\alpha}_{h}\phi(t_{n+\frac{1}{2}})$ to
approximate $\frac{1}{\Dt_{n+1}}\int_{t_{n}}^{t_{n+1}} D^{\alpha,t_{n}}_{l,t}dt$ and
$\frac{1}{\Dt_{n+1}}\int_{t_{n}}^{t_{n+1}} D^{\alpha,t_{n}}_{h,t}dt$ respectively, and
trapezoidal formula to approximate the remaining terms in \eqref{inteq}, we arrive at
the following $L1^{+}$-CN scheme:
 \begin{subequations}\label{L1p_CN1}
 \begin{align}
 &\dps \widehat{L}^{\alpha}_{l}\phi^{n+\frac{1}{2}}-\varepsilon^{2}\frac{\Delta(\phi^{n+1}+\phi^{n})}{2}+\Big(1-\frac{R^{n+1}+R^{n}}{2R^{n+\frac{1}{2}}}\Big)\theta^{2}\Delta\phi^{n+\frac{1}{2}} \notag\\
 &\hspace{7cm}+\frac{R^{n+1}+R^{n}}{2R^{n+\frac{1}{2}}}\Big(F'(\phi^{n+\frac{1}{2}})+\widehat{L}^{\alpha}_{h}\phi^{n+\frac{1}{2}}\Big),\label{L1_CN_1p}\\
 &\dps\frac{R^{n+1}-R^{n}}{\tau_{n+1}}=\frac{1}{2R^{n+\frac{1}{2}}}
 \Big(-\theta^{2}\Delta\phi^{n+\frac{1}{2}}+F'(\phi^{n+\frac{1}{2}})+\widehat{L}^{\alpha}_{h}\phi^{n+\frac{1}{2}},
\frac{\phi^{n+1}-\phi^{n}}{\tau_{n+1}}\Big),\label{L1_CN_2p}
 \end{align}
 \end{subequations}
 where $\phi^{n+\frac{1}{2}}:=\phi^{n}+\frac{\tau_{n+1}}{2\tau_{n}}[\phi^{n}-\phi^{n-1}]$ and  $R^{n+\frac{1}{2}}:=R^{n}+\frac{\tau_{n+1}}{2\tau_{n}}[R^{n}-R^{n-1}]$ are the explicit approximation to $\phi(t_{n+\frac{1}{2}})$ and $R(t_{n+\frac{1}{2}}),$ respectively.

We leave the error estimation as an open question, but present the stability result for the scheme in the following theorem.

\begin{theorem}\label{th3}
For the $L1^{+}$-CN scheme \eqref{L1p_CN1}, it holds
\bex 
&&\dps\frac{\varepsilon^{2}-\theta^{2}}{2}\|\nabla\phi^{n+1}\|^{2}_{0}+\frac{\theta^{2}}{4}\|\nabla(\phi^{n+1}-\phi^{n})\|_{0}^{2}+|R^{n+1}|^{2}\\[9pt]
&&\dps-\Big[\frac{\varepsilon^{2}-\theta^{2}}{2}\|\nabla\phi^{n}\|^{2}_{0}+\frac{\theta^{2}}{4}\Big(\frac{\Dt_{n+1}}{\Dt_{n}}\Big)^{2}\|\nabla(\phi^{n}-\phi^{n-1})\|_{0}^{2}+|R^{n}|^{2}\Big]
\\[9pt]
&&\dps\leq-\frac{\widehat{b}_{0}}{\Dt_{n+1}}\|\phi^{n+1}-\phi^{n}\|^{2}_{0}\leq0,~~n=0,1,\cdots.
\eex
This implies that the scheme is unconditionally stable
in the cases of the uniform mesh or $\theta=0$.
\end{theorem}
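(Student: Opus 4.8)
The plan is to reproduce, essentially verbatim, the argument used for Theorem \ref{th2}, since the $L1^{+}$-CN scheme \eqref{L1p_CN1} is algebraically identical to the L1-CN scheme \eqref{L1_CN_s} once the operators $\widetilde{L}^{\alpha}_{l},\widetilde{L}^{\alpha}_{h}$ are replaced by $\widehat{L}^{\alpha}_{l},\widehat{L}^{\alpha}_{h}$. First I would take the $L^{2}(\Omega)$-inner product of \eqref{L1_CN_1p} with $\phi^{n+1}-\phi^{n}$ and of \eqref{L1_CN_2p} with $R^{n+1}+R^{n}$, multiply the latter identity by $\tau_{n+1}$, and add the two. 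Writing $\lambda:=\frac{R^{n+1}+R^{n}}{2R^{n+\frac{1}{2}}}$ for brevity, the decisive point is that the terms carrying the factor $\lambda$ combine exactly: in the first identity the contribution $(1-\lambda)\theta^{2}(\Delta\phi^{n+\frac{1}{2}},\phi^{n+1}-\phi^{n})+\lambda\big(F'(\phi^{n+\frac{1}{2}})+\widehat{L}^{\alpha}_{h}\phi^{n+\frac{1}{2}},\phi^{n+1}-\phi^{n}\big)$ reorganizes as $\theta^{2}(\Delta\phi^{n+\frac{1}{2}},\phi^{n+1}-\phi^{n})+\lambda\big(-\theta^{2}\Delta\phi^{n+\frac{1}{2}}+F'(\phi^{n+\frac{1}{2}})+\widehat{L}^{\alpha}_{h}\phi^{n+\frac{1}{2}},\phi^{n+1}-\phi^{n}\big)$, and the second (scaled) summand is precisely $|R^{n+1}|^{2}-|R^{n}|^{2}$ by \eqref{L1_CN_2p}. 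This is the mechanism by which the auxiliary-variable formulation disposes of both the nonlinear potential $F'$ and the entire history term $\widehat{L}^{\alpha}_{h}\phi^{n+\frac{1}{2}}$ without requiring any estimate on them.

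After this cancellation the combined identity reduces to
\[
(\widehat{L}^{\alpha}_{l}\phi^{n+\frac{1}{2}},\phi^{n+1}-\phi^{n})+\frac{\varepsilon^{2}}{2}\big(\|\nabla\phi^{n+1}\|_{0}^{2}-\|\nabla\phi^{n}\|_{0}^{2}\big)+\theta^{2}(\Delta\phi^{n+\frac{1}{2}},\phi^{n+1}-\phi^{n})+|R^{n+1}|^{2}-|R^{n}|^{2}=0,
\]
where I have integrated by parts in the $\varepsilon^{2}$-term using the periodic or Neumann boundary conditions (no boundary contribution). I would then split $\varepsilon^{2}=(\varepsilon^{2}-\theta^{2})+\theta^{2}$, integrate by parts once more in the $\theta^{2}$-term, and collect all $\theta^{2}$-contributions; substituting the extrapolation $\phi^{n+\frac{1}{2}}=\phi^{n}+\frac{\tau_{n+1}}{2\tau_{n}}(\phi^{n}-\phi^{n-1})$ turns them into $\frac{\theta^{2}}{2}\big(\nabla[(\phi^{n+1}-\phi^{n})-\frac{\tau_{n+1}}{\tau_{n}}(\phi^{n}-\phi^{n-1})],\nabla(\phi^{n+1}-\phi^{n})\big)$, exactly as in the proof of Theorem \ref{th2}.

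Finally, I would apply the identity $2(a^{n+1},a^{n+1}-a^{n})=\|a^{n+1}\|_{0}^{2}-\|a^{n}\|_{0}^{2}+\|a^{n+1}-a^{n}\|_{0}^{2}$ with $a^{n+1}=\nabla(\phi^{n+1}-\phi^{n})$ and $a^{n}=\frac{\tau_{n+1}}{\tau_{n}}\nabla(\phi^{n}-\phi^{n-1})$, and drop the non-essential positive term $\frac{\theta^{2}}{4}\|a^{n+1}-a^{n}\|_{0}^{2}$, which bounds the $\theta^{2}$-contribution below by $\frac{\theta^{2}}{4}\|\nabla(\phi^{n+1}-\phi^{n})\|_{0}^{2}-\frac{\theta^{2}}{4}\big(\frac{\tau_{n+1}}{\tau_{n}}\big)^{2}\|\nabla(\phi^{n}-\phi^{n-1})\|_{0}^{2}$ and thus produces the two $\theta^{2}$-terms in the claimed energy. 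Since $\widehat{L}^{\alpha}_{l}\phi^{n+\frac{1}{2}}=\widehat{b}_{0}\frac{\phi^{n+1}-\phi^{n}}{\tau_{n+1}}$ with $\widehat{b}_{0}=\frac{\tau_{n+1}^{1-\alpha}}{\Gamma(3-\alpha)}>0$, the local term equals $\frac{\widehat{b}_{0}}{\tau_{n+1}}\|\phi^{n+1}-\phi^{n}\|_{0}^{2}$, giving the asserted right-hand side $-\frac{\widehat{b}_{0}}{\tau_{n+1}}\|\phi^{n+1}-\phi^{n}\|_{0}^{2}\le0$. The only genuinely delicate point is the sign of this $\theta^{2}$ cross term: the identity charges back a contribution scaled by $(\tau_{n+1}/\tau_{n})^{2}$, so the discrete energy is monotone only when this ratio does not exceed $1$---in particular on a uniform mesh, where it equals $1$---or when $\theta=0$ removes the term entirely, which are exactly the two cases asserted. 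No new property of the $L1^{+}$ operators beyond the positivity of $\widehat{b}_{0}$ is needed; all remaining steps are the same routine integration by parts and algebra as in Theorem \ref{th2}.
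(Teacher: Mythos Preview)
Your proposal is correct and follows exactly the approach the paper intends: the paper's own proof of Theorem \ref{th3} simply states that it is ``very similar to Theorem \ref{th2}'' and omits the details, and you have reproduced the proof of Theorem \ref{th2} with $\widetilde{L}^{\alpha}_{l},\widetilde{L}^{\alpha}_{h},\widetilde{b}_{0}$ replaced by $\widehat{L}^{\alpha}_{l},\widehat{L}^{\alpha}_{h},\widehat{b}_{0}$, which is precisely what is required. The cancellation of the $\lambda$-terms, the handling of the $\theta^{2}$ cross term via the identity $2(a^{n+1},a^{n+1}-a^{n})=\|a^{n+1}\|_{0}^{2}-\|a^{n}\|_{0}^{2}+\|a^{n+1}-a^{n}\|_{0}^{2}$, and the use of $\widehat{b}_{0}>0$ are all carried out correctly.
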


\begin{proof}
The proof is very similar to Theorem \ref{th2}, we leave it to interested readers.
\end{proof}

\begin{remark}
One can also reformulate the original equation \eqref{FAC}
into the following equivalent form:
 \begin{subequations}\label{reform2}
 \begin{align}
& \dps D^{\alpha,t_n}_{l,t}\phi-\varepsilon^{2}\Delta\phi+\Big(1-\frac{R(t)}{\sqrt{\overline{E}_{\theta}(\phi)+C_{0}}}\Big)\theta^{2}\Delta\phi+\frac{R(t)}{\sqrt{\overline{E}_{\theta}(\phi)+C_{0}}}\Big(F'(\phi)+D^{\alpha,t_n}_{h,t}\phi\Big)=0,\\[11pt]
 &\dps\frac{d R}{d t}=\frac{1}{2\sqrt{\overline{E}_{\theta}(\phi)+C_{0}}}
 \Big(-\theta^{2}\Delta\phi+F'(\phi)+D^{\alpha,t_n}_{h,t}\phi,
\frac{\partial \phi}{\partial t}\Big), \ \ t\in [t_n, t_{n+1}], n=0,1,\cdots.
\end{align}
\end{subequations}
Starting with this equivalent system and following the discussion in the above sections,
it is also possible to construct unconditionally stable schemes based on L1, L1-CN and $L1^{+}-$CN formula for the discretization of
the fractional derivatives.
However, compared to the schemes constructed for the reformulation \eqref{re_prob2}-\eqref{re_prob},
a drawback using \eqref{reform2} is that one has to compute $\overline{E}^{n}_{\theta}$ or $\overline{E}^{n+\frac{1}{2}}_{\theta}$, which is more expensive than computing $R^{n}$ or $R^{n+\frac{1}{2}}$.
Remember that $\overline{E}^{n+\frac{1}{2}}_{\theta}$ is an explicit approximation to
$\overline{E}_{\theta}(\phi(t_{n+1/2}))$ involving the nonlocal terms $F_h(t_n,t_{n+\frac{1}{2}}; \phi)+\sum_{k=1}^{n-1}F_h(t_{k},t_{k+1}; \phi)$.
{\color{black} It is notable that
the new SAV approach developed recently in \cite{CLS20,CS20} may be applied to
deal with the nonlinear term and the history part of the fractional derivative.
Then energy stable schemes can be constructed based on the following reformulation:
\bry
&\dps D^{\alpha, t_n}_{l,t}\phi-\varepsilon^{2}\Delta\phi+\big(1-\eta(t)\big)
\theta^{2}\Delta\phi+\eta(t)\big(F'(\phi) + D^{\alpha, t_n}_{h,t}\phi\big)
=0,\\[9pt]
&\dps\frac{d\overline{E}_{\theta}(\phi)}{dt}=\eta(t)\Big(-\theta^{2}\Delta\phi+F'(\phi) + D^{\alpha, t_n}_{h,t}\phi,\frac{\partial \phi}{\partial t}\Big),
\ery
where the scalar auxiliary function $\eta(t)$ is a Lagrange multiplier with $\eta(0)=1.$

}
\end{remark}


\section{Numerical results}
\label{sect4}
\setcounter{equation}{0}

This section is devoted to numerical investigation of the proposed schemes
in terms of the accuracy and stability.
For the comparison purpose, we will repeat most of the numerical examples in our previous work \cite{HZX20}. In the following examples, we always set $\theta=0$ and $C_0=0$ in the schemes unless specified otherwise.
The spatial discretization is the Fourier method or Legendre Galerkin spectral method using numerical quadratures.
In order to test the accuracy of the proposed schemes, the error is measured by the maximum norm, i.e., $\dps\max_{1\leq n\leq M}\|\phi^{n}-\phi(t_{n})\|_{\infty}$
or $\dps\max_{1\leq n\leq M}\|\phi^{n}_{M}-\phi^{2n}_{2M}\|_{\infty}$, the latter norm will be used when  the exact solution is not available.
In order to reduce the computational complexity, a fast evaluation technique
based on the sum-of-exponentials approach \cite{JZZ17,YSZ17} is used to calculate
the history part $D^{\alpha,t_n}_{h,t}$ of the time fractional derivative.

\subsection{Convergence order test}
\begin{example}\label{expl1}
Consider the following fractional Allen-Cahn equation:
\bex
\dps_{0}{}\!D^{\alpha}_{t}\phi-\varepsilon^{2}\Delta \phi-\phi(1-\phi^{2})=s,\quad
(\x,t)\in (0,2\pi)^2\times(0,T],
\eex
subject to the periodic boundary condition, where $s(\x,t)$ is a fabricated source term
such that the exact solution is
$$\phi(\x,t)=0.2 t^5 \sin(x)\cos(y).$$

\end{example}
The Fourier spectral method with $128\times128$ modes is used to discretize the equations in space.
It has been checked that this Fourier mode number is large enough
so that the spatial discretization error is negligible compared to the temporal discretization.
We present in Figure \ref{fig1}
the error as functions of the time
step sizes in log-log scale with $T=1$.
It is observed, as expected, that the L1 scheme \eqref{semi_D}, L1-CN scheme \eqref{L1_CN_s},
and L$1^{+}$-CN scheme \eqref{L1p_CN1} achieve respectively the first order, $2-\alpha$ order, and second order convergence for all tested $\alpha$.

\begin{figure*}[htbp]
\begin{minipage}[t]{0.49\linewidth}
\centerline{\includegraphics[scale=0.55]{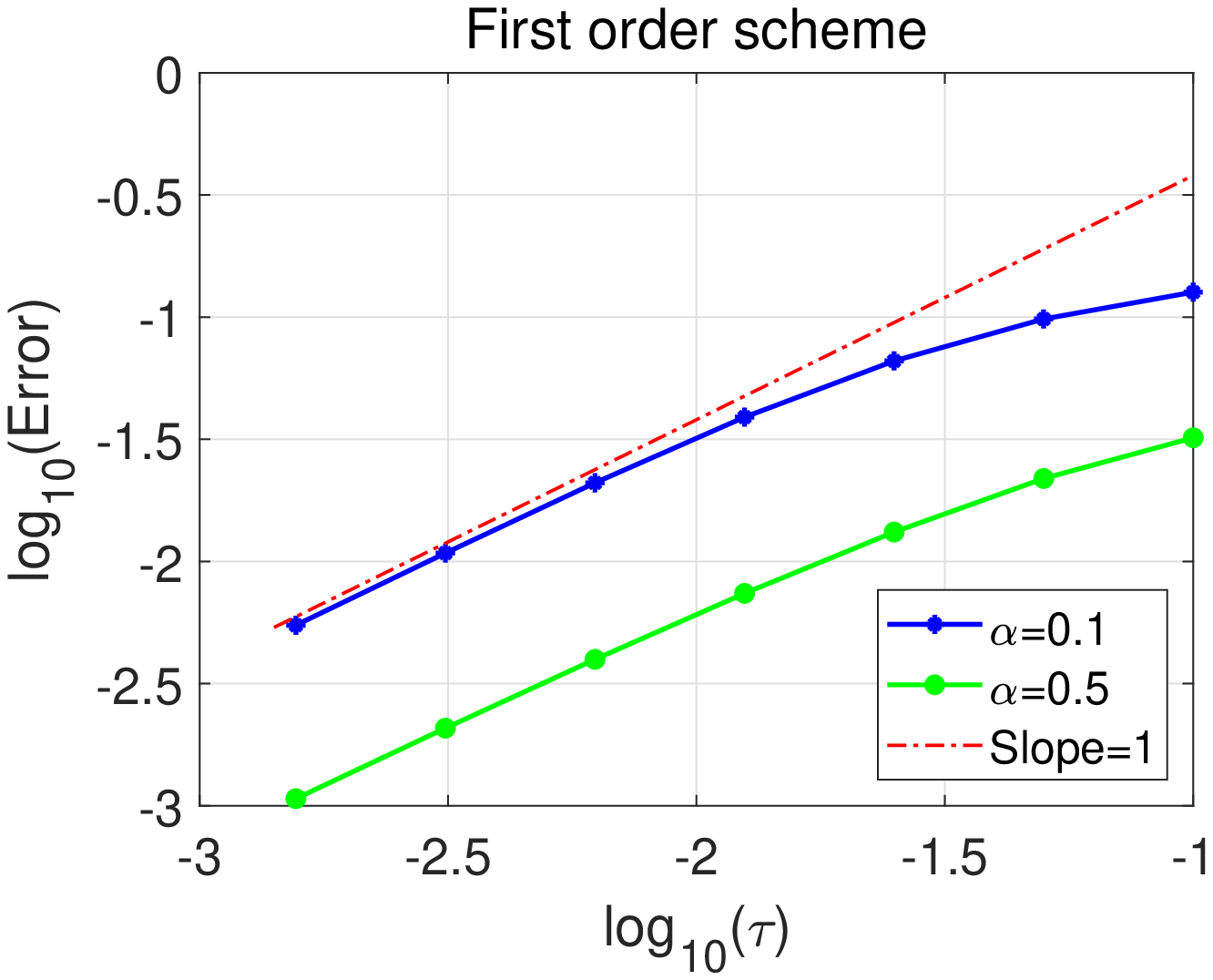}}
\centerline{(a) first order scheme}
\end{minipage}
\vskip 3mm
\begin{minipage}[t]{0.49\linewidth}
\centerline{\includegraphics[scale=0.55]{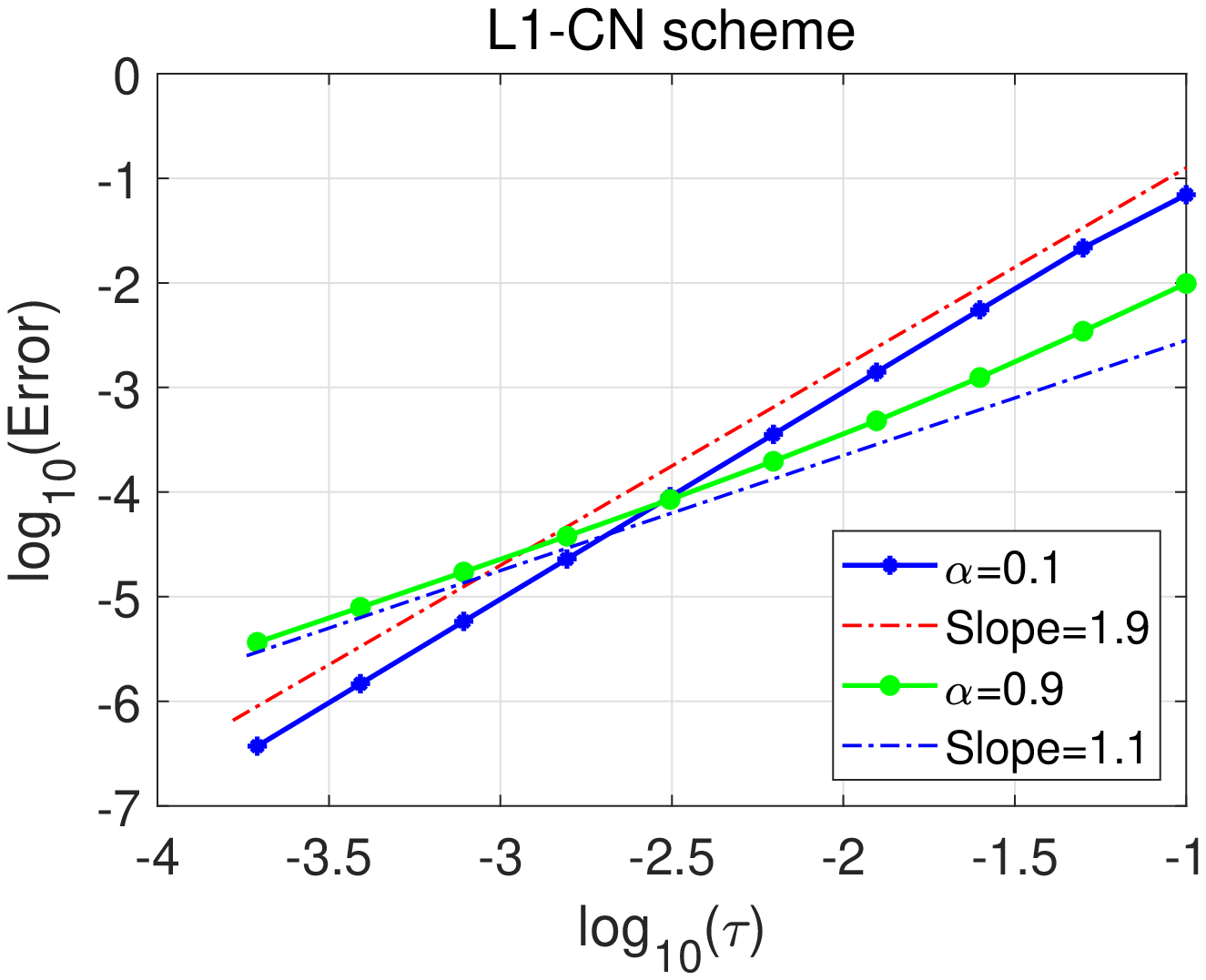}}
\centerline{(b) L1-CN scheme }
\end{minipage}
\begin{minipage}[t]{0.49\linewidth}
\centerline{\includegraphics[scale=0.55]{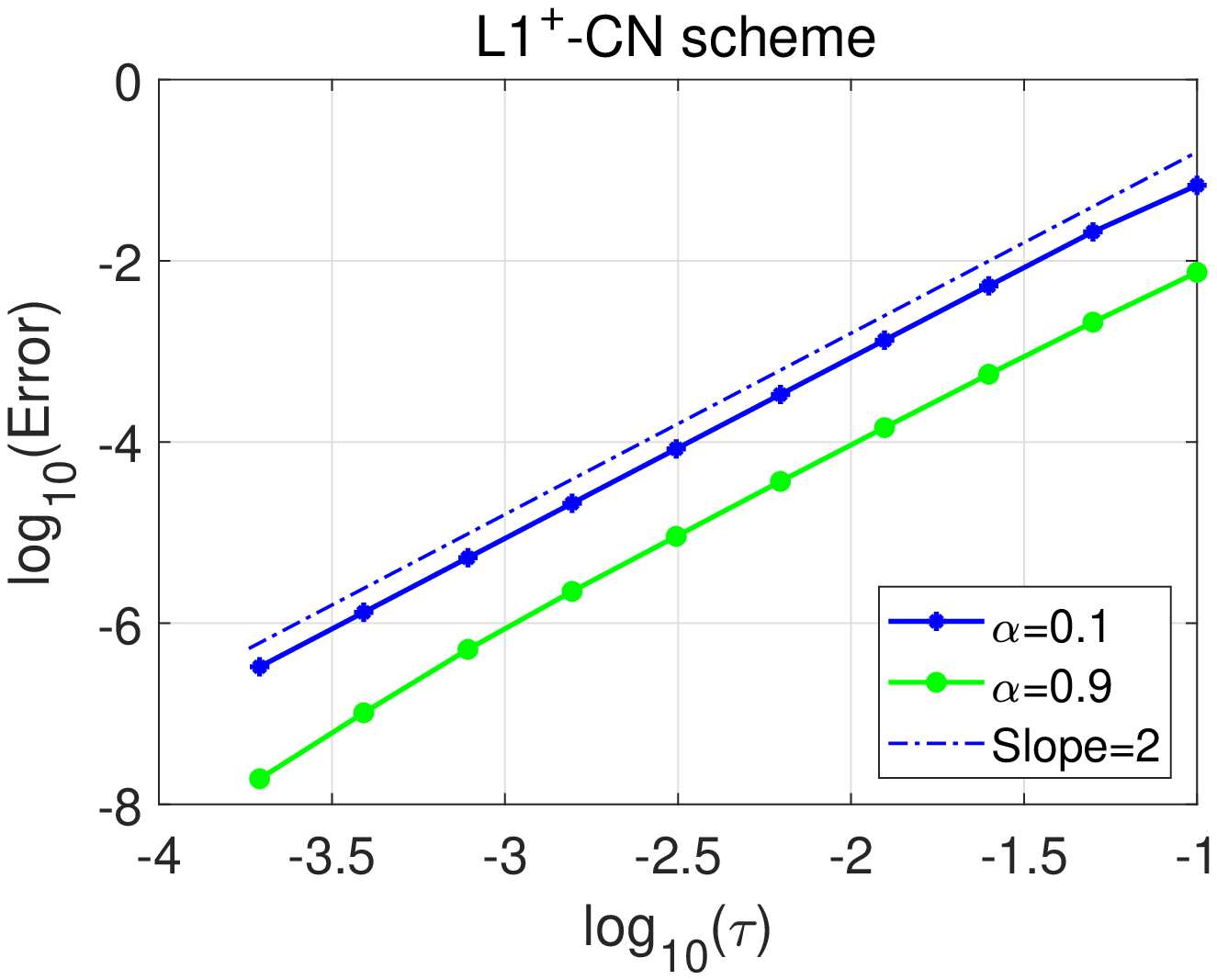}}
\centerline{(c) L$1^{+}$-CN scheme}
\end{minipage}
\caption{
(Example \ref{expl1}) Error decay versus the time step sizes for the first order scheme, L1-CN scheme,  and L$1^{+}$-CN scheme with several different $\alpha$.
}\label{fig1}
\end{figure*}

\begin{example}\label{expl2}
Consider the same equation as in Example \ref{expl1}, but with the Neumann boundary condition, and the exact solution
$$\phi(\x,t)=0.2(t^{\mu}+1)\cos(\pi x)\cos(\pi y), ~(\x,t)\in (-1,1)^2\times(0,T],$$
which has limited regularity at the initial time $t=0$.
\end{example}

In this test, a Legendre Galerkin spectral method with polynomials of degree 32 in each spatial direction
is used for the spatial discretization. The purpose of
this test is to not only verify the convergence rate of the schemes,
but also investigate the impact of the regularity on the accuracy.
In particular, we are interested in studying the impact of the graded mesh parameter $r$ on the convergence rate.
It will help us to choose the optimal value of $r$ to recover the convergence rate of the proposed schemes for
low regular solutions.
The calculation is performed by using the L1-CN scheme \eqref{L1_CN_s} and L$1^{+}$-CN scheme \eqref{L1p_CN1} with $M=64\times 2^{k},k=1,2,\cdots,9$.
In Figure \ref{fig2}, we
plot the $ L^{\infty}$ errors in log-log scale with respect to the maximum time step size, i.e.,
$\Dt=t_{M}-t_{M-1}$.
The presented results are in a perfect agreement with the expected convergence rates, i.e.,
$\min\{\mu r,2-\alpha\}$ order for the L1-CN scheme, and $\min\{\mu r,2\}$ order for the L$1^{+}$-CN scheme.
This suggests use of the graded mesh with $r=\frac{2-\alpha}{\mu}$ for the  L1-CN scheme and
$r=\frac{2}{\mu}$ for the L$1^{+1}$-CN scheme.
Doing so the schemes reach the optimal convergence rates for solutions of this class.

\begin{figure*}[htbp]
\begin{minipage}[t]{0.49\linewidth}
\centerline{\includegraphics[scale=0.55]{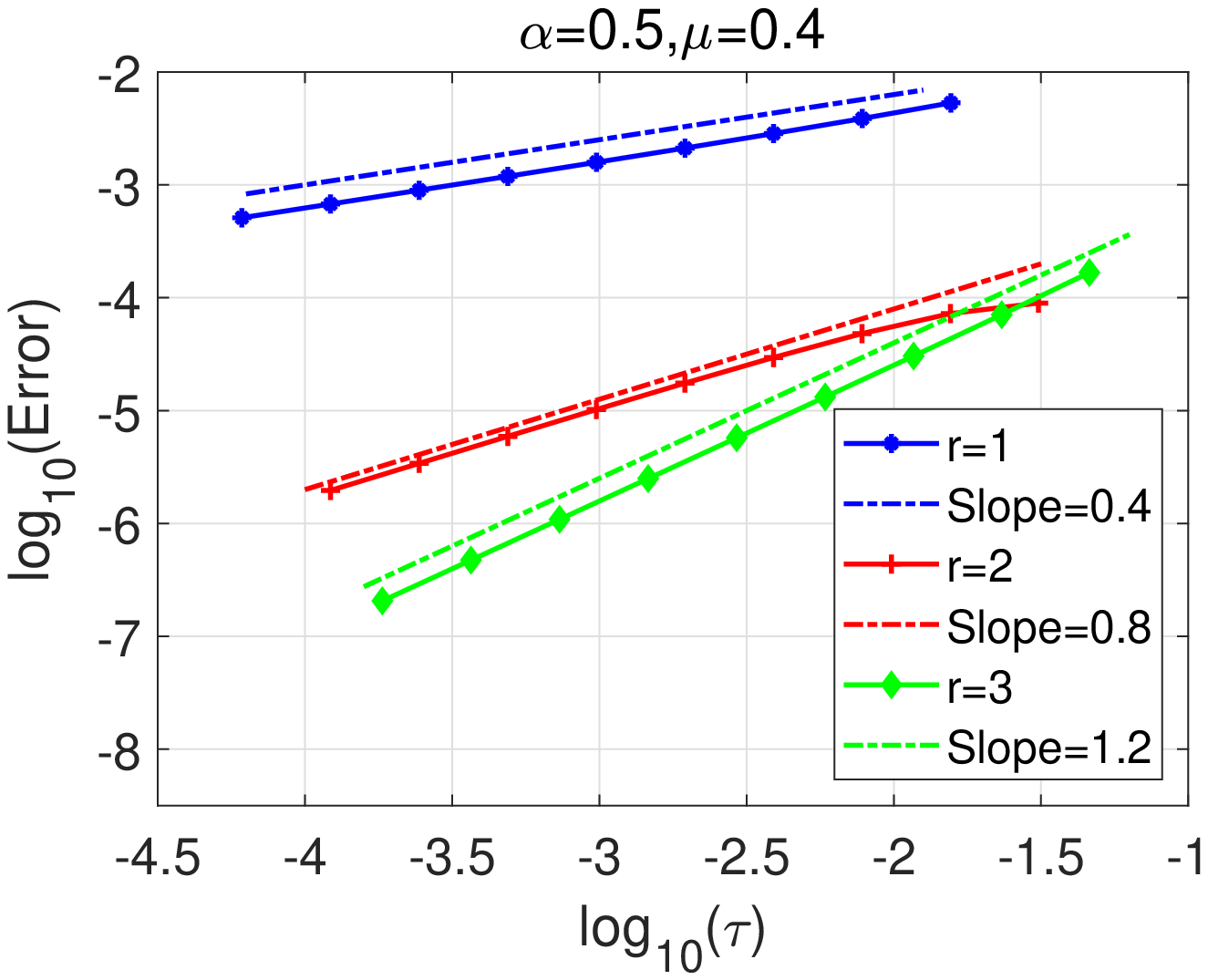}}
\centerline{\hspace{9cm}(a) L1-CN scheme}
\end{minipage}
\begin{minipage}[t]{0.49\linewidth}
\centerline{\includegraphics[scale=0.55]{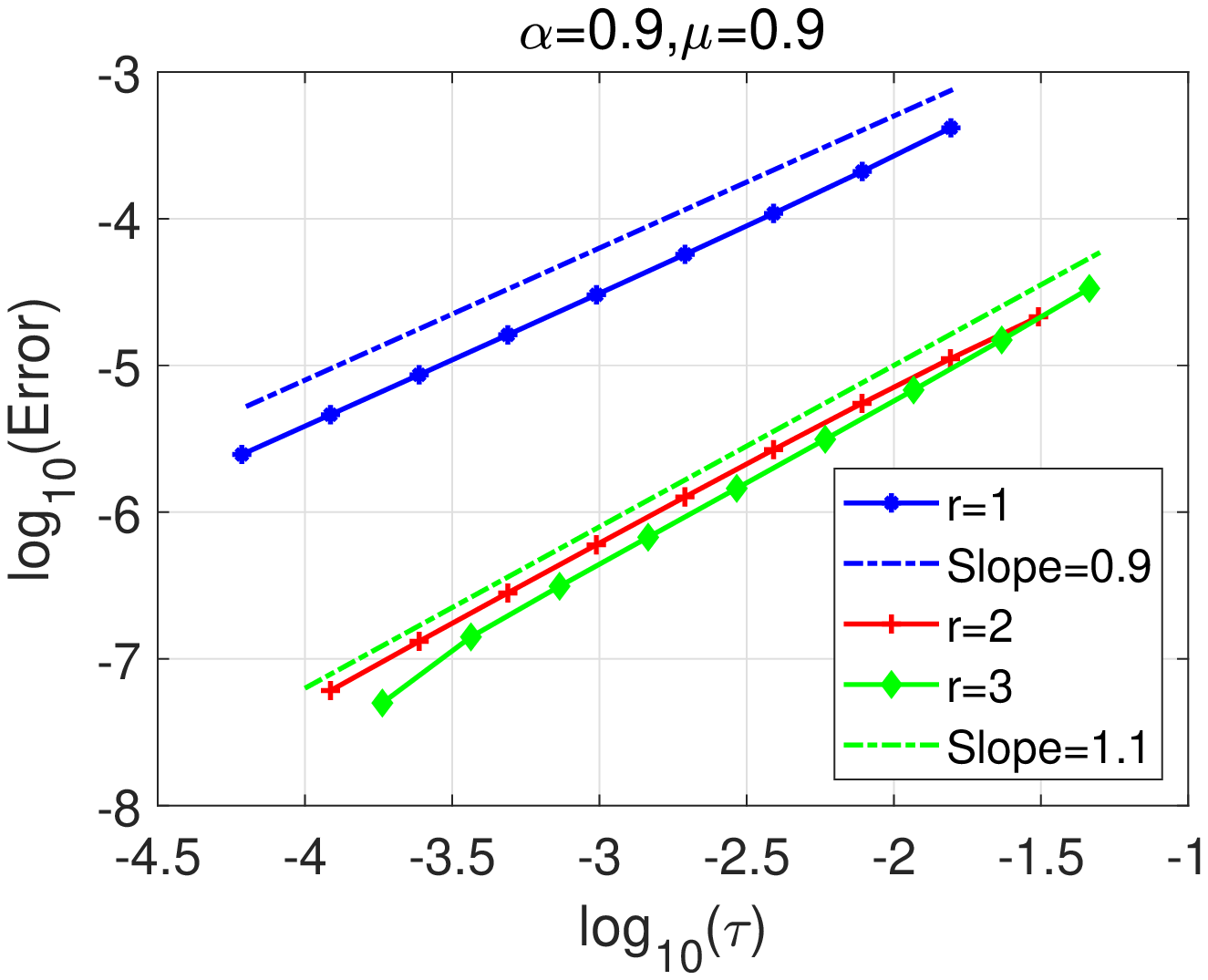}}
\centerline{}
\end{minipage}
\vskip 3mm
\begin{minipage}[t]{0.49\linewidth}
\centerline{\includegraphics[scale=0.55]{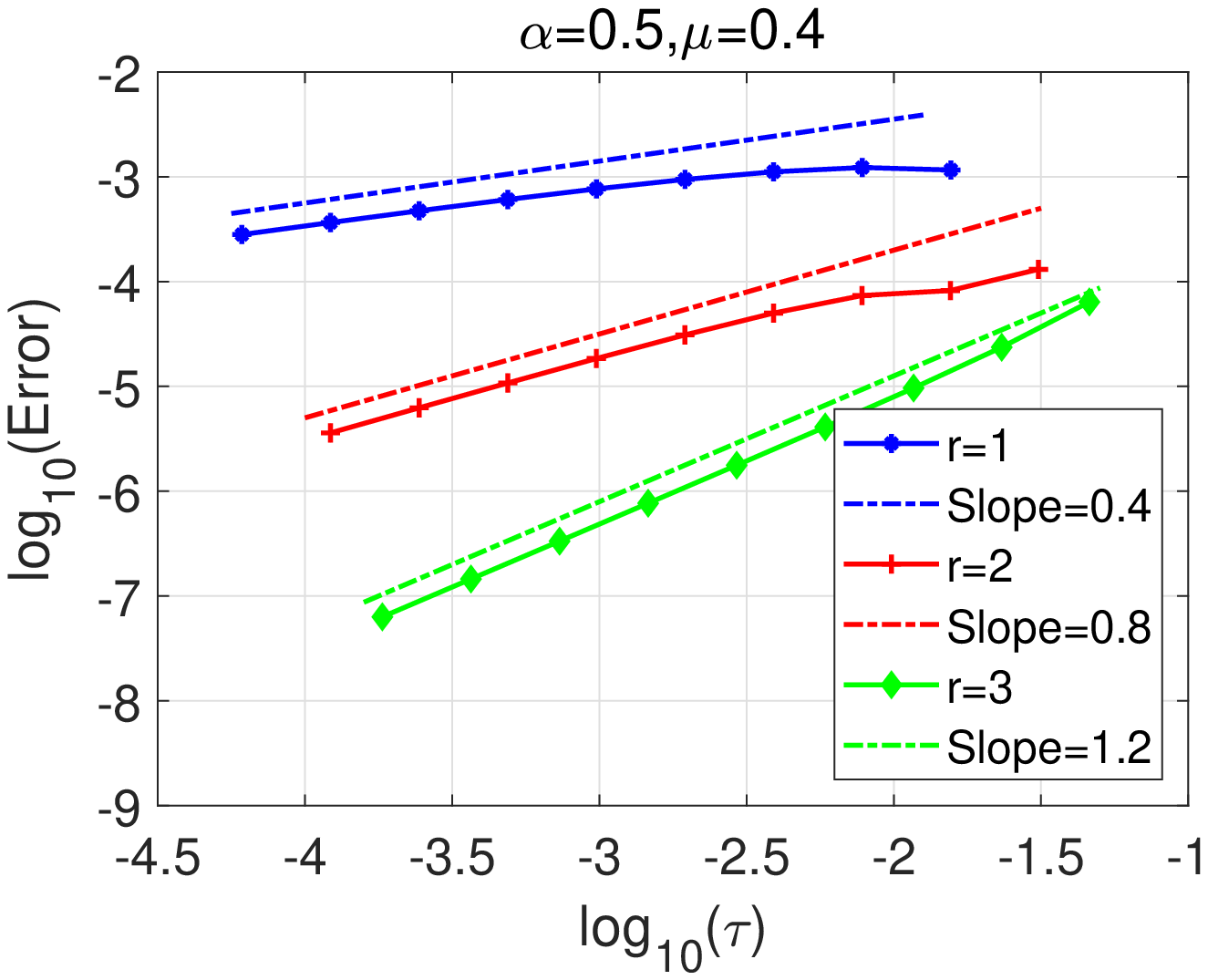}}
\centerline{\hspace{9cm}(b) L$1^{+}$-CN scheme}
\end{minipage}
\begin{minipage}[t]{0.49\linewidth}
\centerline{\includegraphics[scale=0.55]{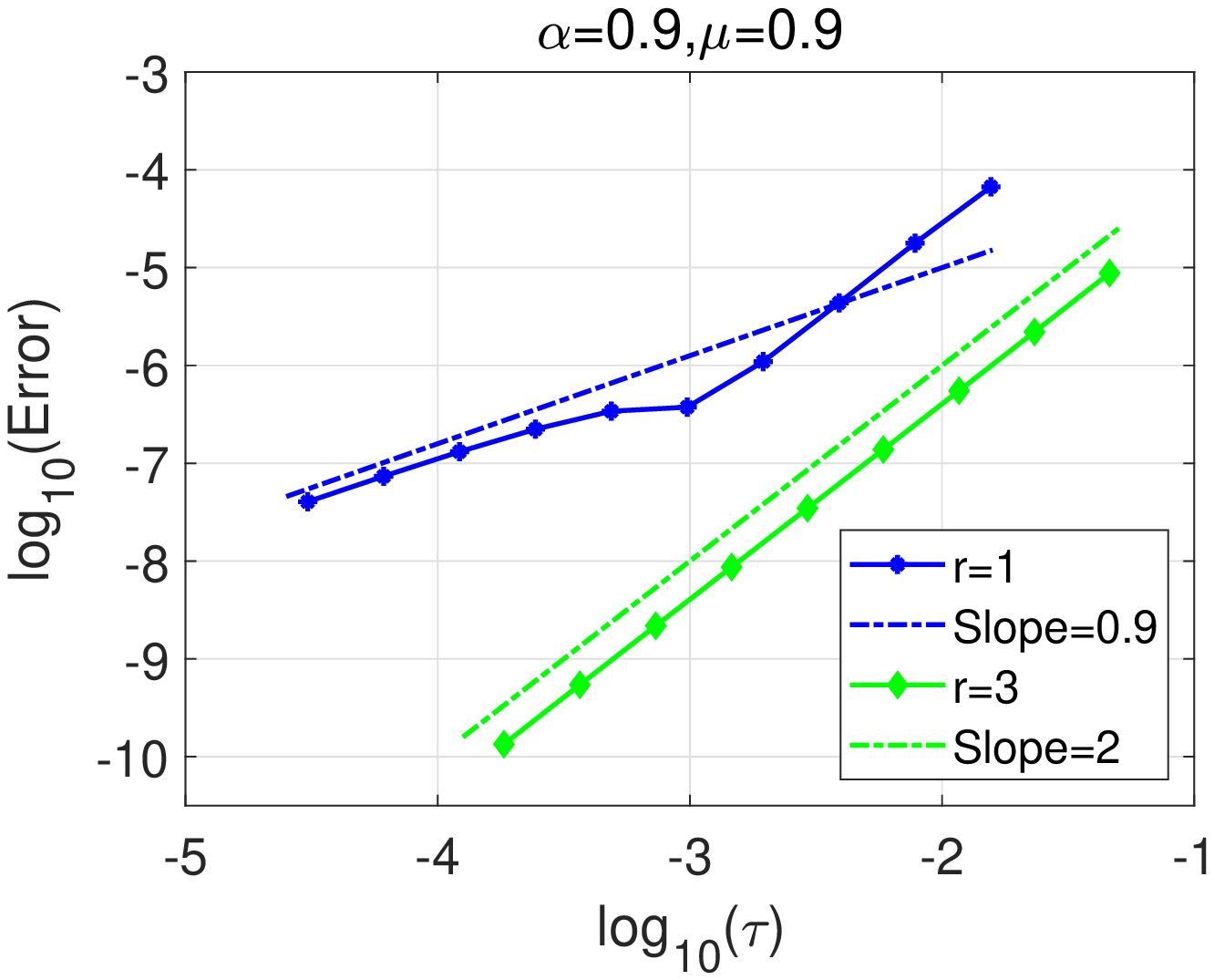}}
\centerline{}
\end{minipage}
\caption{(Example \ref{expl2}) Error history for the L1-CN scheme and L$1^{+}$-CN scheme
for different values of the mesh parameter $r$ and fractional derivative order $\alpha$:
the first line corresponds to L1-CN scheme;
the second line is for L$1^{+}$-CN scheme.
}\label{fig2}
\end{figure*}

\begin{example}\label{expl3}
Consider the fractional Allen-Cahn Neumann problem in the domain $(-1,1)\times(-1,1)$ with
the initial condition $\phi(\x,0)=\cos(4\pi x)\cos(4\pi y)$ without a source term. The exact solution is unavailable.
\end{example}

The spatial discretization uses
the Legendre spectral method with high enough mode number to avoid possible spatial error contamination.
Since the exact solution is unknown, the error of the numerical solution is defined as:
\bex
e(\tau)=\dps\max_{1\leq n\leq M}\|\phi^{n}_{M}-\phi^{2n}_{2M}\|_{\infty}
\eex
It is known that the time fractional operator creates some kind of singularity at the initial time, and the solution behaves like
$t^{\alpha}$ with respect to the time variable; see, e.g., \cite{DYZ19} for a numerical confirmation of this behavior.
In this example, we will take a closer look at the initial error
to investigate how the initial low regularity affects the accuracy of the computed solutions.
We also study the impact of the mesh ratio $r$ on the convergence order.
The calculation is run up $T=0.01$ using graded mesh with grid points $M$ ranging from
20 to $10\times 2^{10}$.
In Figure \ref{fig3}, we
present the errors in log-log scale with respect to the maximum mesh size, i.e.,
$\Dt=\dps\max_{1\leq n\leq M}(t_{n}-t_{n-1})=t_{M}-t_{M-1}$.
It is observed in the figure that
the L1-CN scheme \eqref{L1_CN_s} and L$1^{+}$-CN scheme \eqref{L1p_CN1} attain the convergence rate $\min\{\alpha r,2-\alpha\}$ and $\min\{\alpha r,2\}$ respectively for all tested $\alpha$ and $r$.
Clearly the optimal $r$ is $\frac{2-\alpha}{\alpha}$ for the scheme \eqref{L1_CN_s} and $\frac{2}{\alpha}$ for the scheme \eqref{L1p_CN1}.
In these cases, both schemes reach the theoretical convergence order, i.e., $2-\alpha$ order and second order respectively.
For the uniform mesh, i.e., $r=1$, the schemes lose the optimal convergence order. This is indicative that
the regularity of the solution is lower than what the theoretical convergence order of the schemes requests.

The stability of the proposed schemes is investigated through running the calculation
with $\varepsilon^{2}=0.001$ for long time,
i.e., $T=50$, using a time step size as large as possible.
In view of the singularity feature at the beginning time,
we split the interval $[0,T]$ into two subintervals $[0,1]$ and $(1,T]$.
We compute the solution using the graded mesh with optimal $r$ in
$[0,1]$,
and using the uniform mesh with the time step size {\color{black}${\triangle t}$} in $(1,T]$.
The computed modified energies and original energy
are presented in Figure \ref{fig4}.
The modified energies shown in the left figures exhibits dissipative features during the running time even
if a large time step size {\color{black}${\triangle t} =1$} is used.
This demonstrates good stability and modified energy dissipation properties
of the schemes proved in Theorems \ref{th2} and \ref{th3}.
However, as shown in the right figures, the original energy is dissipative
only for relatively small time step size. Precisely,
the original energy fails to keep dissipation during some time period for the solution computed with
{\color{black}${\triangle t}=1$}.
It is noteworthy that this failure is not due to the instability of the schemes,
but due to possible large error caused by
the use of the large time step size.
Another notable fact is that all the original energy curves coincide with each other
for the time step sizes ranging from $0.0001$ to $0.1$.
In fact the computed original energy is
a key indication of the efficiency of the numerical methods for phase field models.
The observed dissipation feature of the original energy signifies that the solution evolves in the right way,
which is important for long time simulation.
The last point we want to emphasize is that although the time step size {\color{black}${\triangle t}=1$} was not able to
produce correct original energy dissipation,  use of larger {\color{black}${\triangle t}$} is still possible through an adaptive strategy.
That is, adaptively utilize larger {\color{black}${\triangle t}$} during the time period when the phase transition is slow.
Doing so will benefit the most from the unconditional stability of the proposed schemes.

\begin{figure*}[htbp]
\begin{minipage}[t]{0.49\linewidth}
\centerline{\includegraphics[scale=0.55]{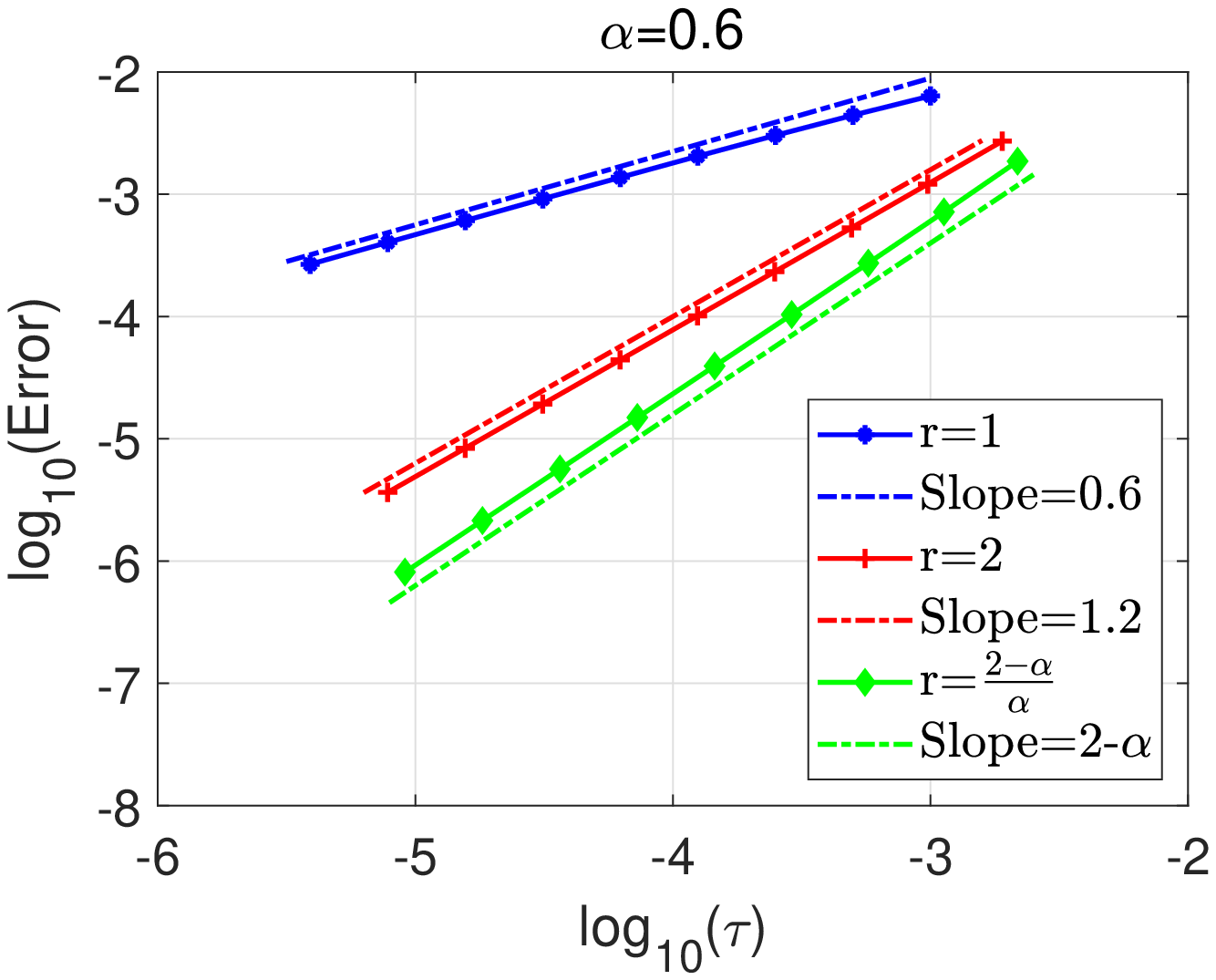}}
\centerline{\hspace{9cm}(a) L1-CN scheme}
\end{minipage}
\begin{minipage}[t]{0.49\linewidth}
\centerline{\includegraphics[scale=0.55]{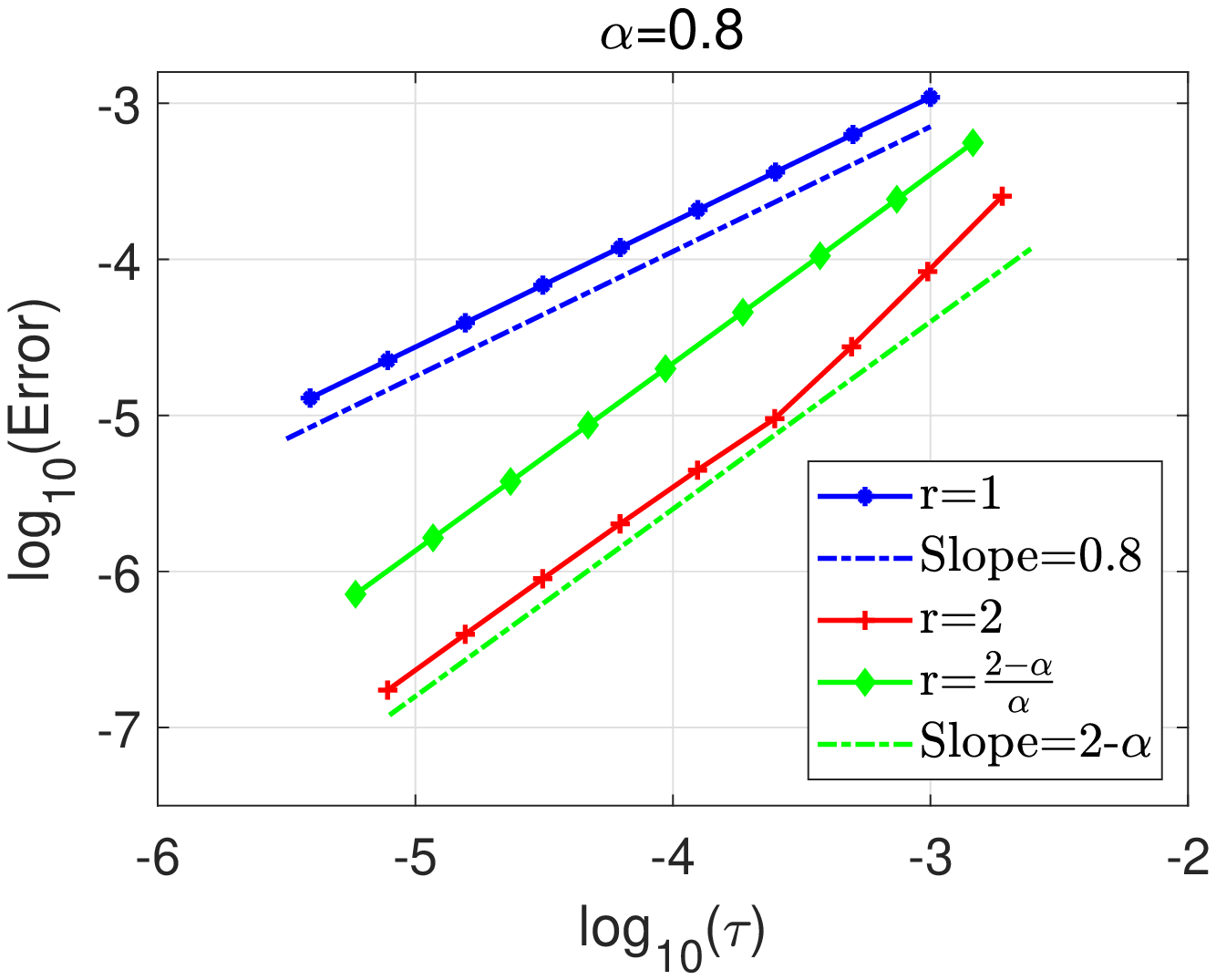}}
\centerline{}
\end{minipage}
\vskip 3mm
\begin{minipage}[t]{0.49\linewidth}
\centerline{\includegraphics[scale=0.55]{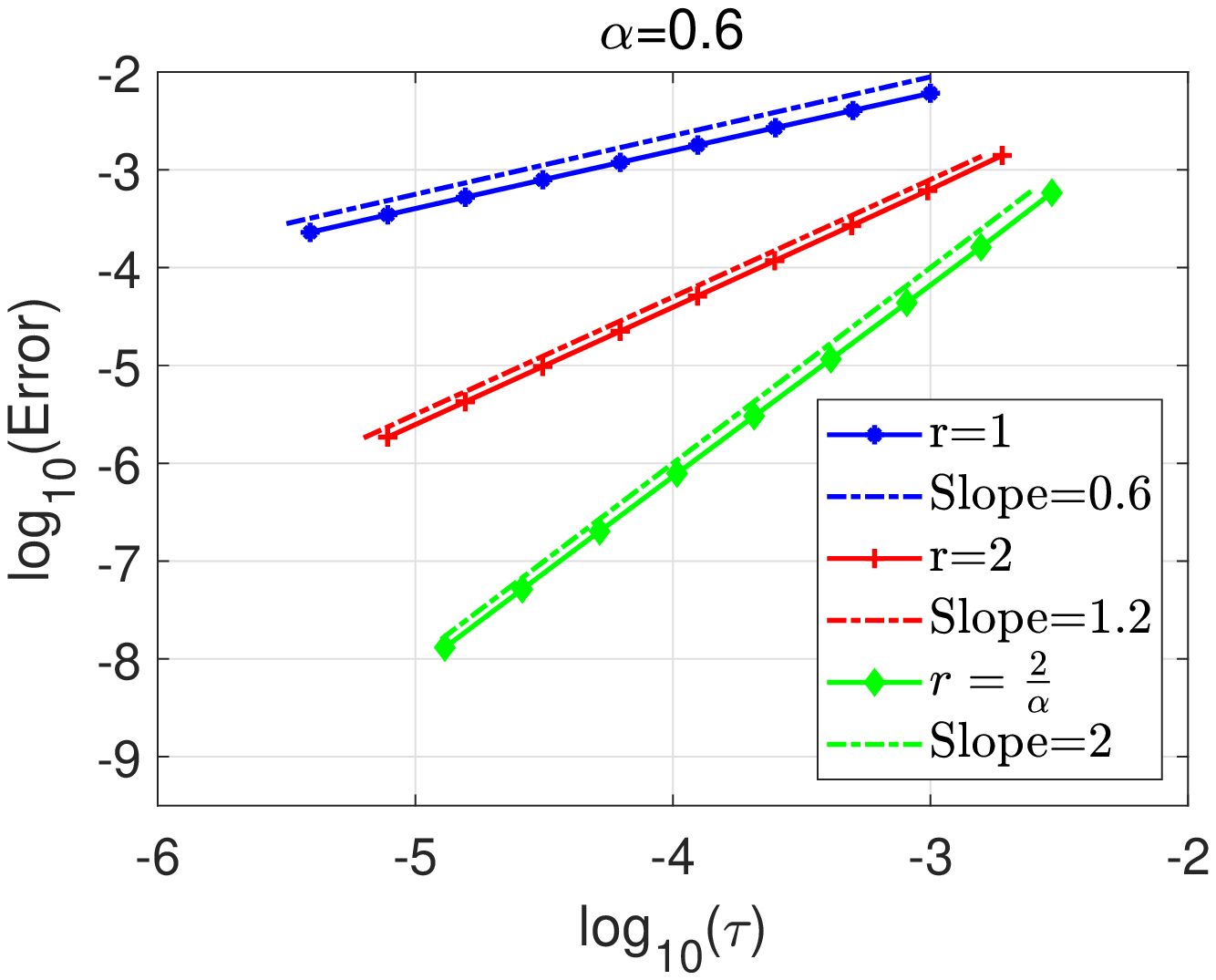}}
\centerline{\hspace{9cm}(b) L$1^{+}$-CN scheme}
\end{minipage}
\begin{minipage}[t]{0.49\linewidth}
\centerline{\includegraphics[scale=0.55]{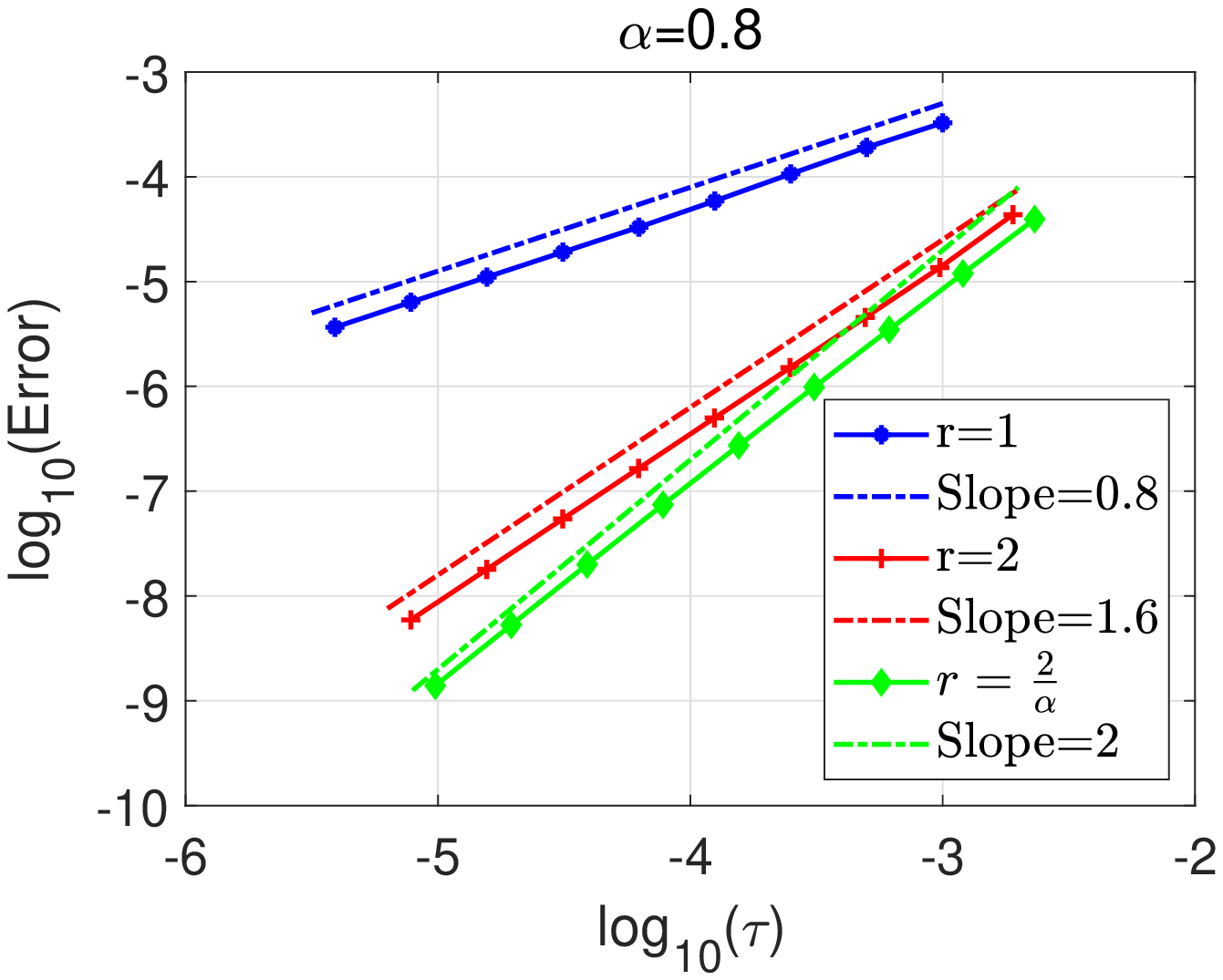}}
\centerline{}
\end{minipage}
\caption{(Example \ref{expl3}) Effect of the graded mesh parameter $r$ on the convergence rate of the L1-CN scheme (the first line)
and  L$1^{+}$-CN scheme (the second line) with $\varepsilon^{2}=0.01$.
}\label{fig3}
\end{figure*}

\begin{figure*}[htbp]
\begin{minipage}[t]{0.48\linewidth}
\centerline{\includegraphics[scale=0.5]{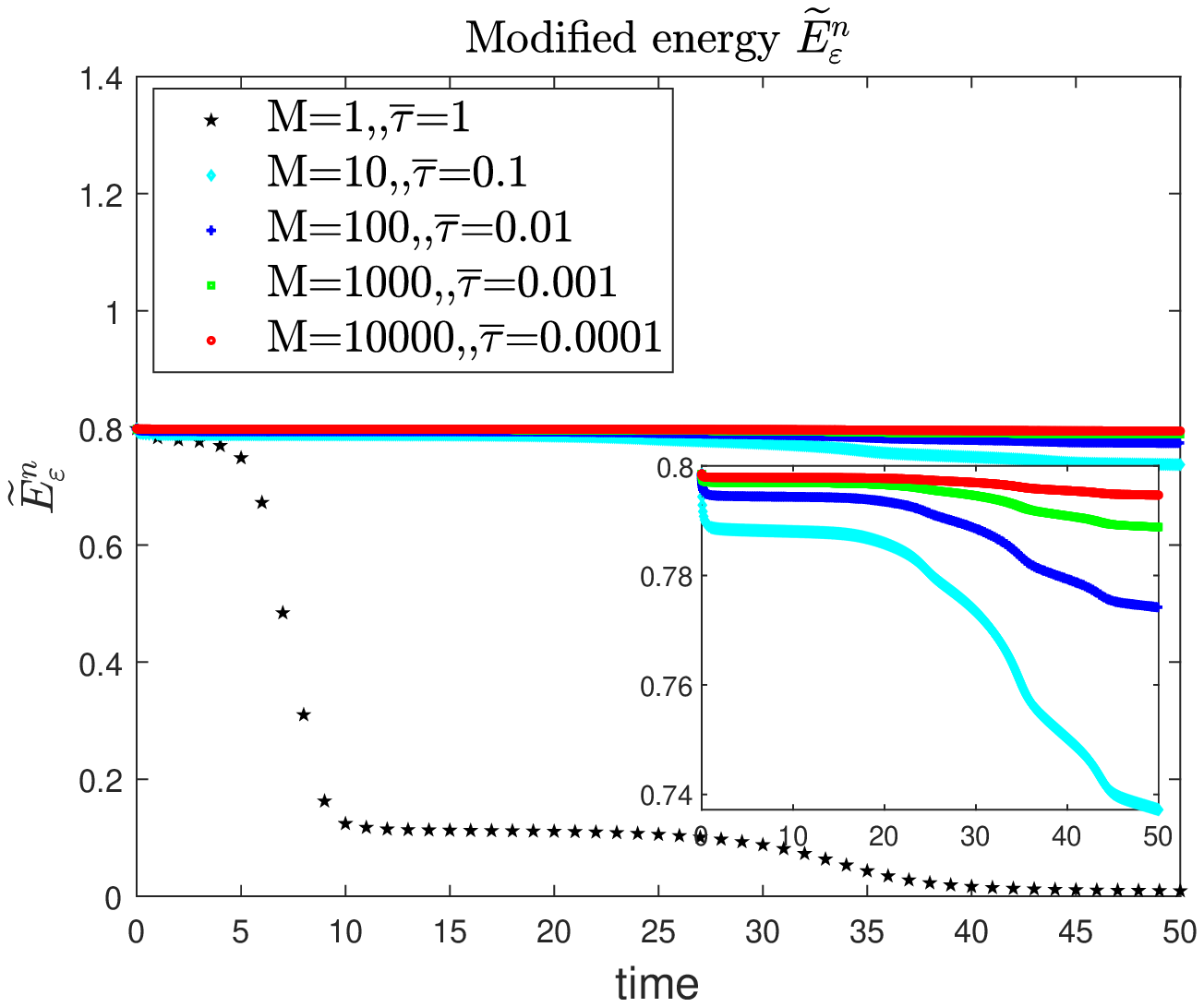}}
\centerline{}
\end{minipage}
\begin{minipage}[t]{0.48\linewidth}
\centerline{\includegraphics[scale=0.5]{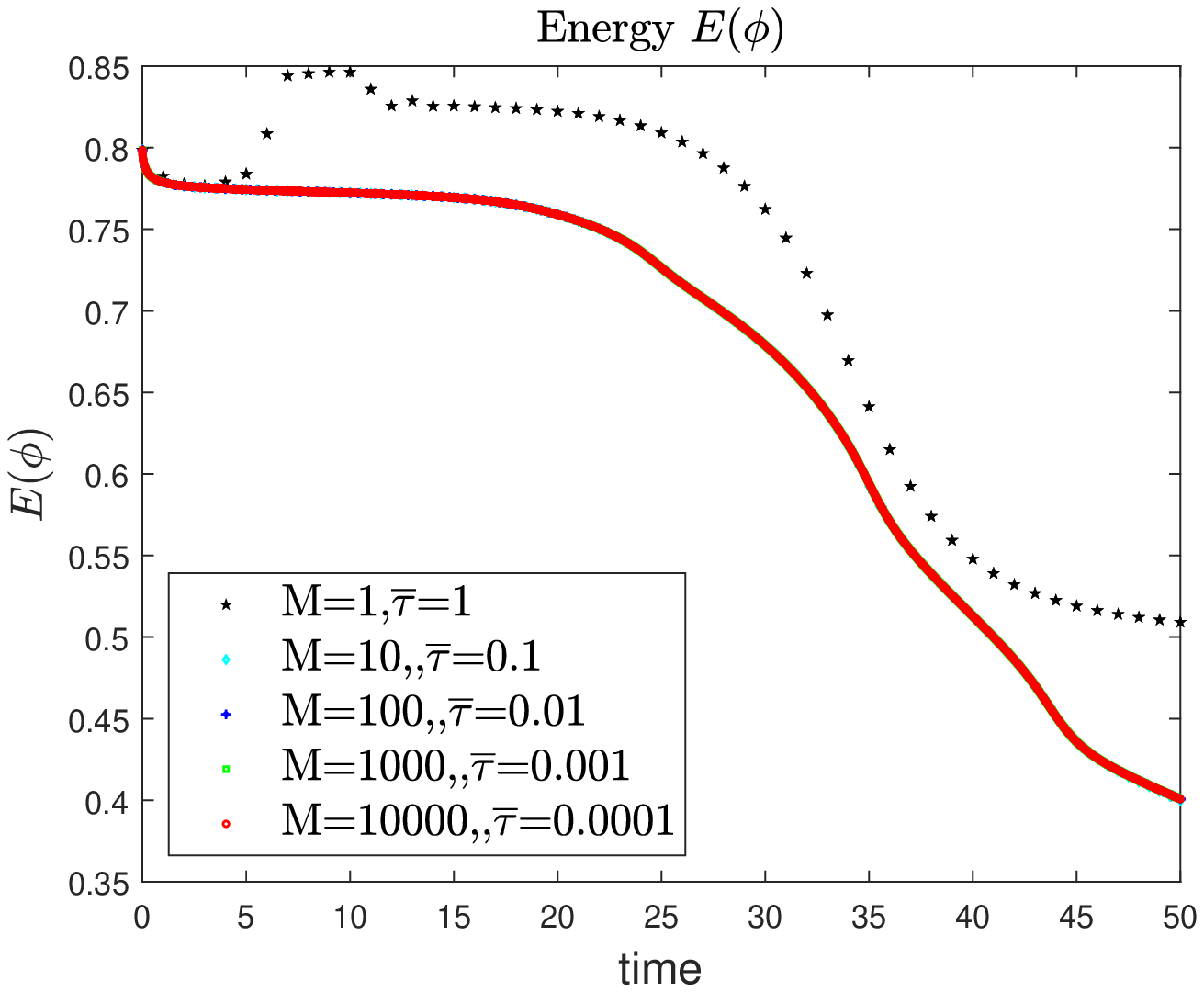}}
\end{minipage}
\vskip -3mm
\centerline{(a) L1-CN scheme with $\alpha=0.6$}
\vskip 6mm
\begin{minipage}[t]{0.48\linewidth}
\centerline{\includegraphics[scale=0.5]{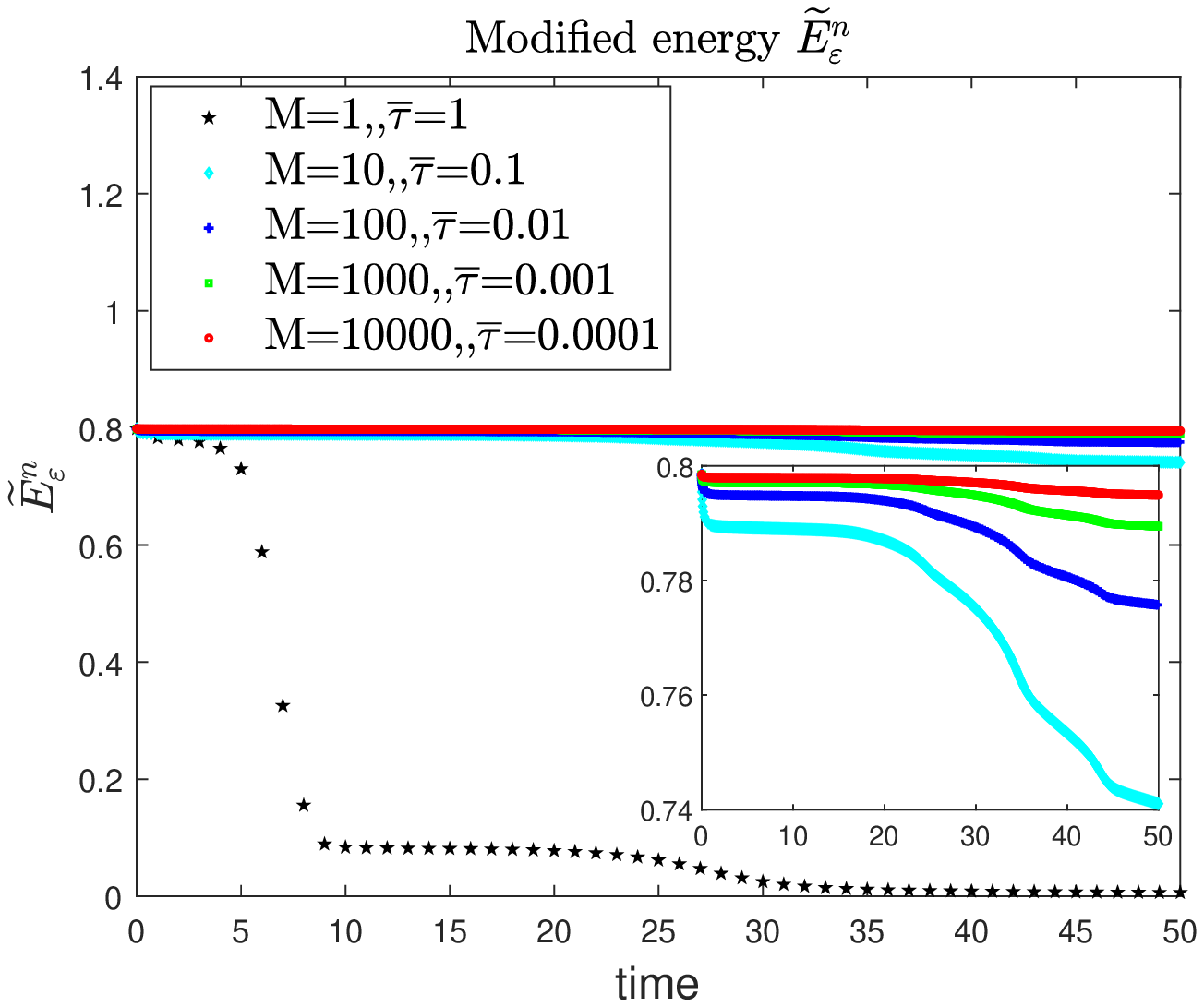}}
\centerline{}
\end{minipage}
\begin{minipage}[t]{0.48\linewidth}
\centerline{\includegraphics[scale=0.5]{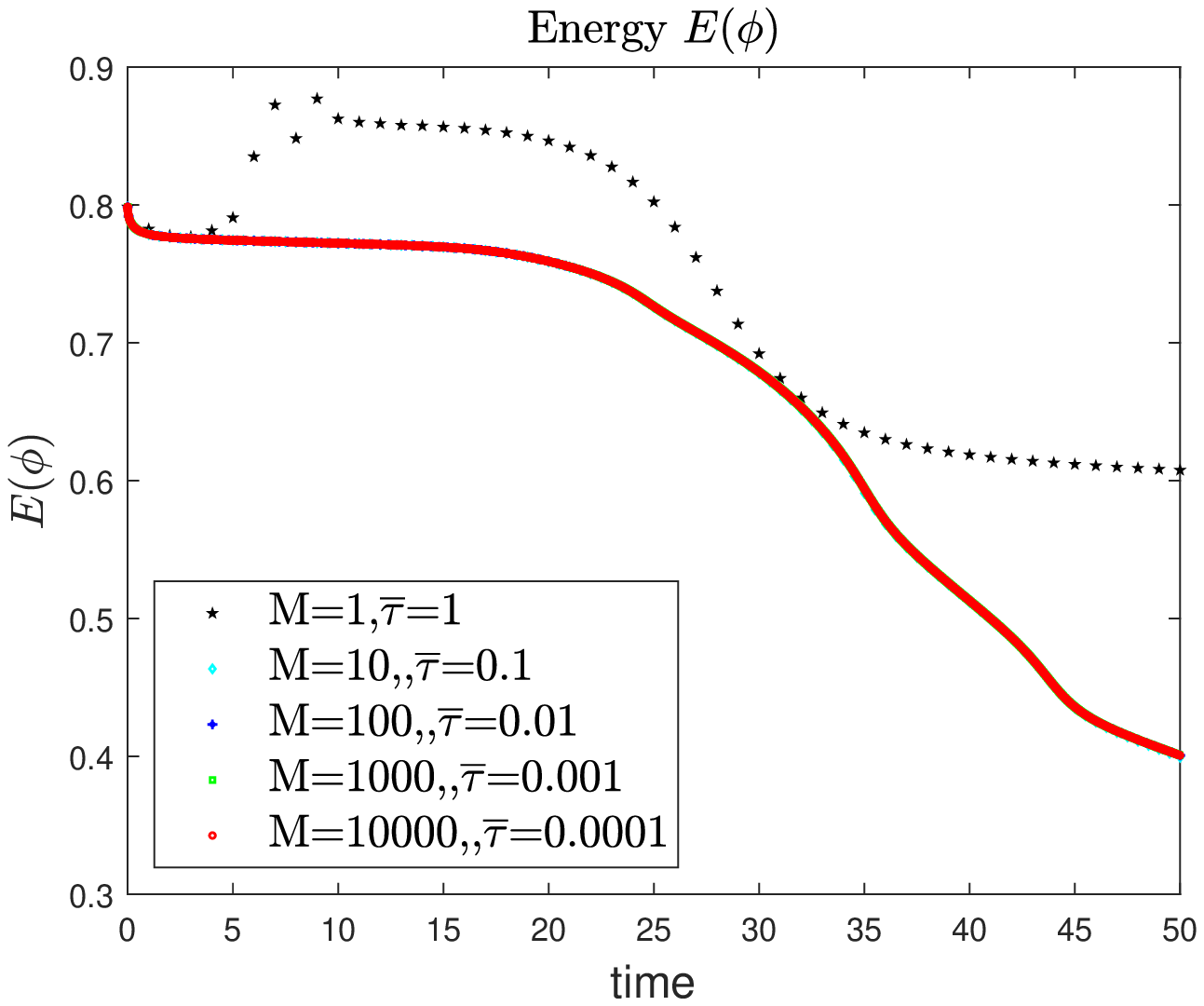}}
\end{minipage}
\vskip -3mm
\centerline{(b) L$1^+$-CN scheme with $\alpha=0.6$}
\caption{(Example \ref{expl3}) Evolution in time of the modified energy and original energy
computed by using different time step sizes.
}\label{fig4}
\end{figure*}

\subsection{Order sensibility of a benchmark problem}
In this test case, we intend to apply the proposed schemes to the interface moving problem
governed by the fractional Allen-Cahn equation in the domain
$(-32,32)\times(-32,32)$.
The initial state of the interface is the circle of the radius
$R_{0}=8$. It is known that the circle interface will shrink and eventually disappear due to the driving force.
This problem was also studied in \cite{HZX20} to verify the performance of the scheme proposed in that paper.
In the classical case, i.e., $\alpha=1$,
it was shown \cite{Yan16,Chen98} that
the radius $R(t)$ of the circle at the given time $t$ evolves as
$R(t)=\sqrt{R_{0}^{2}-2t}$.
That is, the ratio is monotonously decreasing and vanishes at $T=32$.
This problem has been frequently served as a benchmark to test the efficiency
of the numerical methods.



The problem is discretized by
the L1-CN time stepping scheme using the graded mesh with $r=\frac{2-\alpha}{\alpha}$ in the
subinterval $[0,1]$ and the uniform mesh in the subinterval $(1,T]$.
The Legendre spectral method in space uses polynomials of degree $128$ in each direction.
For comparison purposes the computation is performed with the same meshes for the same fractional orders $\alpha$ as in \cite{HZX20}.
The computed interface evolution is shown in Figure \ref{fig5}.
The computed $R^{2}$ and the free energy $E(\phi^{n})$ versus the time are also plotted
in Figure \ref{fig6}.
The interface movement shown in Figure \ref{fig5} is almost the same compared to
the results reported in \cite{HZX20}.
The agreement on $R^{2}$ and $E(\phi)$
between the current scheme and the one in \cite{HZX20} can be observed equally.
This demonstrates the efficiency of the both methods
proposed in \cite{HZX20} and in the present paper.
However, as we have already emphasized, the novelty of present work is the rigorous proof
of an energy dissipation law, both in the continuous and discrete cases,
not only in the uniform mesh but also in the graded mesh.

\begin{figure*}[htbp]
\begin{minipage}[t]{0.19\linewidth}
\centerline{\includegraphics[width=3.5cm,height=3.5cm]{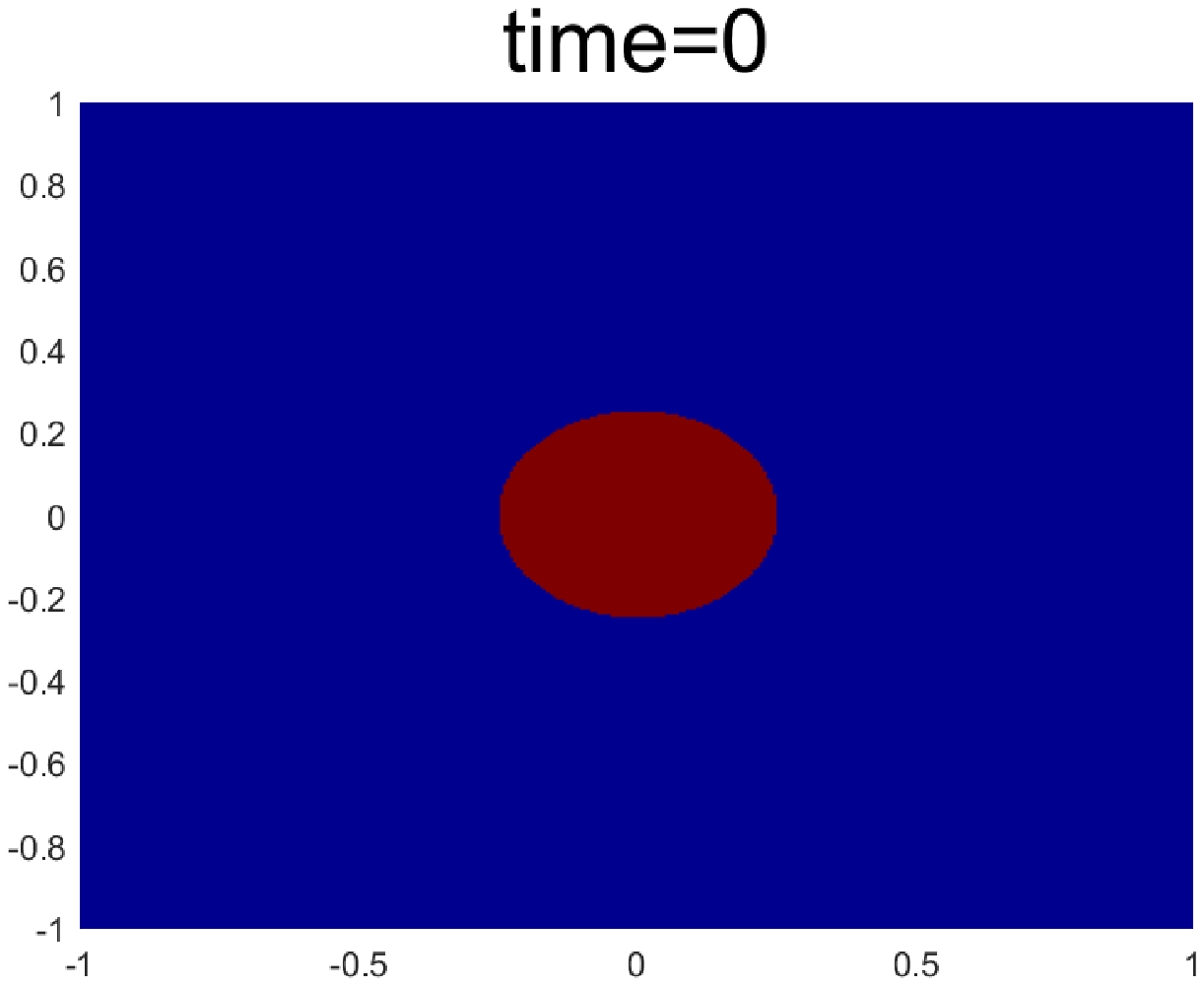}}
\centerline{}
\end{minipage}
\begin{minipage}[t]{0.19\linewidth}
\centerline{\includegraphics[width=3.5cm,height=3.5cm]{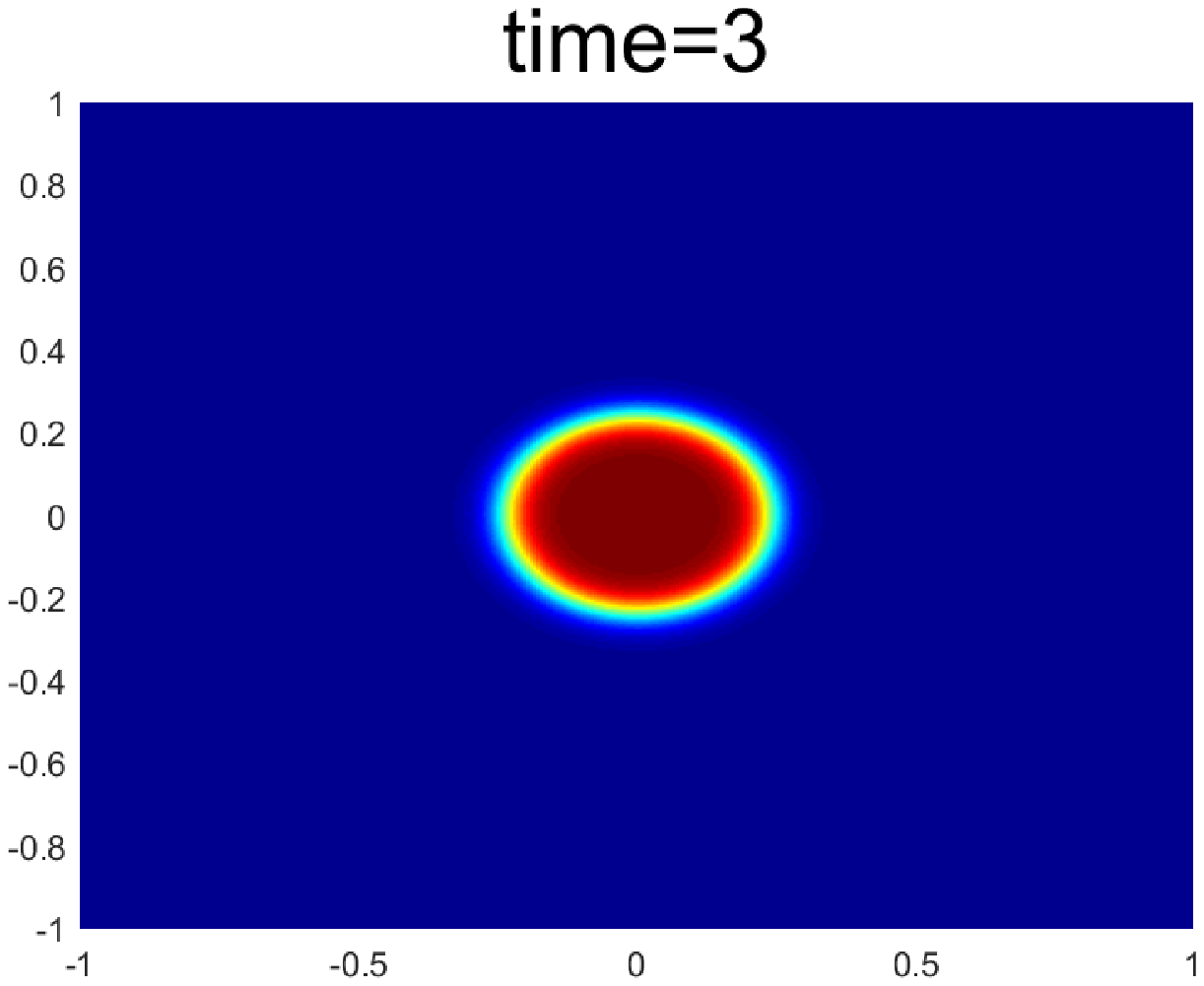}}
\centerline{}
\end{minipage}
\begin{minipage}[t]{0.19\linewidth}
\centerline{\includegraphics[width=3.5cm,height=3.5cm]{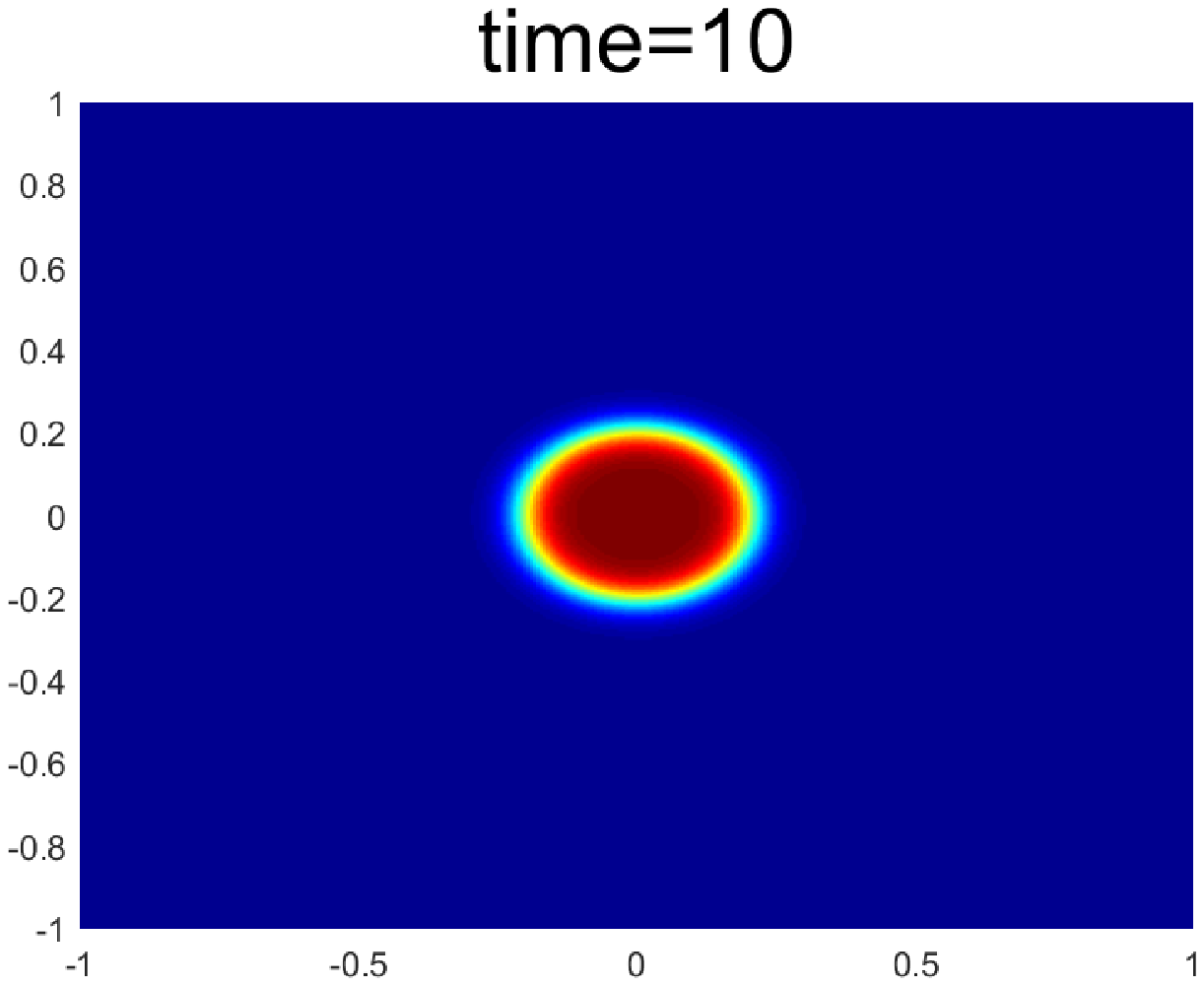}}
\centerline{ (a) $\alpha=1$ and ${\triangle t}=0.01$}
\end{minipage}
\begin{minipage}[t]{0.19\linewidth}
\centerline{\includegraphics[width=3.5cm,height=3.5cm]{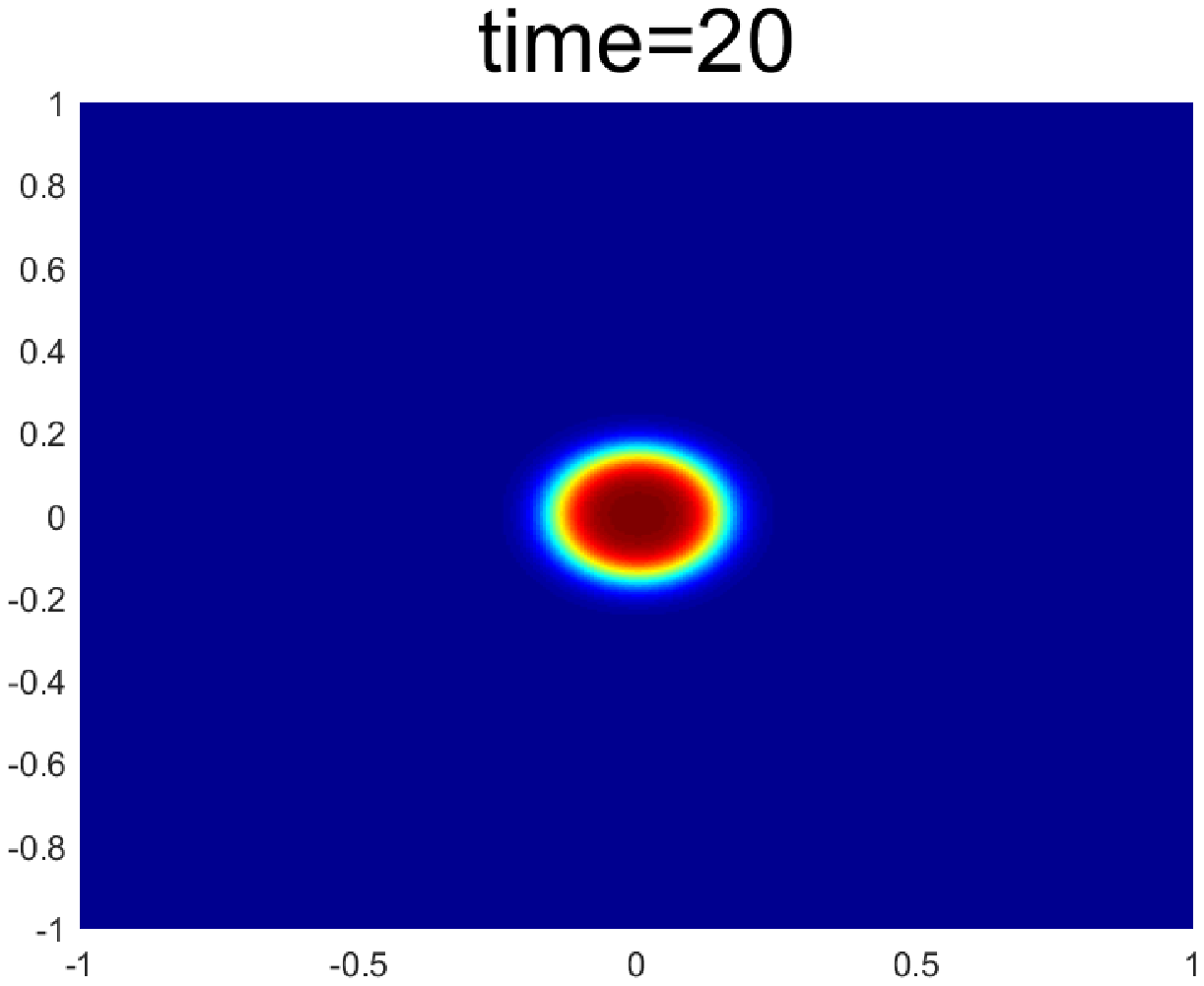}}
\centerline{}
\end{minipage}
\begin{minipage}[t]{0.19\linewidth}
\centerline{\includegraphics[width=3.5cm,height=3.5cm]{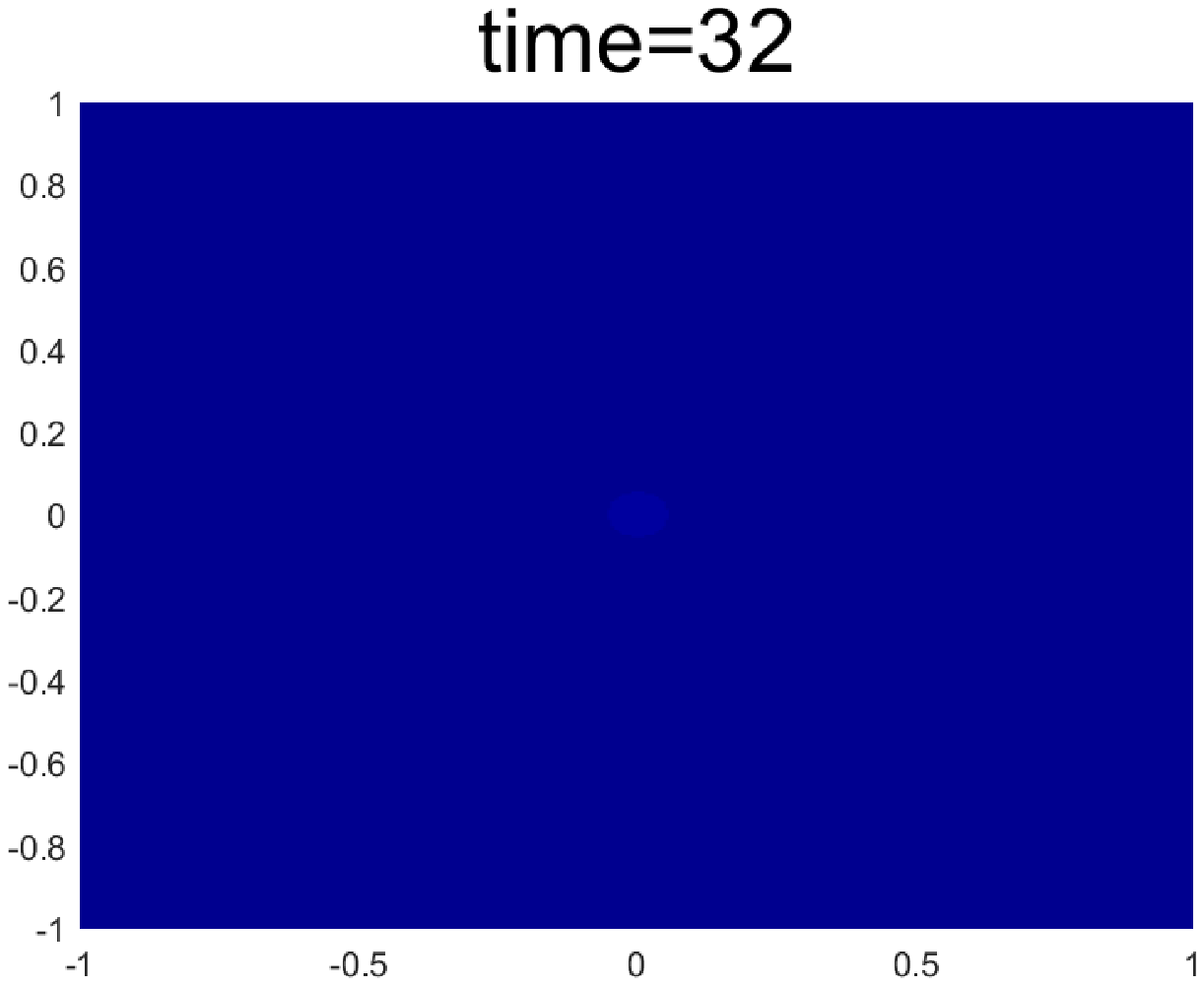}}
\centerline{}
\end{minipage}
\vskip 3mm
\begin{minipage}[t]{0.19\linewidth}
\centerline{\includegraphics[width=3.5cm,height=3.5cm]{FAC4_1.eps}}
\centerline{}
\end{minipage}
\begin{minipage}[t]{0.19\linewidth}
\centerline{\includegraphics[width=3.5cm,height=3.5cm]{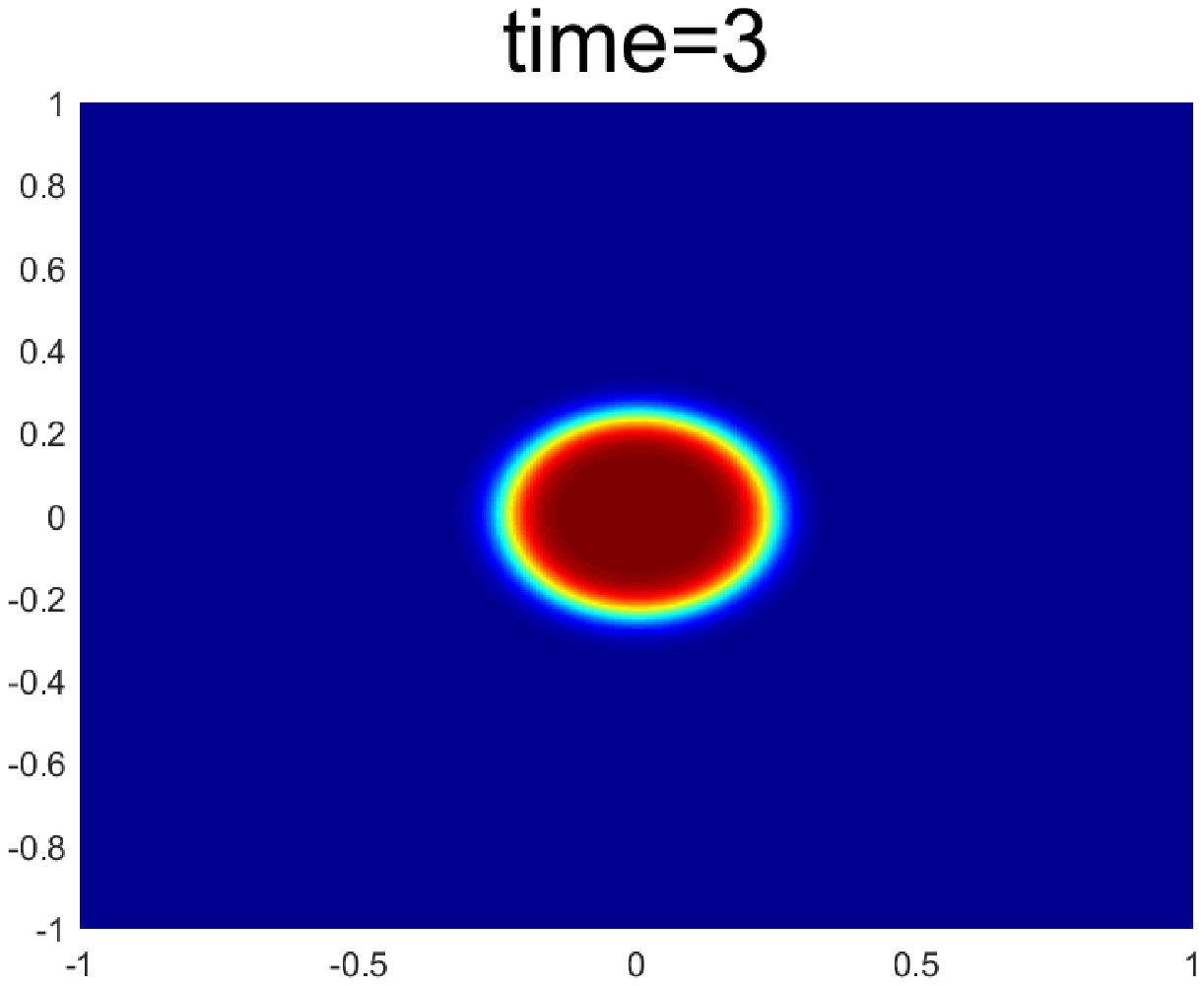}}
\centerline{}
\end{minipage}
\begin{minipage}[t]{0.19\linewidth}
\centerline{\includegraphics[width=3.5cm,height=3.5cm]{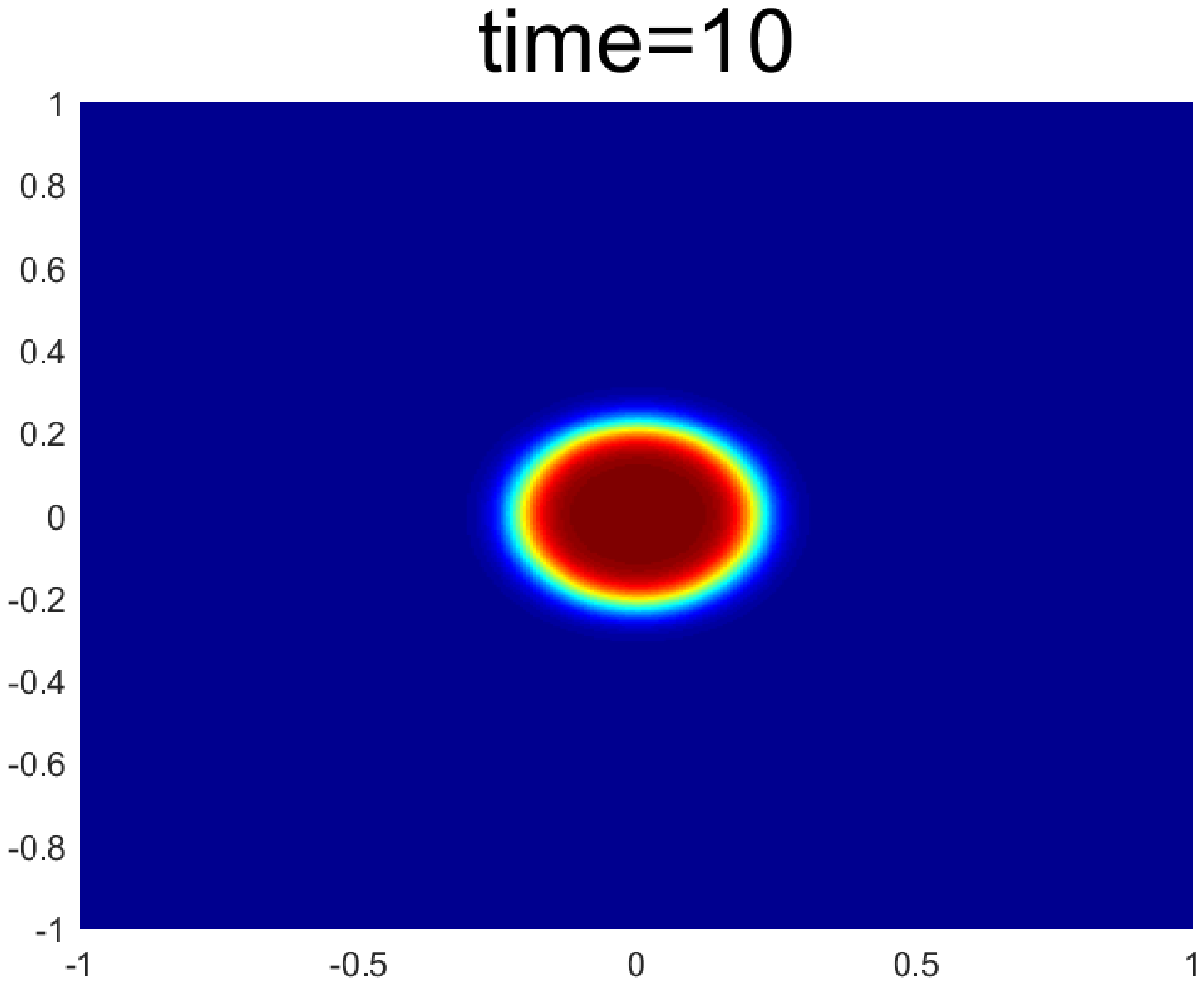}}
\centerline{ (b) $\alpha=0.9, M=100$, and ${\triangle t}=0.01$}
\end{minipage}
\begin{minipage}[t]{0.19\linewidth}
\centerline{\includegraphics[width=3.5cm,height=3.5cm]{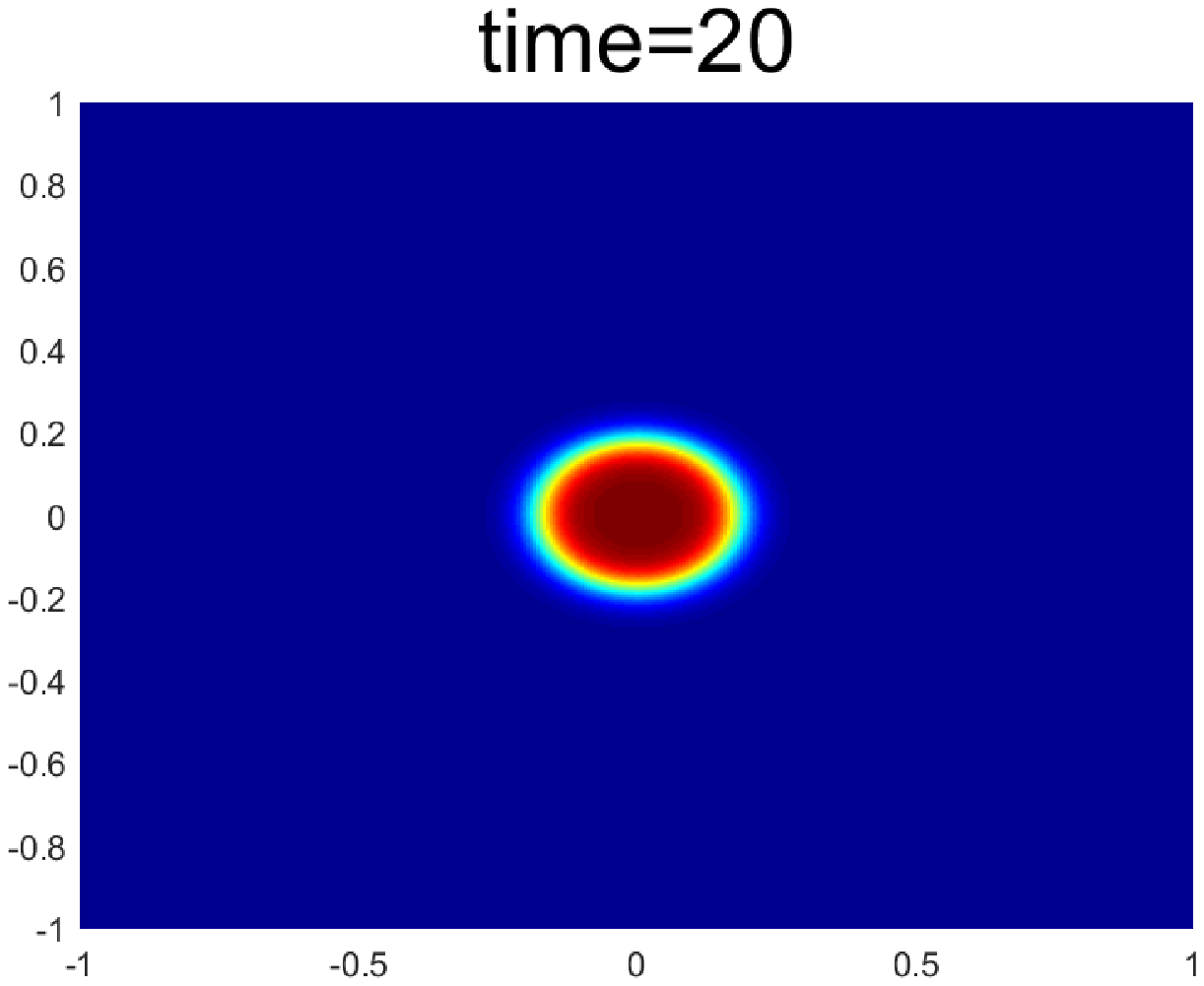}}
\centerline{}
\end{minipage}
\begin{minipage}[t]{0.19\linewidth}
\centerline{\includegraphics[width=3.5cm,height=3.5cm]{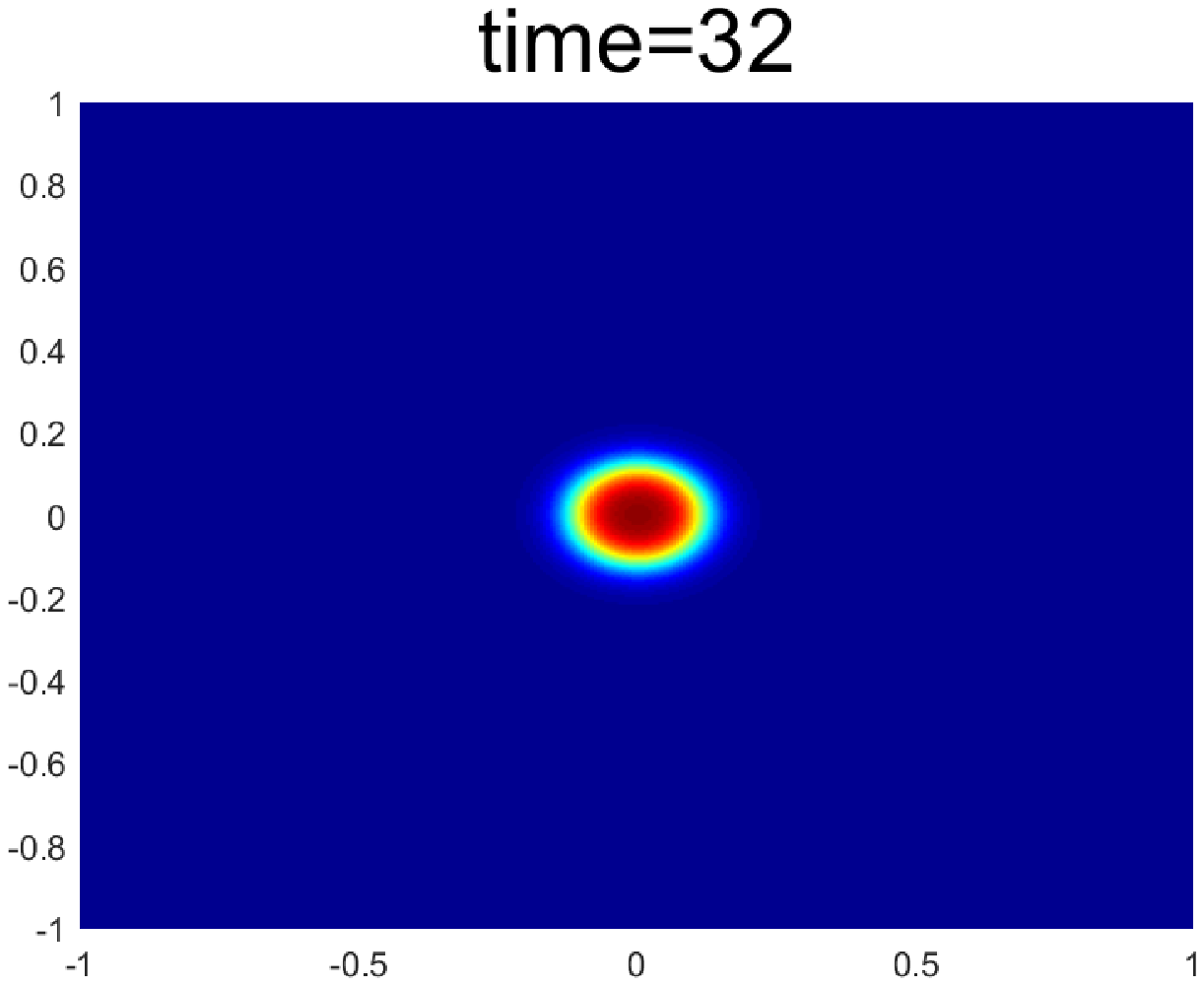}}
\centerline{}
\end{minipage}
\vskip 3mm
\begin{minipage}[t]{0.19\linewidth}
\centerline{\includegraphics[width=3.5cm,height=3.5cm]{FAC4_1.eps}}
\centerline{}
\end{minipage}
\begin{minipage}[t]{0.19\linewidth}
\centerline{\includegraphics[width=3.5cm,height=3.5cm]{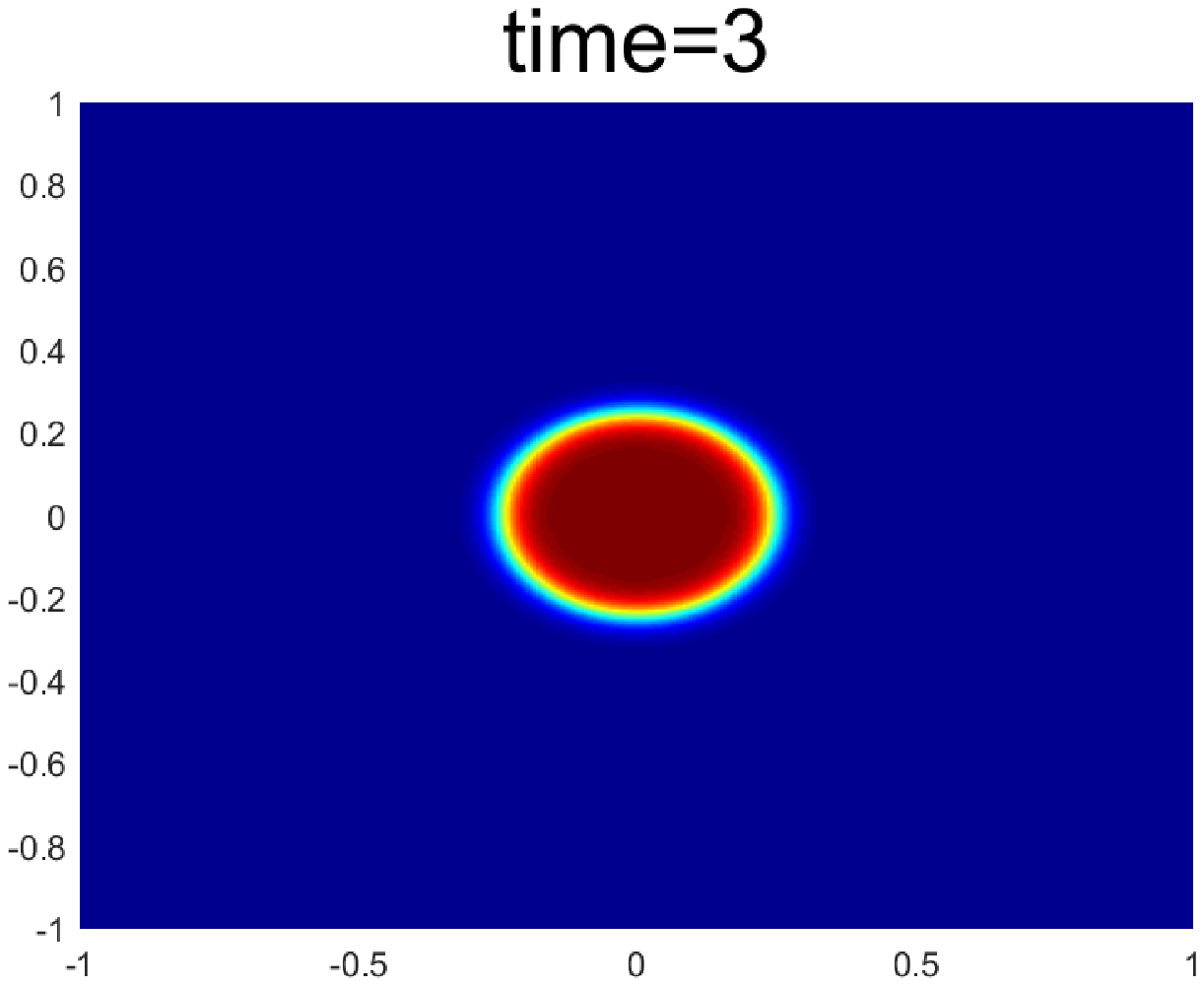}}
\centerline{}
\end{minipage}
\begin{minipage}[t]{0.19\linewidth}
\centerline{\includegraphics[width=3.5cm,height=3.5cm]{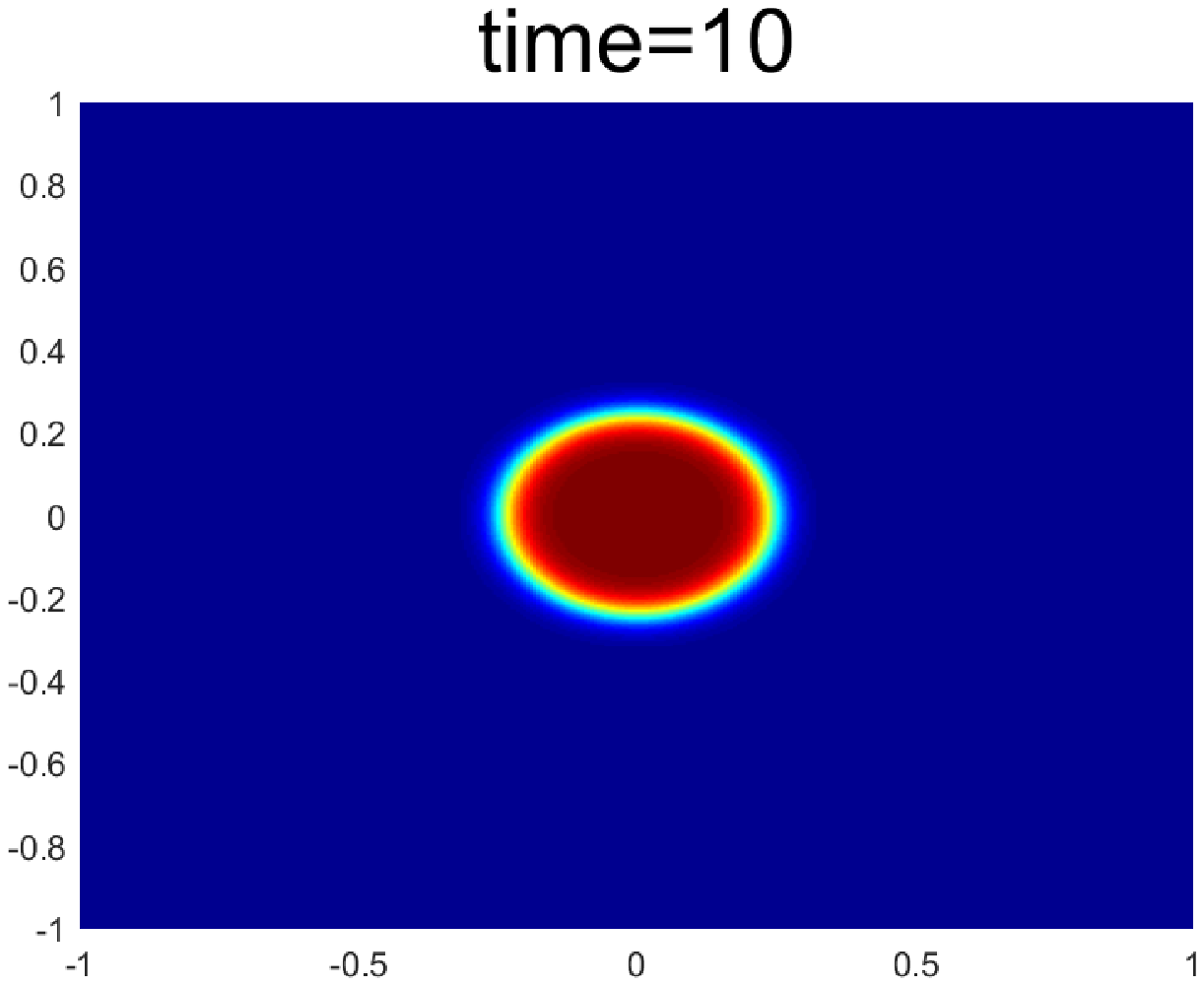}}
\centerline{ (c) $\alpha=0.4, M=1000$, and ${\triangle t}=0.01$}
\end{minipage}
\begin{minipage}[t]{0.19\linewidth}
\centerline{\includegraphics[width=3.5cm,height=3.5cm]{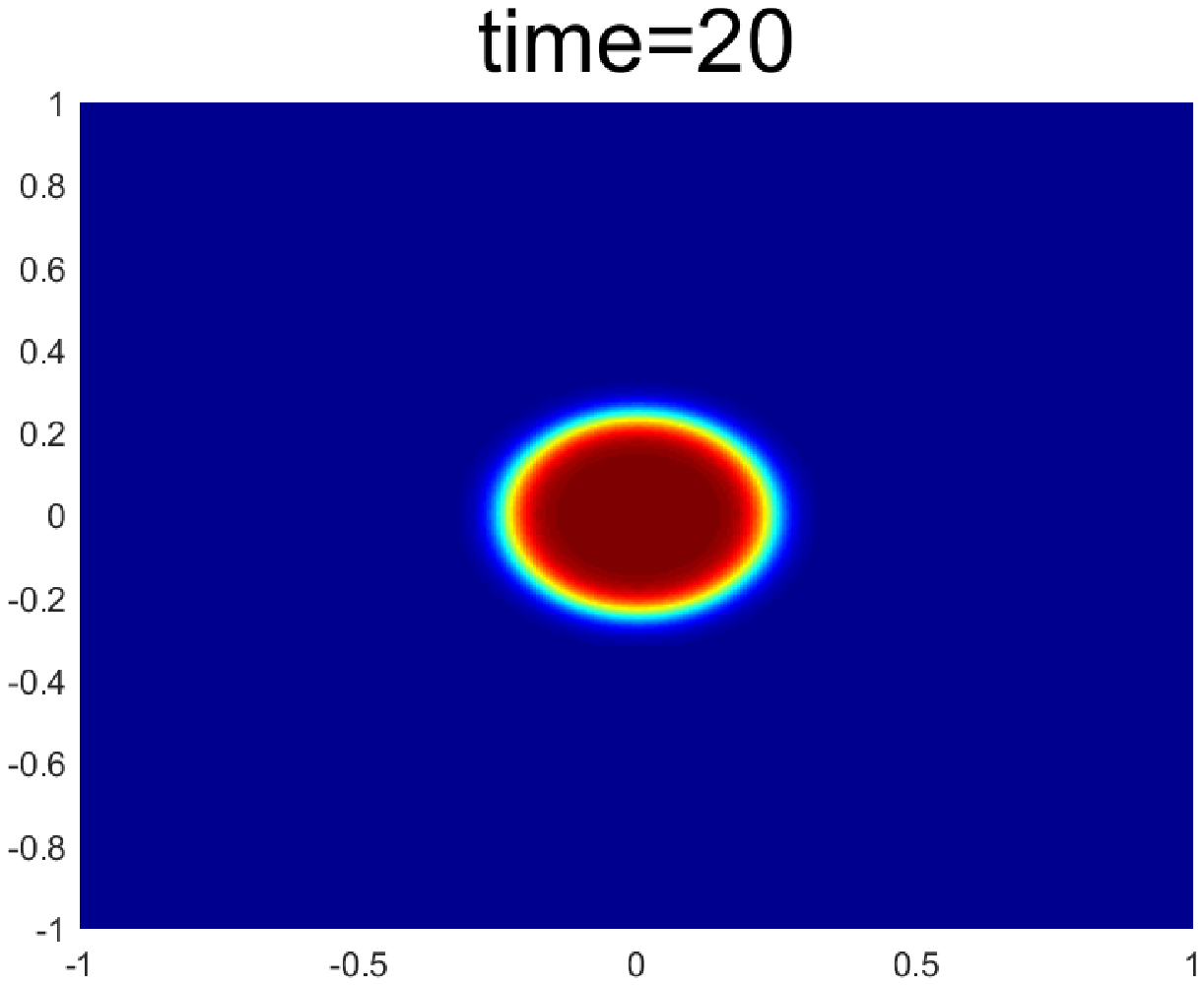}}
\centerline{}
\end{minipage}
\begin{minipage}[t]{0.19\linewidth}
\centerline{\includegraphics[width=3.5cm,height=3.5cm]{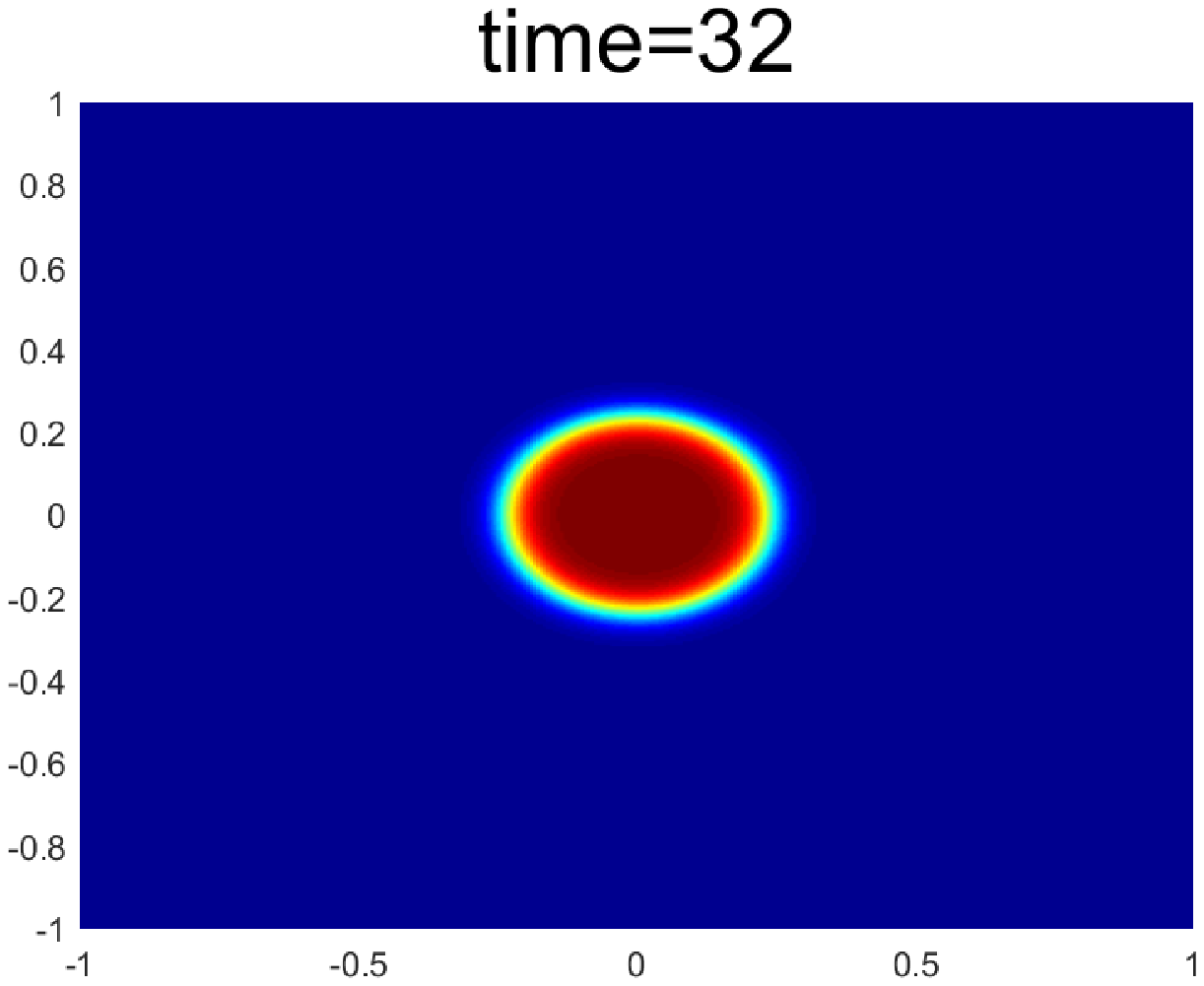}}
\centerline{}
\end{minipage}
\caption{Snapshots of the interface evolution simulated by using the L1-CN scheme with $C_0=1000$ for $\alpha=1, 0.9$, and $0.4$.
}\label{fig5}
\end{figure*}

\begin{figure*}[htbp]
\begin{minipage}[t]{0.49\linewidth}
\centerline{\includegraphics[scale=0.55]{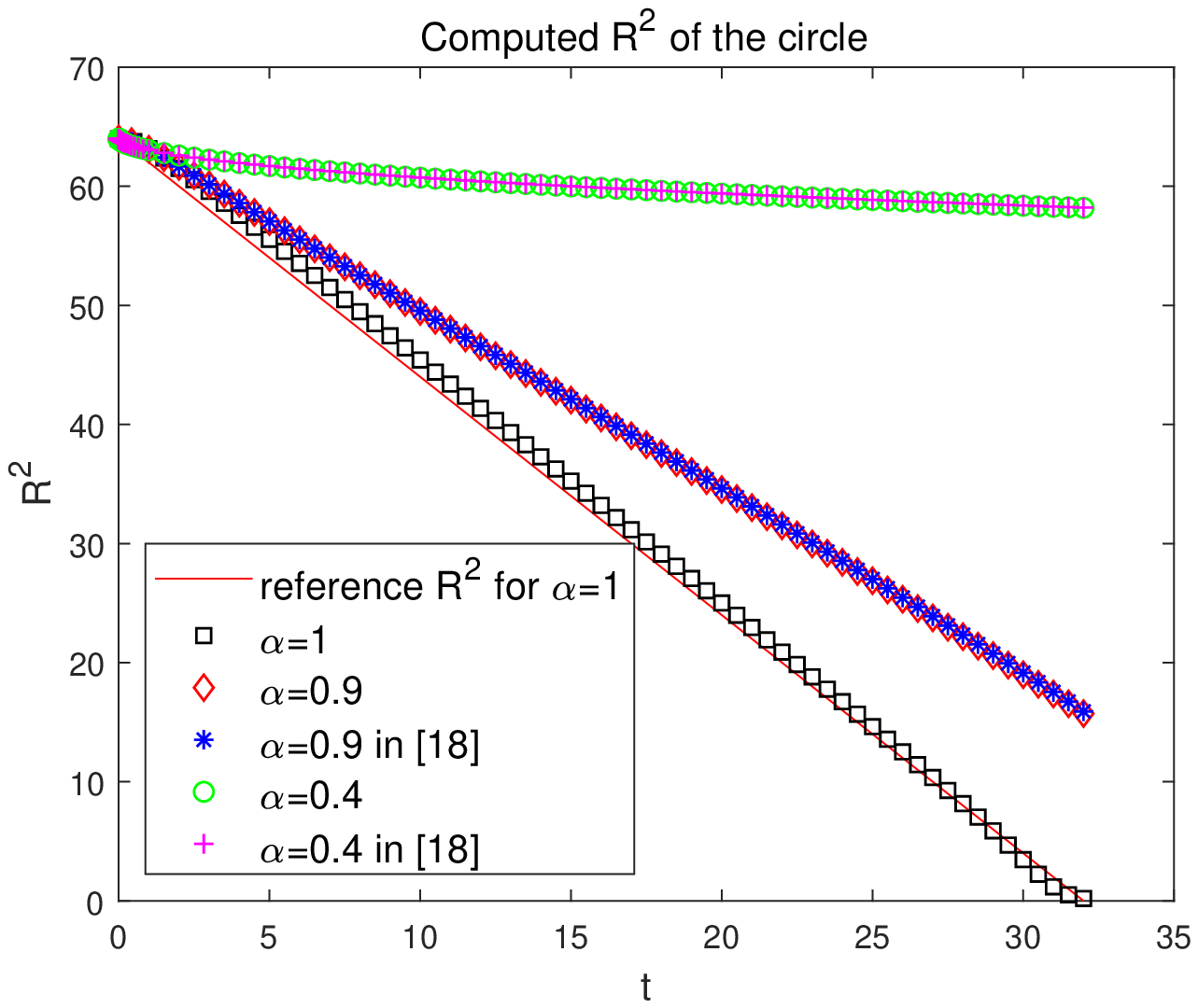}}
\centerline{(a) Interface radius as functions of the time.}
\end{minipage}
\begin{minipage}[t]{0.49\linewidth}
\centerline{\includegraphics[scale=0.55]{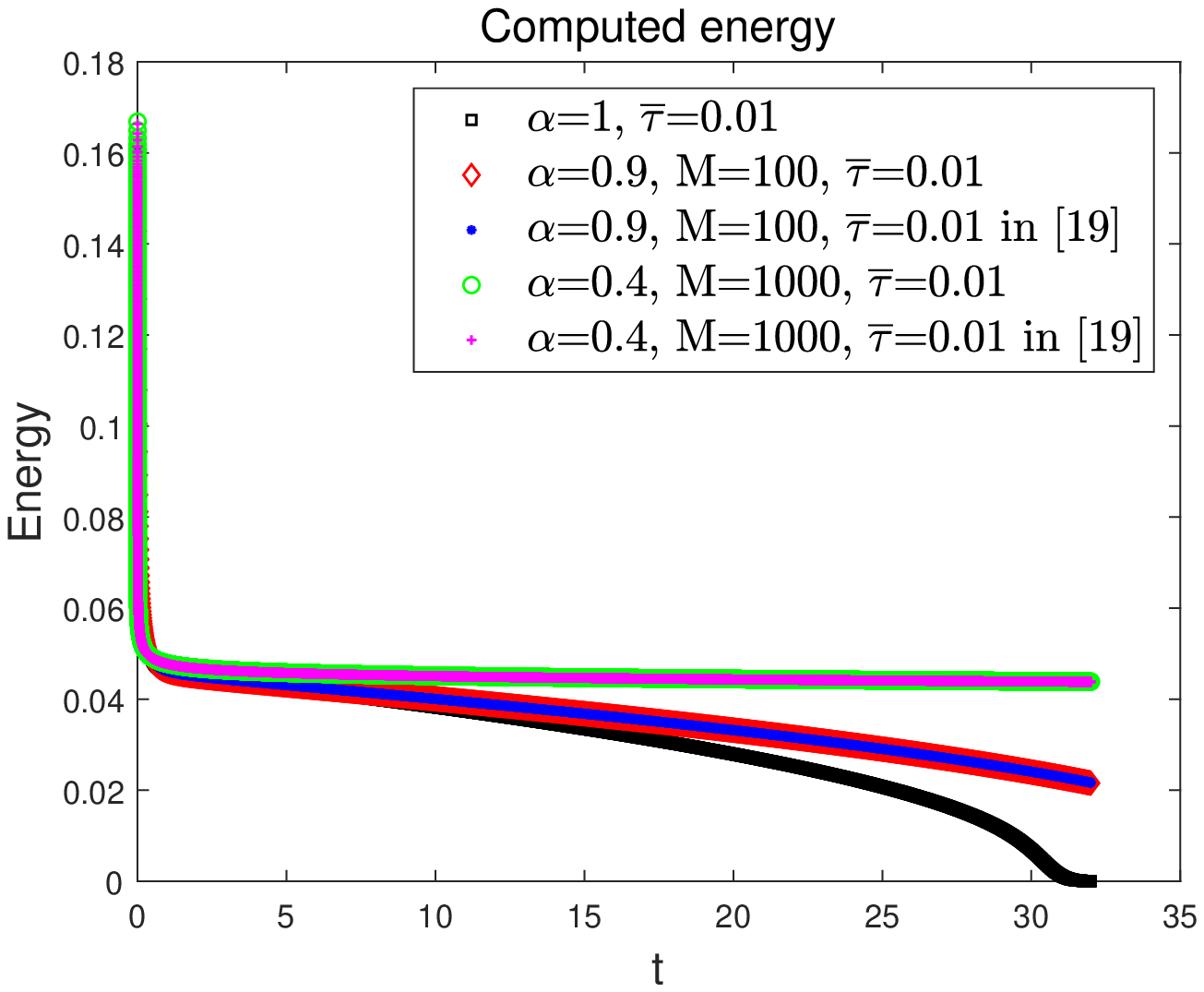}}
\centerline{(b) Free energy dissipation.}
\end{minipage}
\caption{Comparison of the computed $R^{2}$ and total free energy computed between the L1-CN scheme and the scheme in \cite{HZX20}.
}\label{fig6}
\end{figure*}

\subsection{Coarsening dynamics}

Finally we test the L$1^{+}$-CN scheme \eqref{L1p_CN1} to the two-phase coarsening problem,
by solving the fractional Allen-Cahn equation with $\varepsilon^{2}=0.001$ in $(-1,1)^{2}$. The
initial condition is a random data, same as used in \cite{HZX20}.
The simulation is performed in the graded mesh with $r=\frac{2}{\alpha}, M=100$ in
$[0,1]$ and the uniform mesh with {\color{black}time step size ${\triangle t}=0.01$} in $(1,T]$.
The spatial spectral approximation uses $128\times128$ basis functions.
Figure \ref{fig7} presents some snapshots of the simulated phase function and the computed free energy $E(\phi)$ versus time for a number of fractional orders $\alpha=1, 0.9, 0.7$, and $0.5$.
It is observed that the results are not sensitive to the fractional orders at the early stage as
there is no distinguishable difference on the isoline plots among different fractional orders before $t=5$.
After that, the solutions apparently start to deviate, and develop into long time phase coarsening.
Note that similar results and interpretation of these results have been presented
in \cite{HZX20}.
The computed free energy $E(\phi^{n})$ shown in the last row figures
is also in a good agreement with the result reported in \cite{HZX20}.

\begin{figure*}[htbp]
  \begin{center}
    \begin{tabular}{C{0.9cm}cccc}
    &$\alpha=1$&$\alpha=0.9$&$\alpha=0.7$&$\alpha=0.5$\\
\tabincell{c}{$t=0$\\ \\ \\ \\ \\ \\ \\}
&\begin{minipage}[t]{0.2\linewidth}
\includegraphics[scale=0.22]{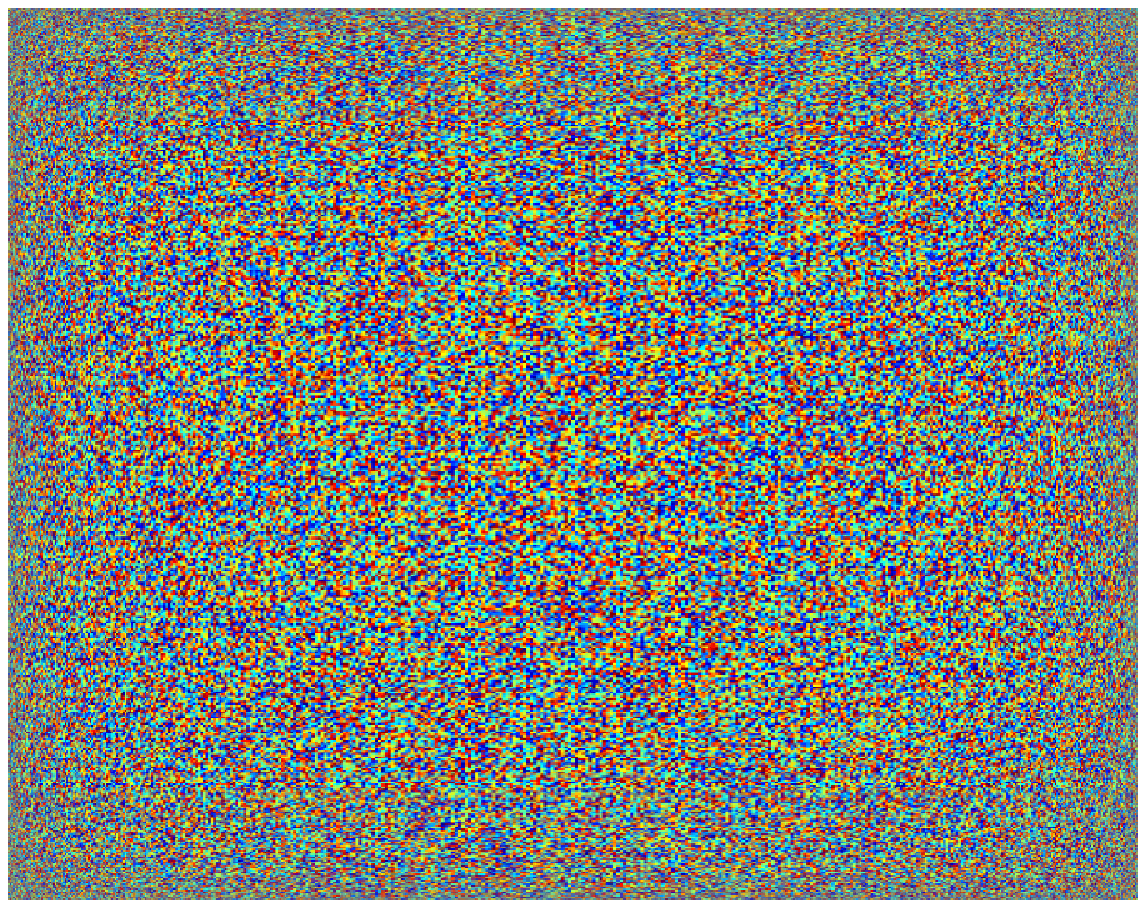}
\end{minipage}&
\begin{minipage}[t]{0.2\linewidth}
\includegraphics[scale=0.22]{FAC5_1.eps}
\end{minipage}&
\begin{minipage}[t]{0.2\linewidth}
\includegraphics[scale=0.22]{FAC5_1.eps}
\end{minipage}
&\begin{minipage}[t]{0.2\linewidth}
\includegraphics[scale=0.22]{FAC5_1.eps}
\end{minipage}\\
\specialrule{0em}{-13mm}{.001pt}
\tabincell{c}{$t=5$\\ \\ \\ \\ \\ \\ \\}
&\begin{minipage}[t]{0.2\linewidth}
\includegraphics[scale=0.22]{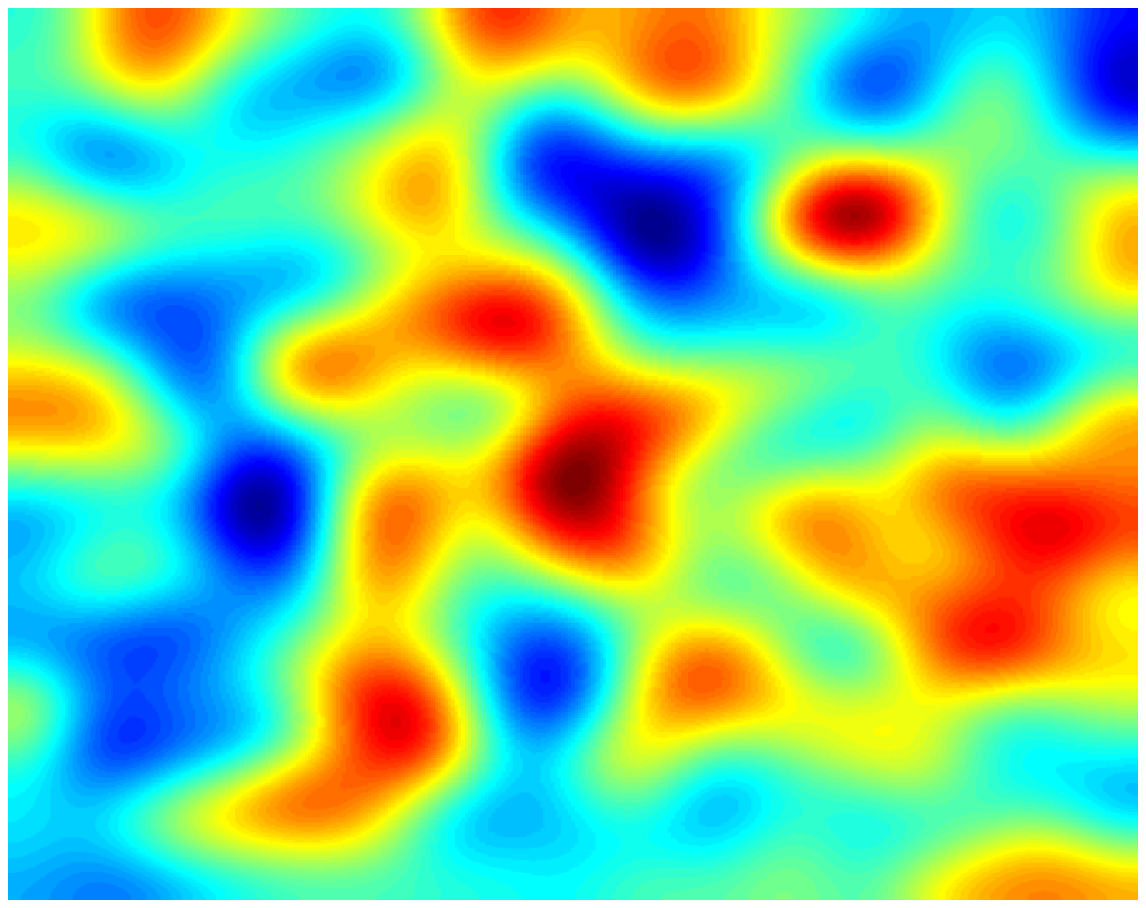}
\end{minipage}&
\begin{minipage}[t]{0.2\linewidth}
\includegraphics[scale=0.22]{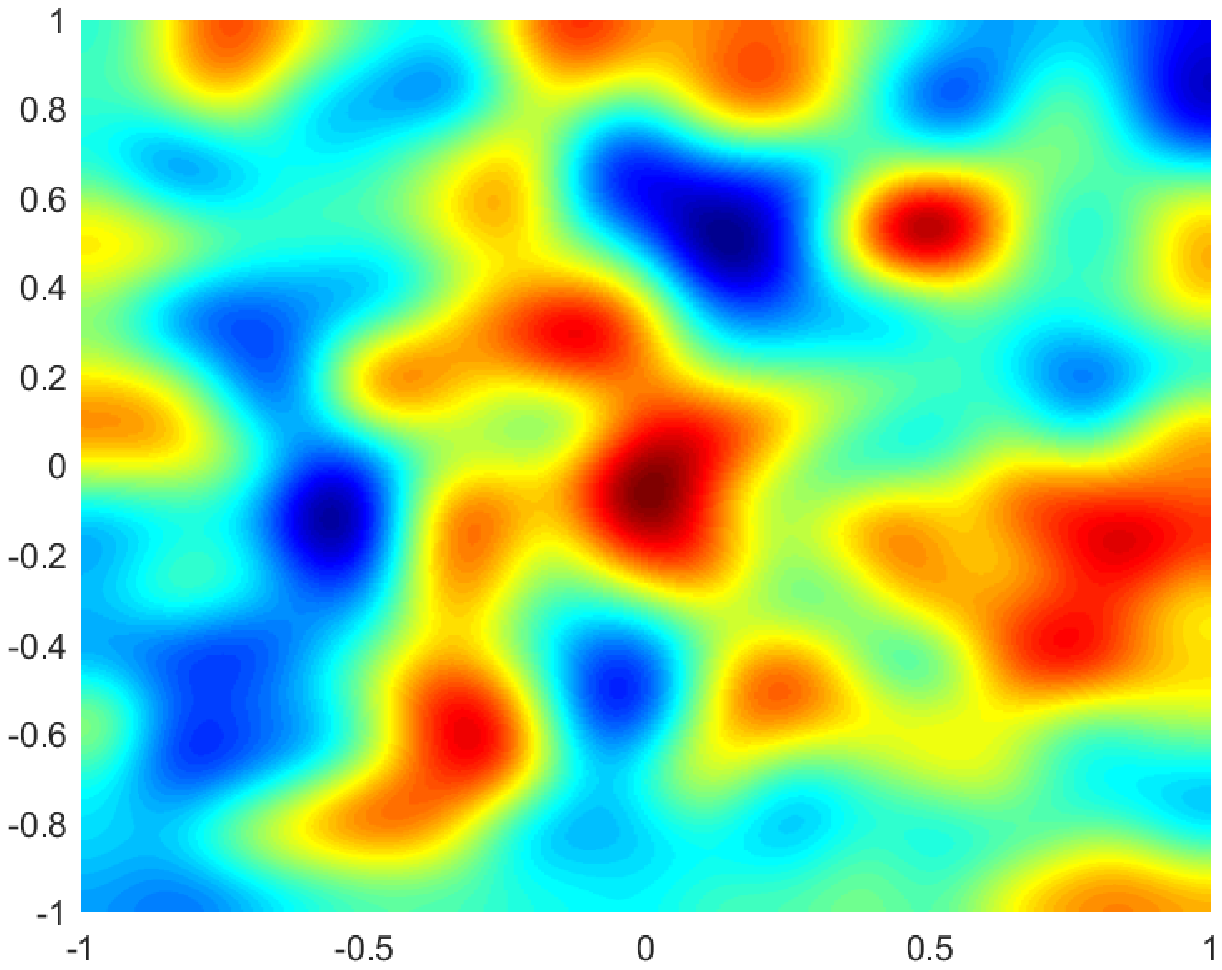}
\end{minipage}&
\begin{minipage}[t]{0.2\linewidth}
\includegraphics[scale=0.22]{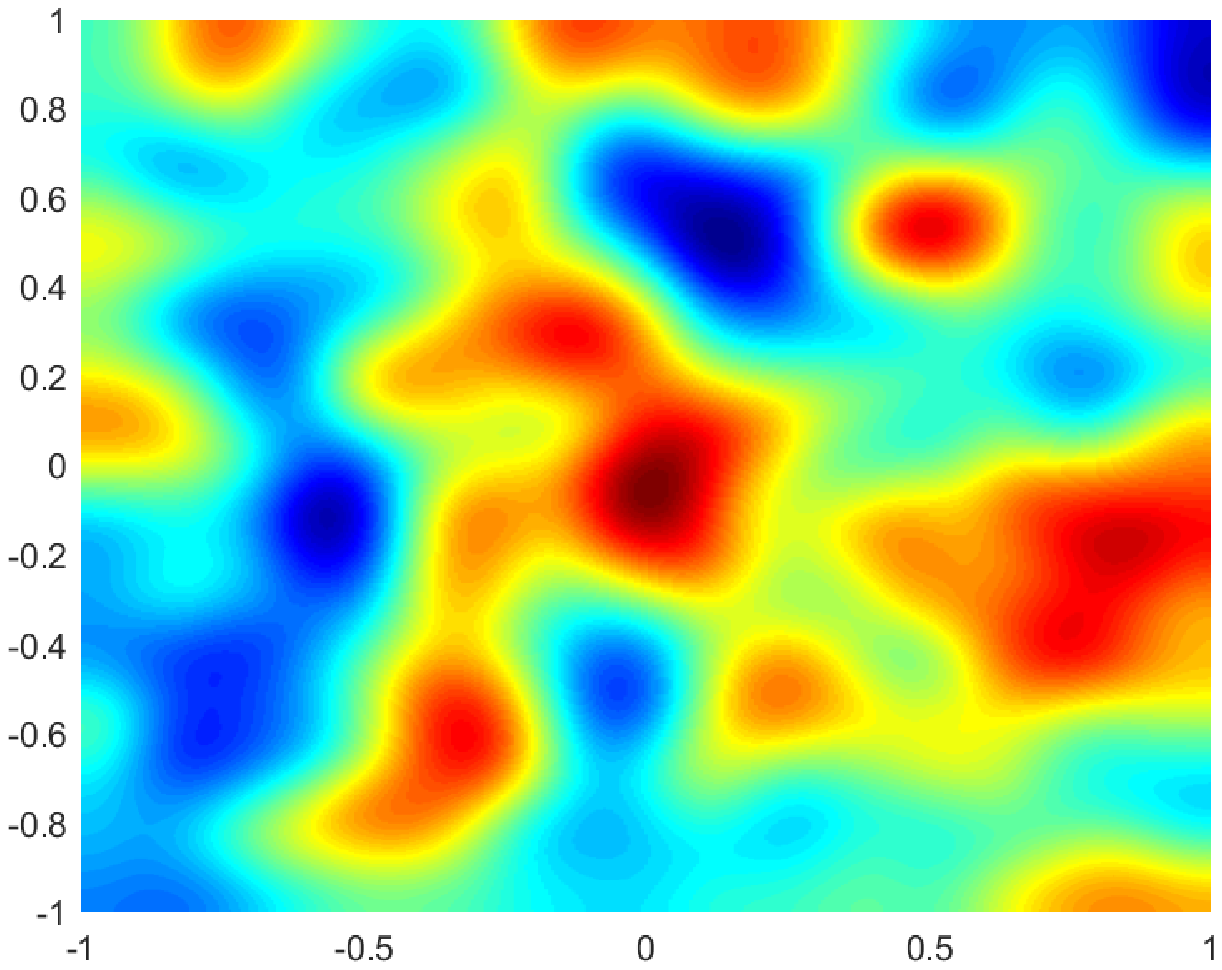}
\end{minipage}
&\begin{minipage}[t]{0.2\linewidth}
\includegraphics[scale=0.22]{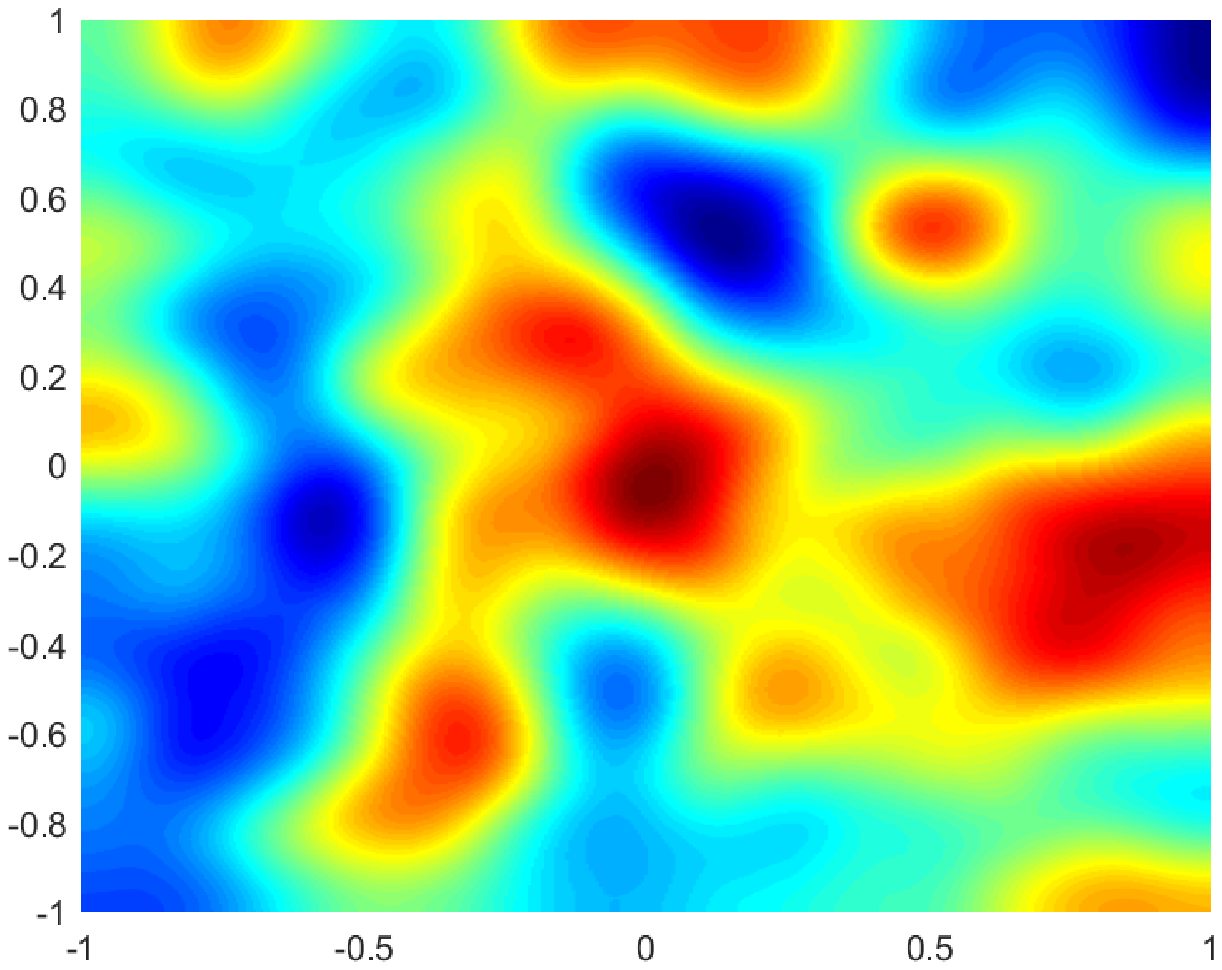}
\end{minipage}\\
\specialrule{0em}{-15mm}{.001pt}
\tabincell{c}{$t=20$\\ \\ \\ \\ \\ \\ \\}
&\begin{minipage}[t]{0.2\linewidth}
\includegraphics[scale=0.22]{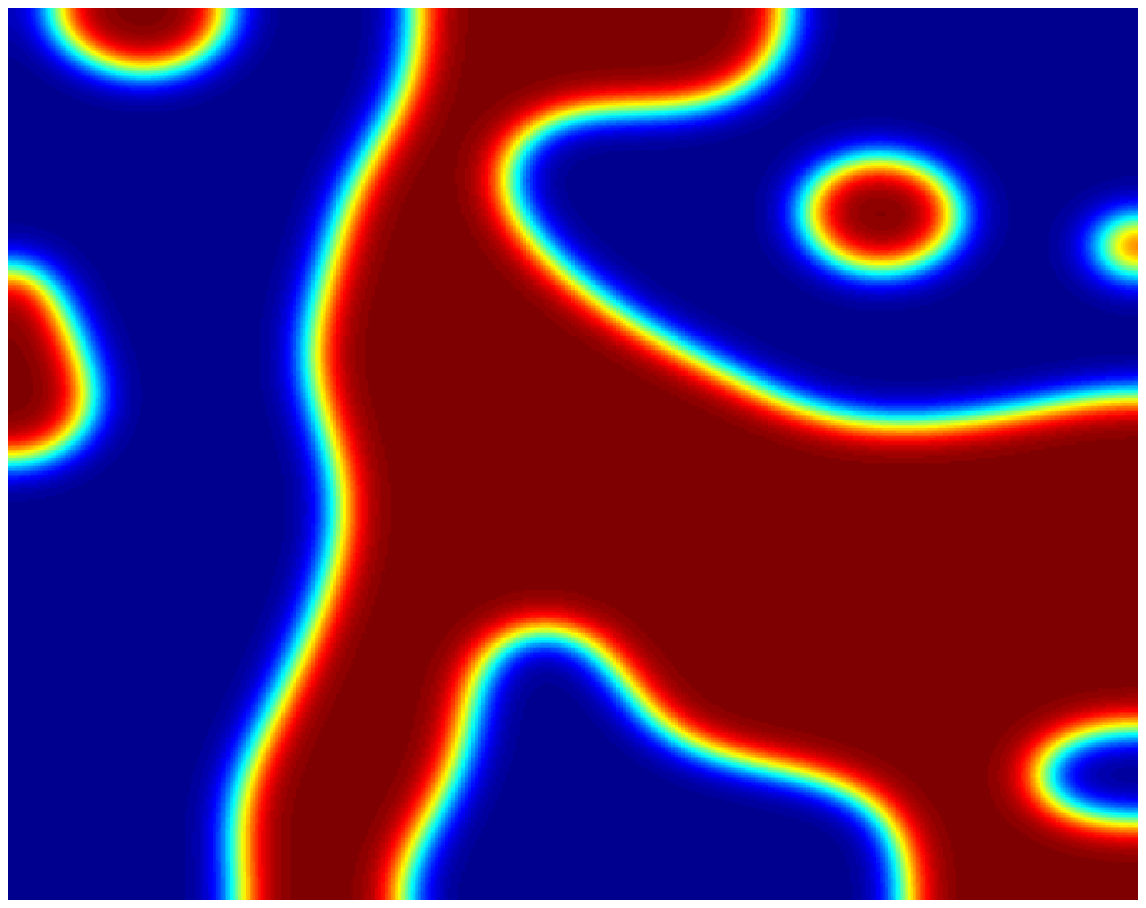}
\end{minipage}&
\begin{minipage}[t]{0.2\linewidth}
\includegraphics[scale=0.22]{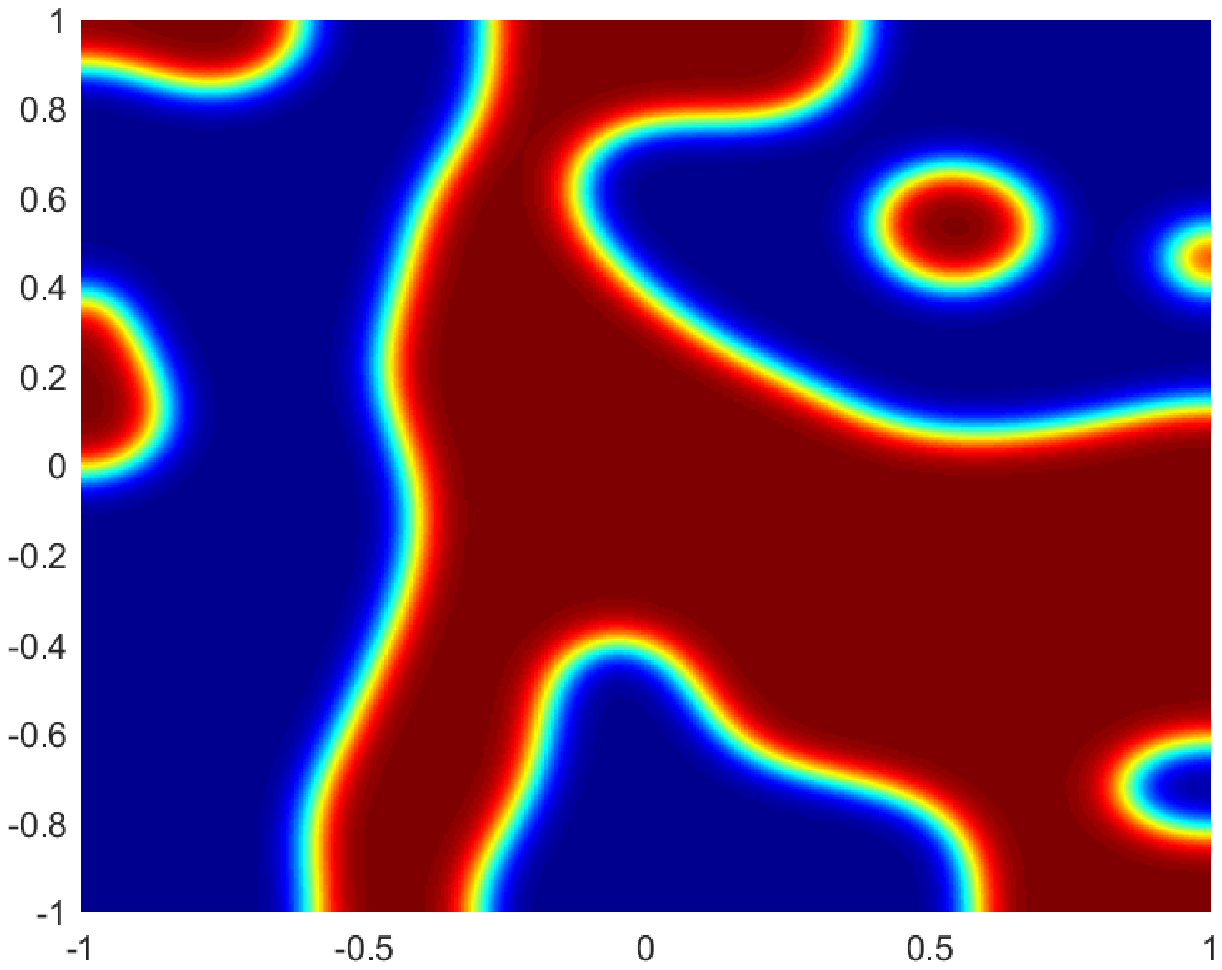}
\end{minipage}&
\begin{minipage}[t]{0.2\linewidth}
\includegraphics[scale=0.22]{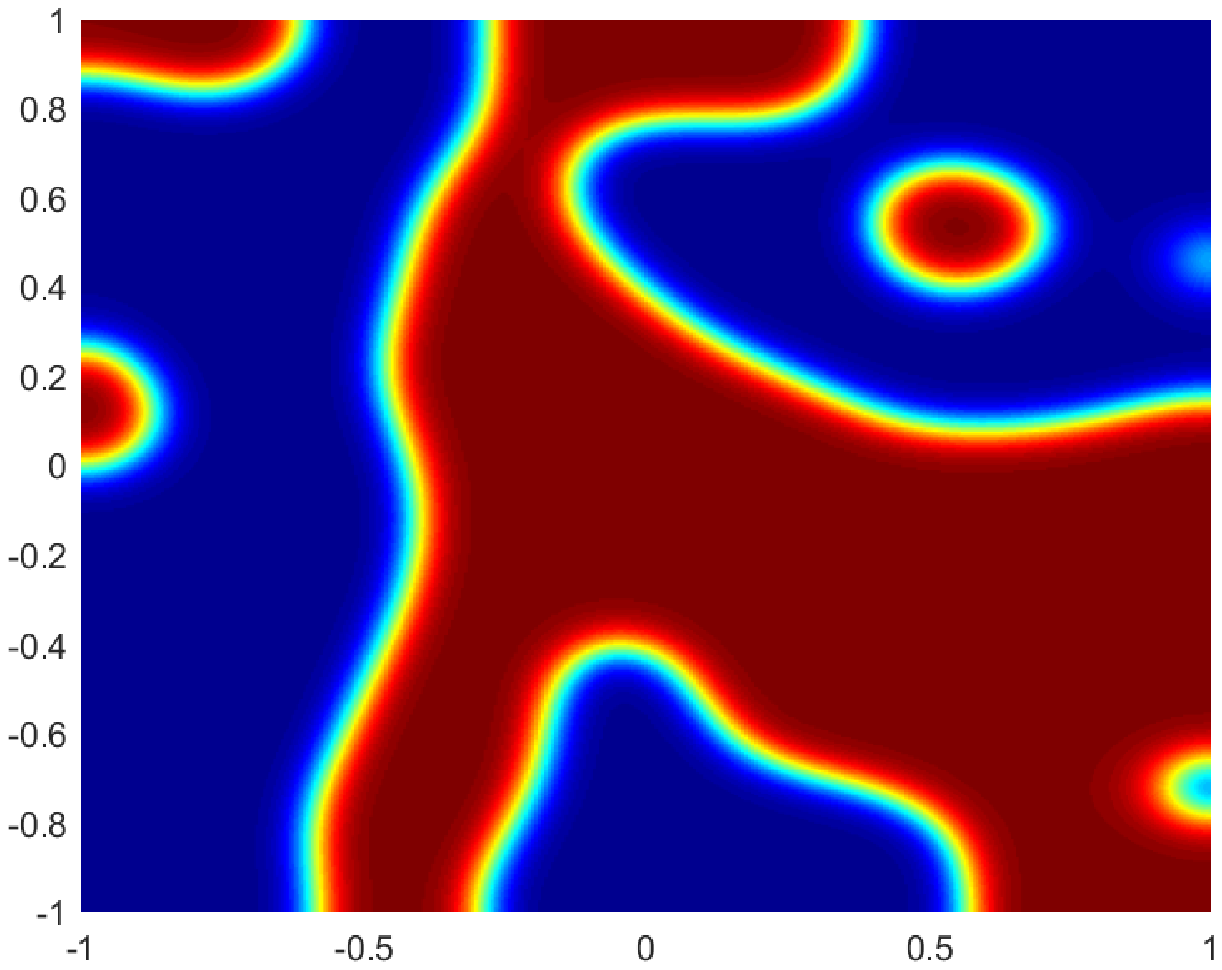}
\end{minipage}
&\begin{minipage}[t]{0.2\linewidth}
\includegraphics[scale=0.22]{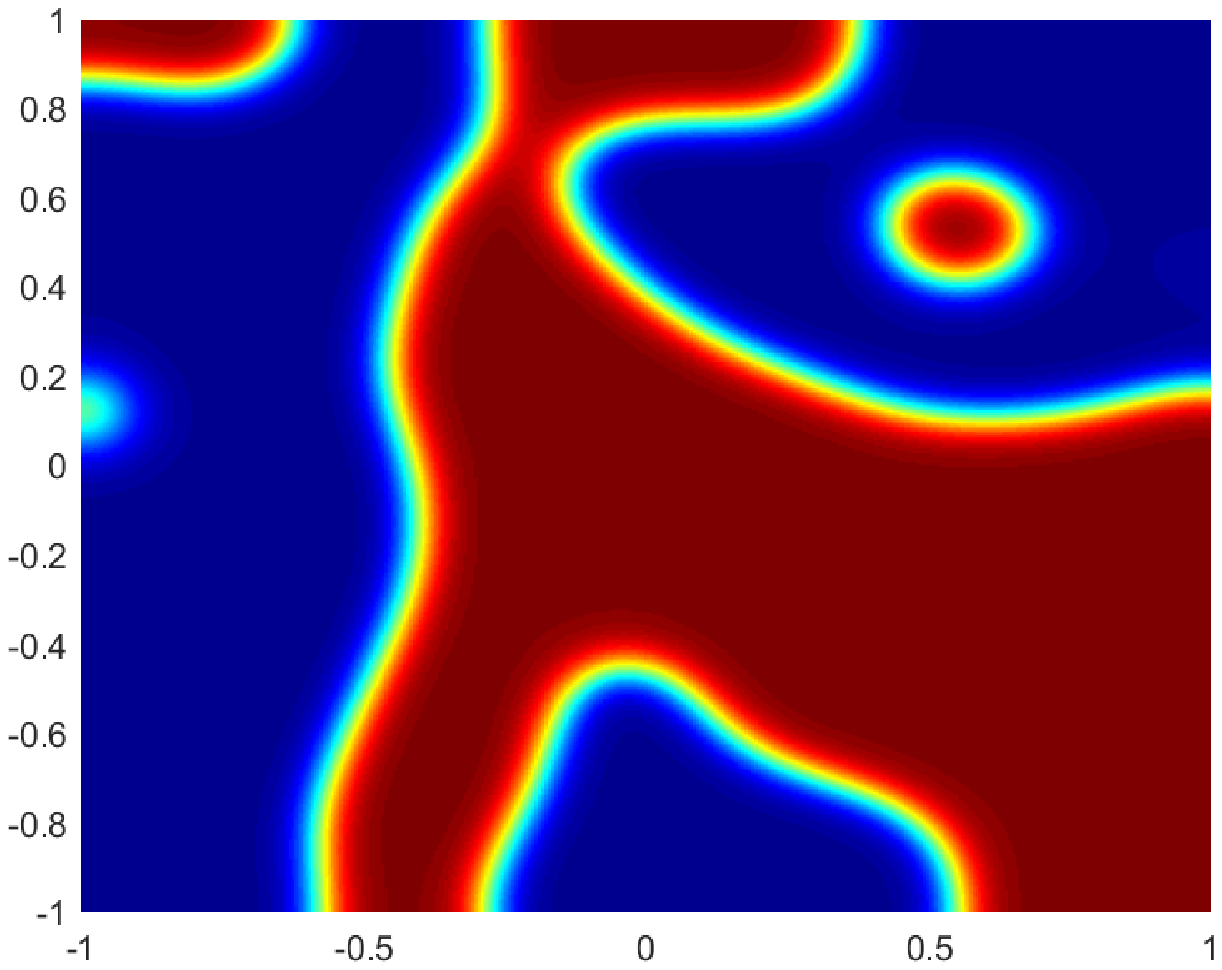}
\end{minipage}\\
\specialrule{0em}{-15mm}{.001pt}
\tabincell{c}{$t=50$\\ \\ \\ \\ \\ \\ \\}
&\begin{minipage}[t]{0.2\linewidth}
\includegraphics[scale=0.22]{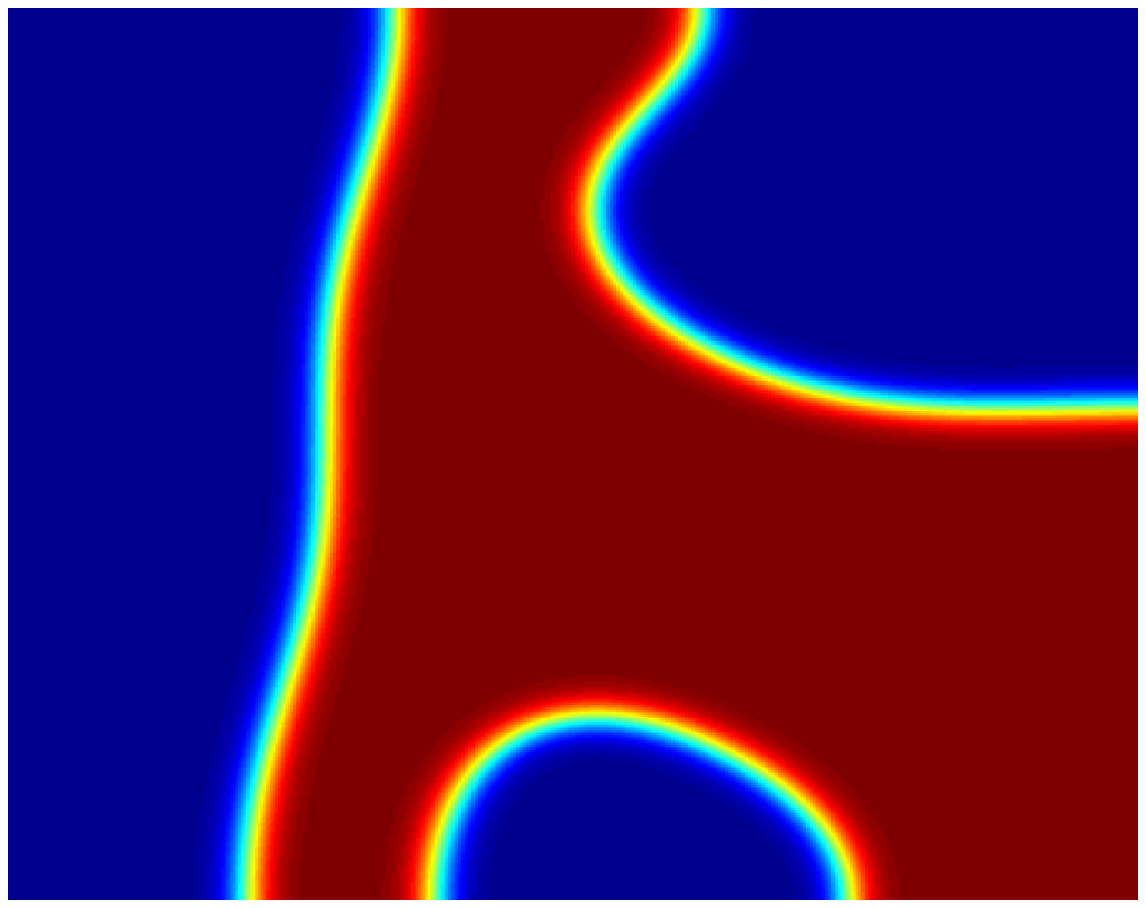}
\end{minipage}&
\begin{minipage}[t]{0.2\linewidth}
\includegraphics[scale=0.22]{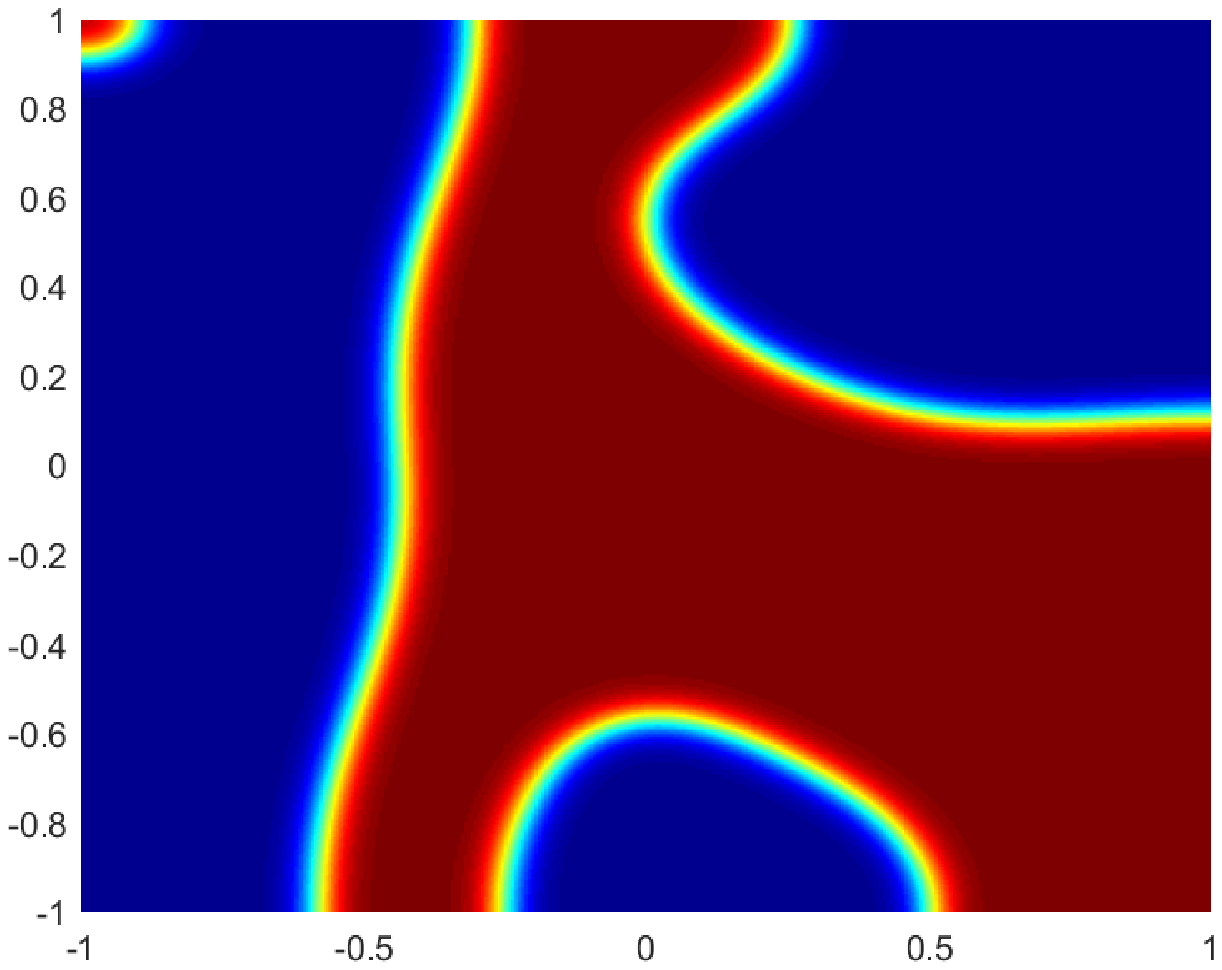}
\end{minipage}&
\begin{minipage}[t]{0.2\linewidth}
\includegraphics[scale=0.22]{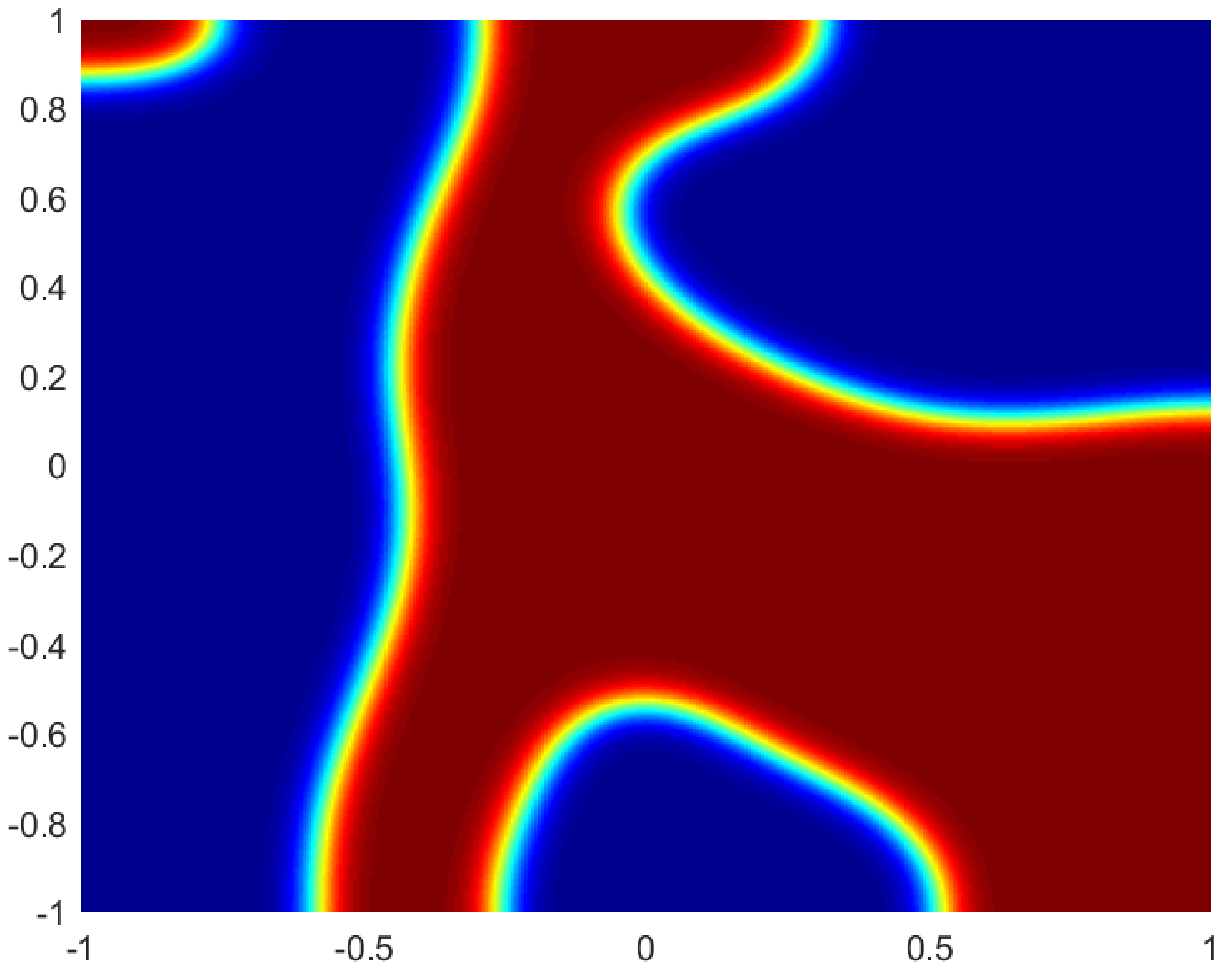}
\end{minipage}
&\begin{minipage}[t]{0.2\linewidth}
\includegraphics[scale=0.22]{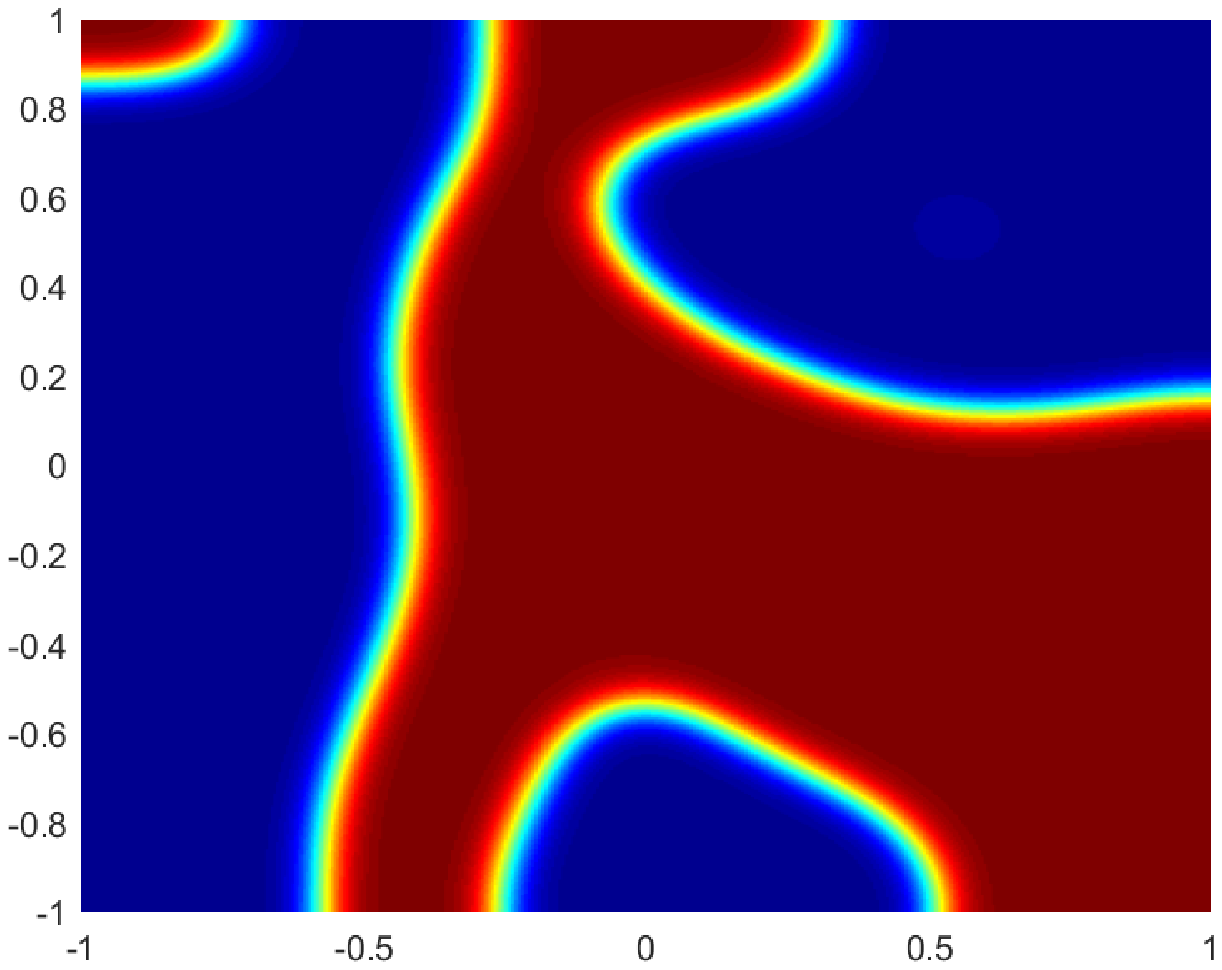}
\end{minipage}\\
\specialrule{0em}{-15mm}{.001pt}
\tabincell{c}{$t=100$\\ \\ \\ \\ \\ \\ \\}
&\begin{minipage}[t]{0.2\linewidth}
\includegraphics[scale=0.22]{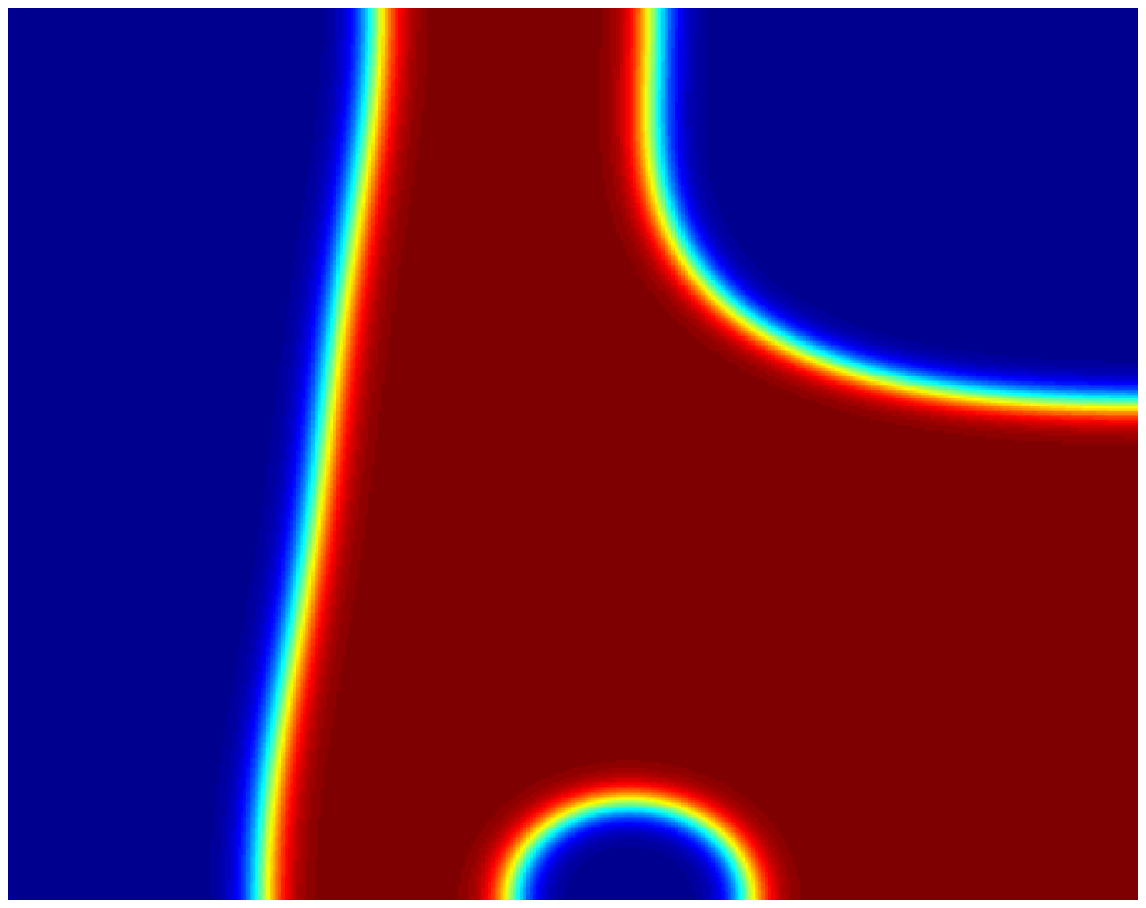}
\end{minipage}&
\begin{minipage}[t]{0.2\linewidth}
\includegraphics[scale=0.22]{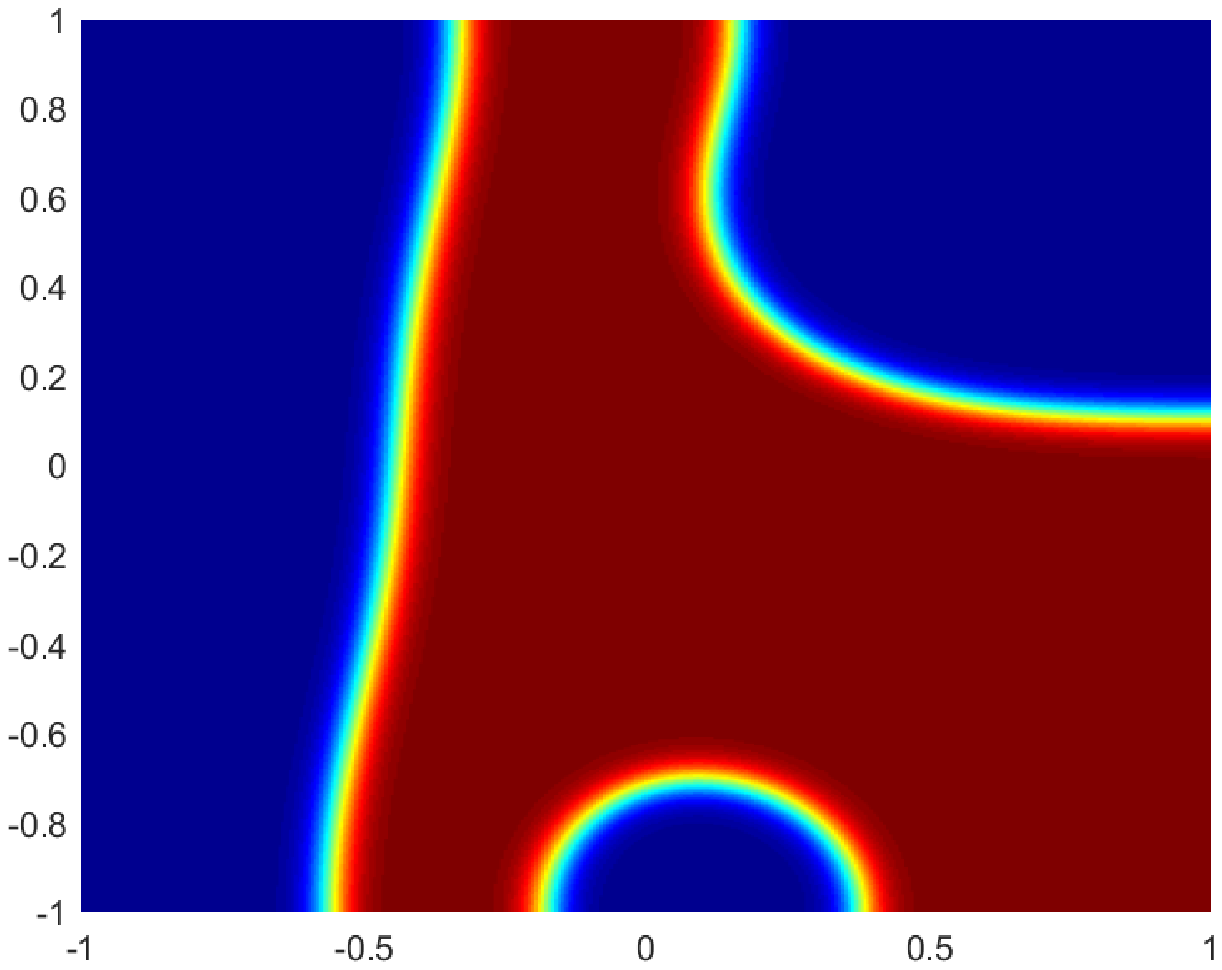}
\end{minipage}&
\begin{minipage}[t]{0.2\linewidth}
\includegraphics[scale=0.22]{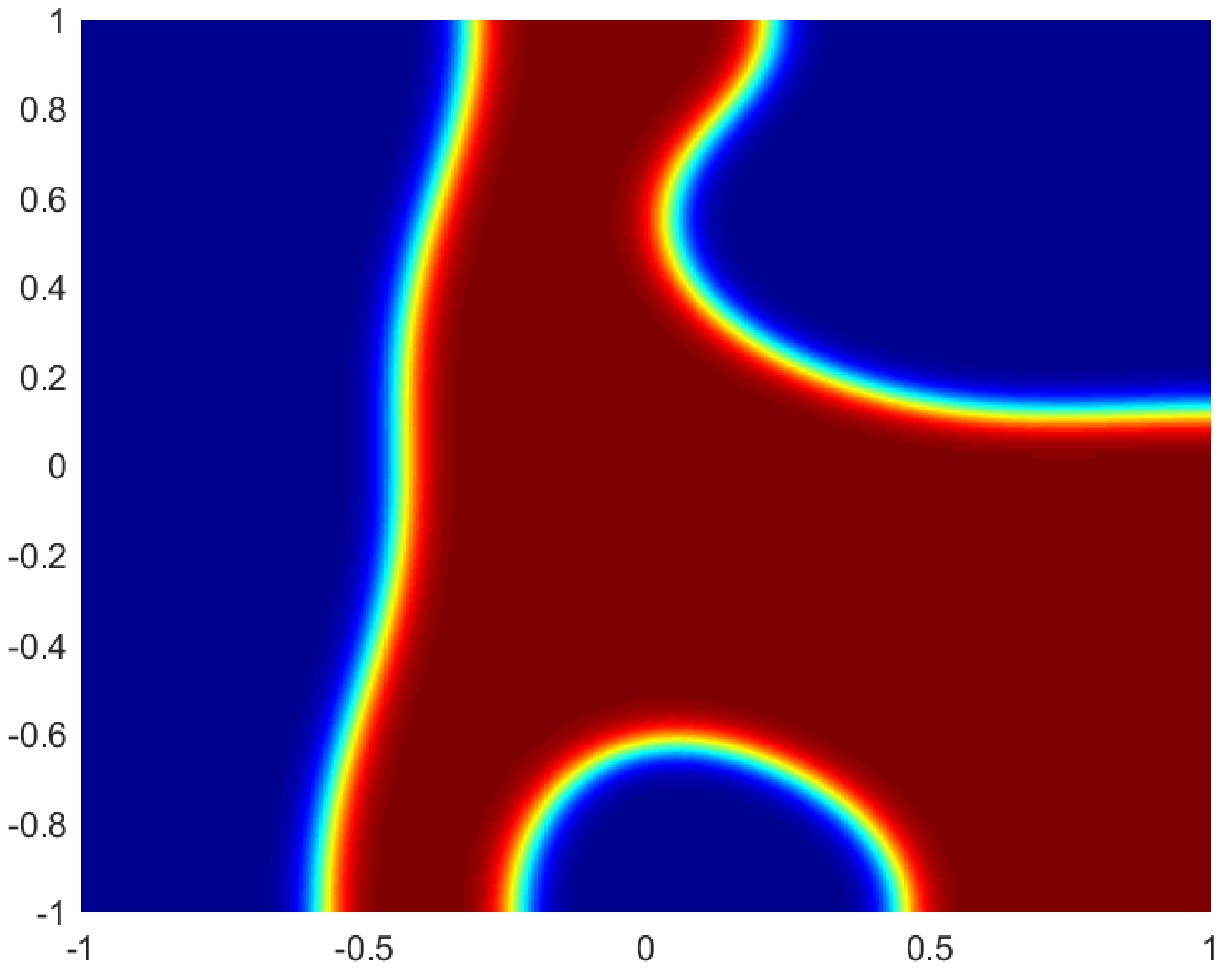}
\end{minipage}
&\begin{minipage}[t]{0.2\linewidth}
\includegraphics[scale=0.22]{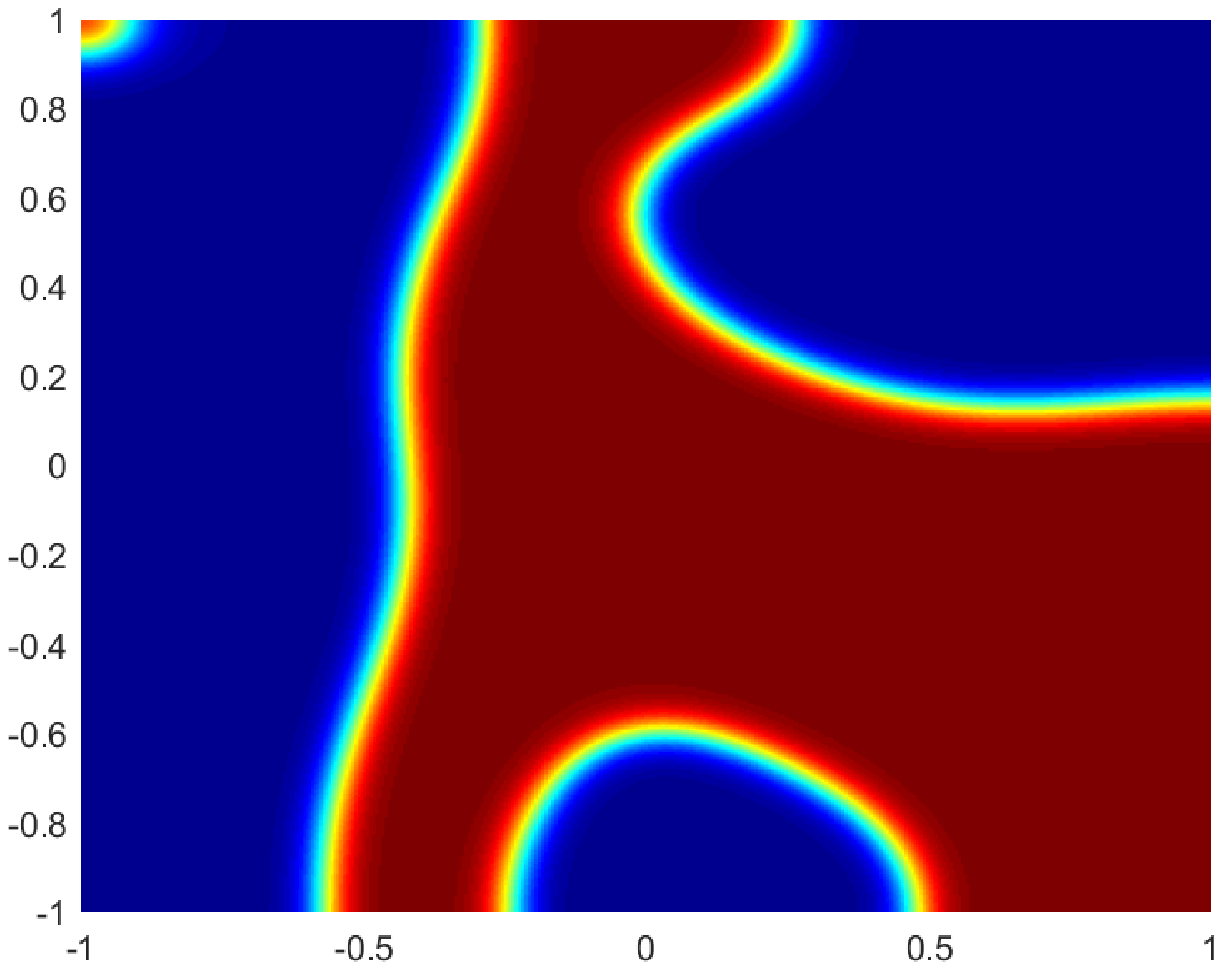}
\end{minipage}\\
\specialrule{0em}{-15mm}{.001pt}
\tabincell{c}{{\small Energy}\\ \\ \\ \\ \\ \\ \\}
&\begin{minipage}[t]{0.2\linewidth}
\includegraphics[scale=0.22]{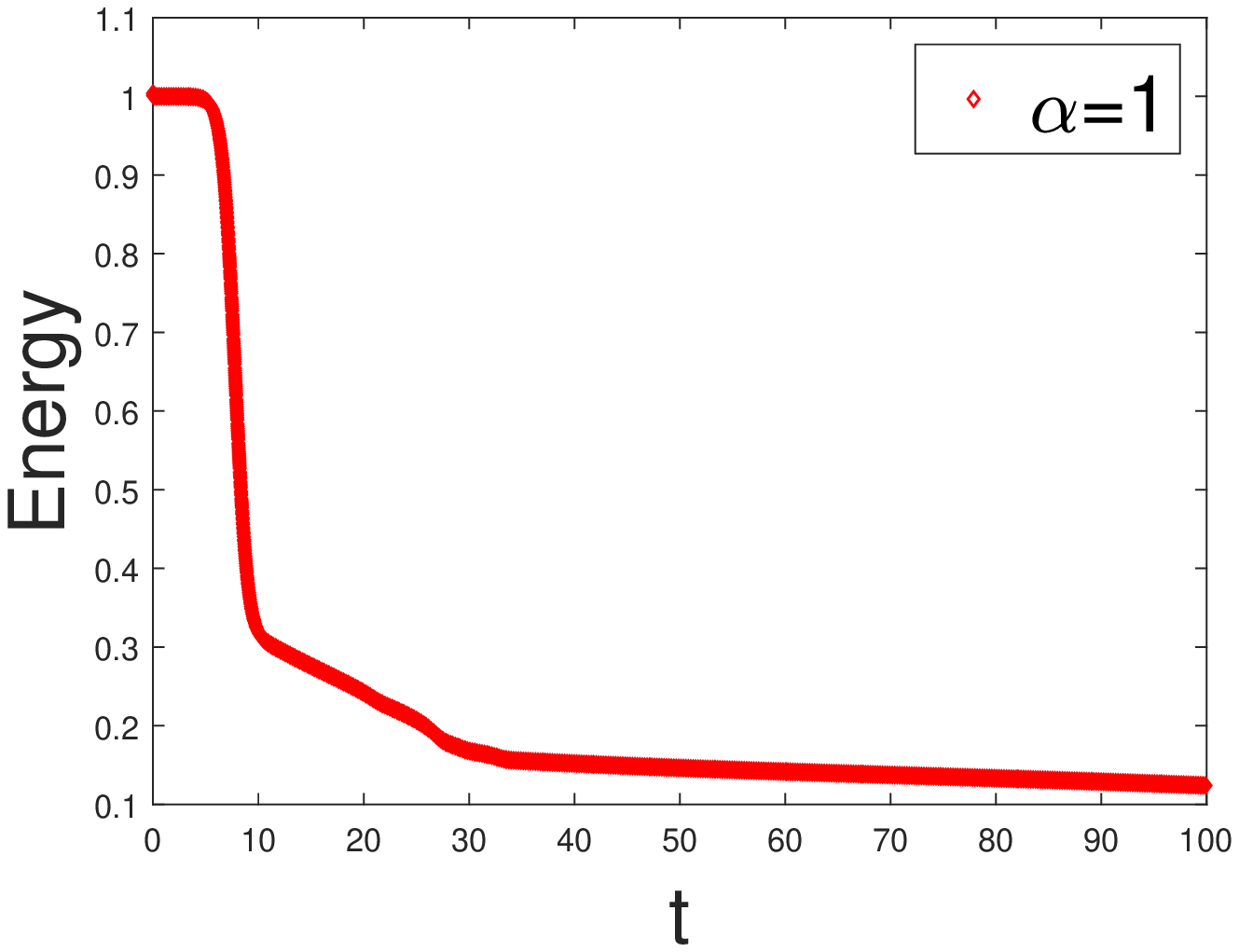}
\end{minipage}&
\begin{minipage}[t]{0.2\linewidth}
\includegraphics[scale=0.22]{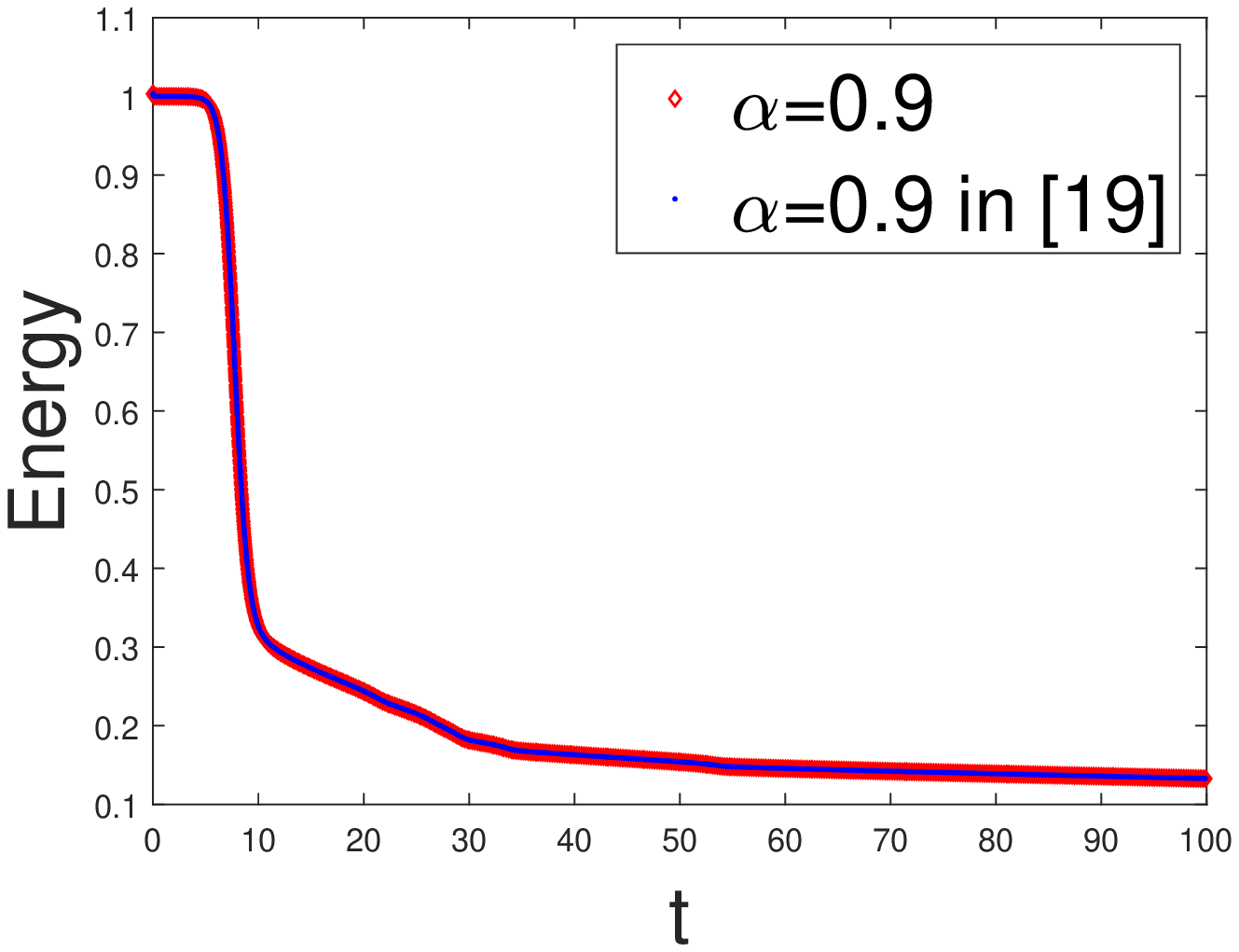}
\end{minipage}&
\begin{minipage}[t]{0.2\linewidth}
\includegraphics[scale=0.22]{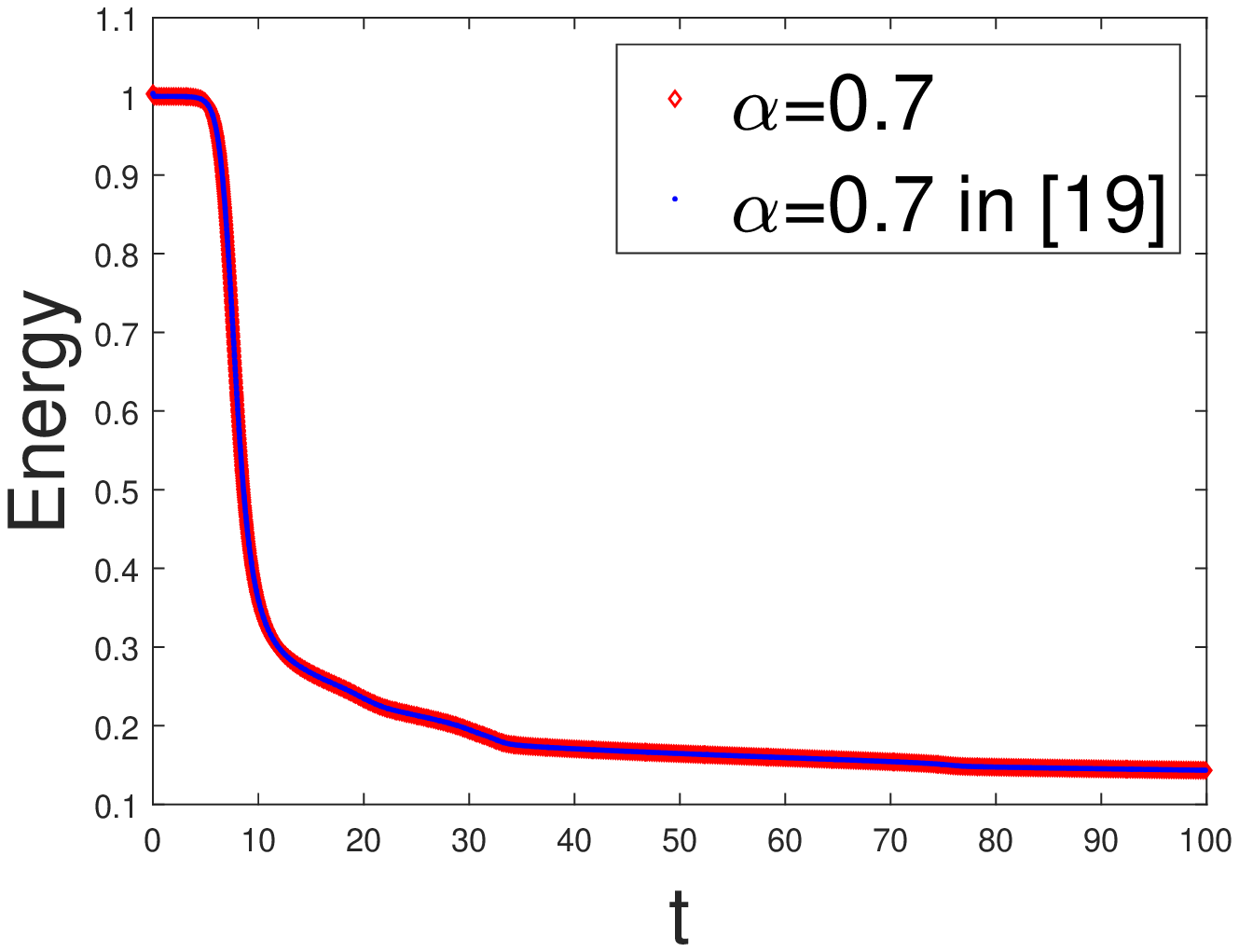}
\end{minipage}
&\begin{minipage}[t]{0.2\linewidth}
\includegraphics[scale=0.22]{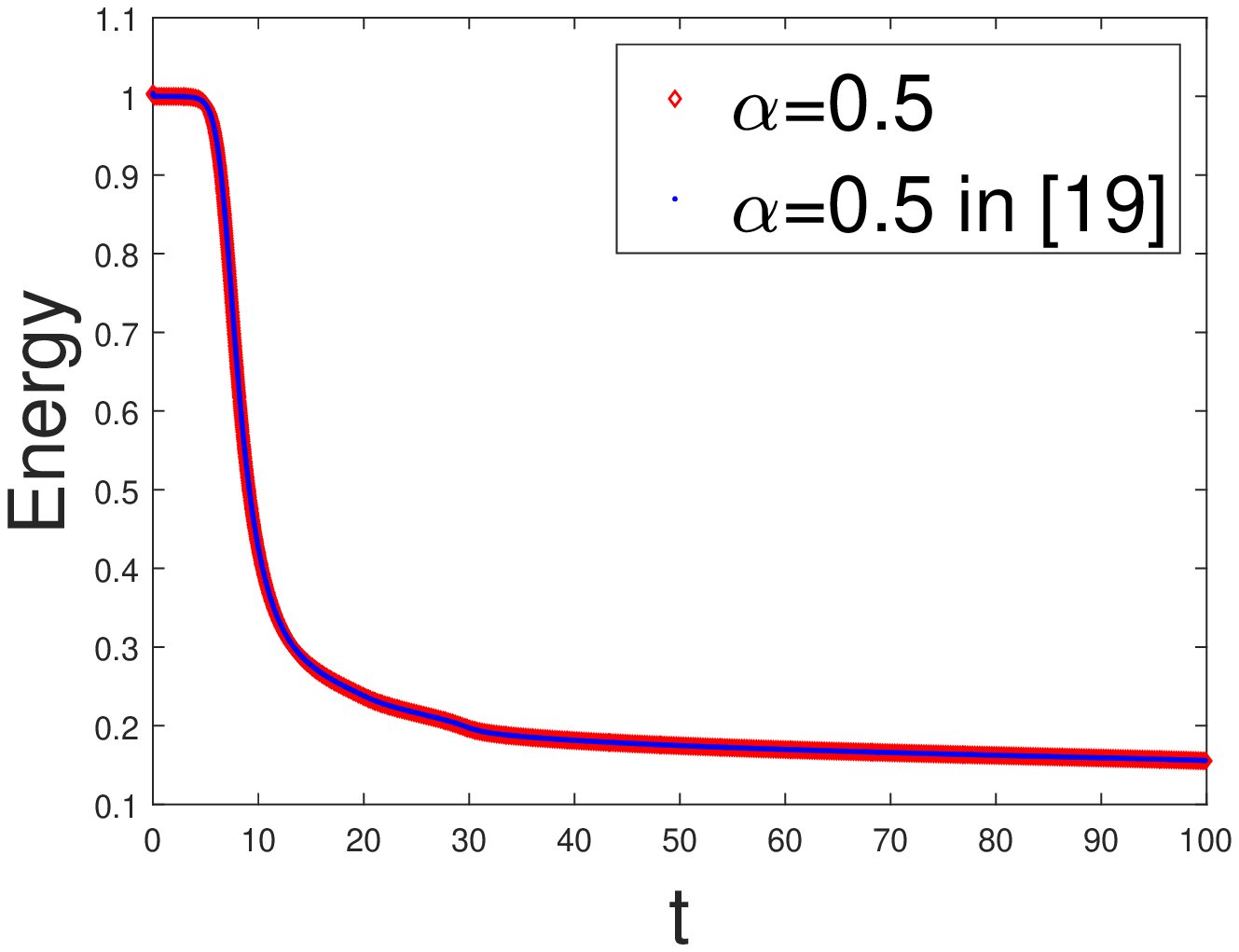}
\end{minipage}\\
\specialrule{0em}{-15mm}{.001pt}
\end{tabular}
  \end{center}
\caption{Snapshots of the simulated phase field evolution starting with a random initial data
for $\alpha=1, 0.9, 0.7, 0.5$ (the first five rows); Comparisons of the computed energy
between the L$1^{+}$-CN scheme and the scheme in \cite{HZX20}.
}\label{fig7}
\end{figure*}

\section {Concluding remarks}
We have proposed a class of energy dissipative schemes for the time fractional
Allen-Cahn equation. The construction of the schemes made use of a new idea to reformulate
the original equation. By splitting the time fractional derivative into
a local part and a history part, and adding the history part to the new defined
energy, a dissipation law for the new energy can be established in any given time grid.
Based on this splitting and an auxiliary variable approach, several schemes of different convergence
orders were constructed by combining the L1 and L$1^{+}$ discretizations to the time fractional derivative and Crank-Nicolson formula to other necessary terms.
The main property of the
proposed schemes is its unconditional stability for general meshes.
The proved stability of the schemes built on the graded mesh is of particular interesting
because this type of mesh has been found very useful to treat with the starting point singularity
of the time fractional differential equations. {\color{black}
Moreover the splitting-based approach allows use of the sum-of-exponentials techinique
to fast evaluate the history part of the fractional derivative
without affecting the stability property of the schemes.}
The efficiency of the proposed method was verified by a series of numerical experiments.
the authors
proved that the fractional derivative of the traditional free energy is always nonpositive.

{\color{black} It is notable that there exist some similar dissipation laws for memorized or mean energy, and
different energy laws lead to SAV-based schemes having quite different stability properties.
It seems to us that the energy dissipation law established in the current paper
facilitates construction of high order stable schemes.}
It is also worth to mention that
the idea of the present work is most likely extendable
to some other gradient flows,
such as the Cahn-Hilliard equation and molecular beam epitaxial growth models.

\bibliographystyle{plain}
\bibliography{ref}
\end{document}